\newtheorem{theorem}{Theorem}
\theoremstyle{plain}
\newtheorem{claim}{Claim}
\newtheorem{corollary}{Corollary}
\newtheorem{definition}{Definition}
\newtheorem{example}{Example}
\newtheorem{lemma}{Lemma}
\newtheorem{proposition}{Proposition}
\newtheorem{remark}{Remark}
\numberwithin{equation}{section}
\begin{document}
\title[Algorithms to test OSC for self-similar set]{Algorithms to test open
set condition for self-similar set related to P.V. numbers}
\author{Hao Li}
\address{Institute of Mathematics, Zhejiang Wanli University, Ningbo,
Zhejiang, 315100, P. R. China}
\email{kevinlee9809@hotmail.com}
\author{Qiuli Guo}
\address{Institute of Mathematics, Zhejiang Wanli University, Ningbo,
Zhejiang, 315100, P. R. China}
\email{guoqiuli@zwu.edu.cn}
\author{Qin Wang$^\ast$}
\address{School of Computer Science and Information Technology, Zhejiang
Wanli University, Ningbo, Zhejiang, 315100, P. R. China}
\email{qinwang@126.com}
\author{Li-Feng Xi}
\address{Institute of Mathematics, Zhejiang Wanli University, Ningbo,
Zhejiang, 315100, P.~R. China}
\email{xilifengningbo@yahoo.com}
\subjclass[2000]{28A80}
\keywords{fractal, self-similar set, algorithm, open set condition, P.V.
number}
\thanks{$\ast $ Corresponding author. This work is supported by National
Natural Science of Foundation of China (Nos. 11371329, 11071224), NCET and
NSF of Zhejiang Province (Nos. LR13A1010001, LY12F02011, Q14A010014)}

\begin{abstract}
Fix a P.V. number $\lambda ^{-1}>1.$ Given $\mathbf{p}=(p_{1},\cdots
,p_{m})\in \mathbb{N}^{m}$, $\mathbf{b}=(b_{1},\cdots ,b_{m})\in \mathbb{Q}%
^{m}$, for the self-similar set $E_{\mathbf{p},\mathbf{b}}=\cup
_{i=1}^{m}(\lambda ^{p_{i}}E_{\mathbf{p},\mathbf{b}}+b_{i})$ we find an
efficient algorithm to test whether $E_{\mathbf{p},\mathbf{b}}$ satisfies
the open set condition (strong separation condition) or not.
\end{abstract}

\maketitle


\section{Introduction}

Suppose $K=\cup _{i=1}^{m}f_{i}(K)\subset \mathbb{R}^{d}$ is a self-similar
set where $\{f_{i}\}_{i=1}^{m}$ are contracting similitudes, i.e., there are
$r_{i}\in (0,1)$ such that $|f_{i}(x)-f_{i}(y)|=r_{i}|x-y|\text{ for all }%
x,y\in \mathbb{R}^{d}.$
If $\sum_{i=1}^{m}(r_i)^{s}=1,$ we call $s$ the similarity dimension of $K$ and denote it by $\dim_S K$.
We say that the open set condition (\textbf{OSC}) is
fulfilled for $\{f_{i}\}_{i},$ if there is a non-empty open set $U\,$such
that $\cup _{i=1}^{m}f_{i}(U)\subset U\,$and $f_{i}(U)\cap
f_{j}(U)=\varnothing $ for all $i\neq j.$ The strong separation condition (%
\textbf{SSC}) is satisfied, if $f_{i}(K)\cap f_{j}(K)=\varnothing $ for all $%
i\neq j.$ Let $\Sigma ^{\ast }=\cup _{k=1}^{\infty }\{1,\cdots ,m\}^{k},$ $%
f_{\mathbf{i}}=f_{i_1}\circ f_{i_2}\circ\cdots\circ f_{i_k}$ with $\mathbf{i}%
=i_1i_2\cdots i_k\in\{1,\cdots ,m\}^{k}$.

\begin{remark}
The open set condition was introduced by Moran \cite{Moran} and studied by
Hutchinson \cite{Hu}. Schief \cite{S}, Bandt and Graf \cite{B} showed the
relation between the open set condition and the positive Hausdorff measure.
\end{remark}

Self-similar sets with overlaps have very complicated\emph{\ }structures.
Falconer \cite{Falc088} proved some \textquotedblleft
generic\textquotedblright\ result on Hausdorff dimension of self-similar
sets without the assumption about the open set condition. One useful notion
\textquotedblleft transversality\textquotedblright\ to study self-similar
sets (or measures) with overlaps can be found e.g. in Keane, Smorodinsky and
Solomyak \cite{Keane}, Pollicott and Simon \cite{PS}, Simon and Solomyak
\cite{Simon} and Solomyak \cite{So}. For self-similar $\lambda$-Cantor set
\begin{equation*}
E_{\lambda }=E_{\lambda }/3\cup (E_{\lambda }/3+\lambda /3)\cup (E_{\lambda
}/3+2/3),
\end{equation*}
a conjecture of Furstenberg says that $\dim _{\text{H}}E_{\lambda }=1$ for
any $\lambda $ irrational. Recently, Hochman \cite{Ho} solved the
Furstenberg conjecture, also see \cite{SV}.

Kenyon \cite{K} obtained that the \textbf{OSC} is fulfilled for $E_{\lambda
} $ if and only if $\lambda =p/q\in \mathbb{Q}$ with $p\equiv q\not\equiv 0($%
mod$3).$ Rao and Wen \cite{RW} also discussed the structure of $E_{\lambda }$
with $\lambda \in \mathbb{Q}$ using the key idea \textquotedblleft
graph-directed struture\textquotedblright\ introduced by Mauldin and
Williams \cite{MW}. In particular, Rao and Wen \cite{RW} studied the
self-similar sets with \textquotedblleft complete overlaps\textquotedblright
.

In general, when considering the $\{f_{i}(x)=x/n+b_{i}\}_{i=1}^{N}$ with $%
b_{i}\in \mathbb{Q},n\in \mathbb{N}$ and $n\geq 2,$ as mentioned in \cite{Z}%
, we don't have a general classification of those parameters $%
(n,b_{1},\cdots ,b_{N})$ for which the \textbf{OSC} is satisfied. An
interesting observation is that the weak separation property (\textbf{WSP})
fulfills for $\{f_{i}(x)\}_{i=1}^{N}.$

\begin{definition}
We say that the \textbf{WSP} fulfills iff the identity $\mathbf{id}$ is not
an accumulation point of $\{f_{\mathbf{i}}^{-1}f_{\mathbf{j}}:$ $\mathbf{i}%
\neq \mathbf{j\in }$ $\Sigma ^{\ast }\}$.
\end{definition}

Lau and Ngai \cite{L} introduced the \textbf{WSP} and Zerner \cite{Z}
developed their theory. Feng and Lau \cite{FL} studied the multifractal
formalism for self-similar measures with the \textbf{WSP}. It is clear that $%
\mathbf{SSC}\Longrightarrow \mathbf{OSC}\Longrightarrow \mathbf{WSP}.$

Ngai and Wang \cite{NW} introduced the finite type condition (\textbf{FTC})
and described a scheme for computing the exact Hausdorff dimension of
self-similar set in the absence of the \textbf{OSC}. Nguyen \cite{N} showed
that the \textbf{FTC} implies the \textbf{WSP}. Lau and Ngai \cite{LN2}
introduced a generalized finite type condition (\textbf{GFTC}) which
extended a more restrictive condition in \cite{NW} and proved that the
generalized finite type condition implies the \textbf{WSP}. Lau, Ngai and
Wang \cite{LNW} also extended both \textbf{WSP} and \textbf{FTC} to include
finite iterated function systems (\textbf{IFSs}) of injective $C^1$
conformal contractions on compact subsets of $\mathbb{R}^{d}$ . Then we have

\begin{center}
\textbf{FTC\ }$\Longrightarrow$\textbf{\ GFTC }$\Longrightarrow$ \textbf{WSP}%
.
\end{center}
Das and Edgar \cite{DE} proved that the \textbf{GFTC} implies geometric \textbf{%
WSP} for graphs.

Lau and Ngai \cite{L} revealed the connection between the \textbf{WSP}\ and
\textbf{P.V. number}.

\begin{definition}
A Pisot-Vijayaraghavan number, also called simply a P.V. number, is a real
algebraic integer greater than 1 such that all its Galois conjugates are
less than 1 in absolute value.
\end{definition}

The P.V. numbers were discovered by Thue in 1912 and rediscovered by Hardy
in 1919 within the context of Diophantine approximation. They became widely
known after the publication of Charles Pisot's dissertation in 1938. Some
elementary properties of P.V. number will offer help to us:

\begin{itemize}
\item Every positive integer except 1 is a P.V. number;

\item For example, $\frac{\sqrt{5}+1}{2}$ and $\sqrt{2}+1$ are P.V. numbers;

\item Garsia's theorem \cite{G}: Suppose $\eta $ is a P.V. number and $\eta
_{1},\cdots ,\eta _{l}$ are algebraic conjugates of $\eta \ $with $|\eta
_{i}|<1$ for all $i.$ If $L(x)=a_{0}+a_{1}x+\cdots +a_{k}x^{k}$ with integer
coefficients, then
\begin{equation}
L(\eta )=0 \text{\ or\ } L(\eta )\geq \lbrack
\max_{i}|a_{i}|]^{-l}\prod\nolimits_{i=1}^{l}(1-|\eta _{i}|);  \label{garsia}
\end{equation}

\item Given a P.V. number $\lambda ^{-1}>1,$ an interesting fact is that the
IFS $\{\lambda ^{p_{i}}x+b_{i}\}_{i=1}^{m}$ with $p_{i}\in \mathbb{N}$ and $%
b_{i}\in \mathbb{Q}$ for all $i$, by Theorem 2.9 in \cite{NW}, is of \textbf{%
finite type}, and then \textbf{WSP} is fulfilled \cite{L}.
\end{itemize}

\subsection{Decidability on fractals}

$\ $\

It is well known (e.g. see \cite{Si}) that the halting problem for Turing
machines, Wang's tiling problem, Hilbert's tenth problem and group
isomorphism problem have no algorithms, i.e., these problems are
undecidable. Here a \emph{decidable }problem is a question such that there
is an algorithm or computer program to answer \textquotedblleft yes" or
\textquotedblleft no" to the question for every possible input.

There are some related results as follows.

(1) Based on the undecidability of Post correspondence problem (PCP), Dube
\cite{D1,D2} discussed the undecidability of the problem on the invariant
fractals of iterated function systems. For example, Theorem 5 of \cite{D1}
actually shows that given a self-affine set in the plane, it is \emph{%
undecidable} to test if it satisfies the \textbf{SSC}.

(2) With the \textbf{WSP} of similarities $\{S_{i}\}_{i=1}^{n},$ Lau, Ngai
and Rao \cite{LNR} proved that the following problem is \emph{decidable} to
test if the self-similar measure $\mu =\sum\nolimits_{i=1}^{m}\rho _{i}\mu
\circ S_{i}^{-1}$ is absolutely continuous or not. In fact, they obtained
the criterion based on the transition matrix.

An algorithm is said to be of \emph{polynomial time} if its running time is upper
bounded by a polynomial expression in the size of the input for the
algorithm, i.e., $T(n)$ $=O(n^{k})$ for some constant $k$.

Let $G=(\mathcal{V},\mathcal{E})$ be a directed graph with vertex set $%
\mathcal{V}$ and directed-edge set~$\mathcal{E}$. Recall some polynomial
time algorithms as follows:

(a) Dijkstra's algorithm \cite{D} finds the lowest cost path for a graph
with nonnegative edge path costs, Dijkstra's original algorithm does not use
a min-priority queue and runs in $O(|\mathcal{V}|^{2});$

(b) Kosaraju's algorithm \cite{C}\ is an algorithm with its running time $O(|%
\mathcal{V}|^{2})$ to find the strongly connected components of a directed
graph;

(c) Kruskal's algorithm \cite{Kr}\ finds a minimum spanning tree for a
connected weighted graph, this algorithm requires $O(|\mathcal{E}|\log |%
\mathcal{E}|)$ time$.$

\subsection{Main result}

\

In this paper, fix a P.V. number $\lambda ^{-1}>1,$ we consider the IFS%
\begin{equation*}
S_i(x)=\lambda ^{p_{i}}x+b_{i}\text{ for }i=1,\cdots m,
\end{equation*}%
where $\mathbf{p}=(p_{1},\cdots ,p_{m})\in \mathbb{N}^{m}$, $\mathbf{b}%
=(b_{1},\cdots ,b_{m})\in \mathbb{Q}^{m}.$ Let
\begin{equation}  \label{Epb}
E_{\mathbf{p},\mathbf{b}}=\cup _{i=1}^{m}(\lambda ^{p_{i}}E_{\mathbf{p},%
\mathbf{b}}+b_{i})\subset \mathbb{R}^{1}
\end{equation}%
be the self-similar set w.r.t. the IFS $\{\lambda
^{p_{i}}x+b_{i}\}_{i=1}^{m}.$

Although there is no general classification of parameters $(\mathbf{p},%
\mathbf{b})$ for which the \textbf{OSC} is satisfied, in this paper, we
obtain an algorithm to test whether \textbf{OSC} is satisfied or not.

\begin{theorem}
\label{Th:Main}Fix a P.V. number $\lambda ^{-1}>1.$ Given $\mathbf{p}%
=(p_{1},\cdots ,p_{m})\in \mathbb{N}^{m}$, $\mathbf{b}=(b_{1},\cdots
,b_{m})\in \mathbb{Q}^{m}$, the following problems on the self-similar set $$%
E_{\mathbf{p},\mathbf{b}}=\cup _{i=1}^{m}(\lambda ^{p_{i}}E_{\mathbf{p},%
\mathbf{b}}+b_{i})$$ are decidable$:$ \newline
$(1)$ whether $E_{\mathbf{p},\mathbf{b}}$ satisfies the open set condition$;$
\newline
$(2)$ whether $E_{\mathbf{p},\mathbf{b}}$ satisfies the strong separation
condition.
\end{theorem}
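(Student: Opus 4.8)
The plan is to convert both separation properties into questions about a finite directed graph whose vertices encode the relevant geometric configurations, and then apply the polynomial-time graph algorithms recalled in the introduction (strongly connected components, shortest paths). The starting point is that, since $\lambda^{-1}$ is a P.V.\ number, the IFS $\{\lambda^{p_i}x+b_i\}_{i=1}^m$ is of finite type and hence satisfies the \textbf{WSP}. This is exactly what gives finiteness: the set of ``neighbour maps'' $S_{\mathbf i}^{-1}S_{\mathbf j}$ that can occur between cylinders of comparable size, restricted to those that actually bring the two cylinders close together, is finite, and Garsia's inequality \eqref{garsia} furnishes an explicit separation constant so that this finite set can be effectively enumerated from the data $(\mathbf p,\mathbf b,\lambda)$. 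Concretely, I would fix the common denominator $q$ of the $b_i$ and work in $\mathbb{Z}[\lambda]$-coordinates scaled by $q$; a pair of cylinders $S_{\mathbf i}(E)$, $S_{\mathbf j}(E)$ with $|\mathbf p\cdot(\text{counts in }\mathbf i)-\mathbf p\cdot(\text{counts in }\mathbf j)|$ bounded and with overlapping convex hulls has relative position $S_{\mathbf i}^{-1}S_{\mathbf j}$ lying in a set that Garsia's bound shows is finite, and each such position is a polynomial in $\lambda$ with controlled integer coefficients, hence recognizable.

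Next I would build the \emph{separation graph} $G$: vertices are the finitely many admissible relative configurations of pairs of equal-generation (or balanced-generation) cylinders that have nonempty intersection of convex hulls, together with a root vertex for the pair $(\emptyset,\emptyset)$; there is an edge from one configuration to another if the second arises from the first by appending one more map $S_k$ to each cylinder (adjusting generations so the two cylinders stay balanced, which is where the weights $p_i$ enter). This is the standard ``finite type'' automaton of Ngai--Wang adapted to unequal contraction exponents. Two cylinders $S_{\mathbf i}(E)$ and $S_{\mathbf j}(E)$ with $\mathbf i\neq\mathbf j$ overlap (i.e.\ intersect) iff there is a directed path in $G$ from the root to a vertex whose configuration map $S_{\mathbf i}^{-1}S_{\mathbf j}$ satisfies $S_{\mathbf i}^{-1}S_{\mathbf j}(E)\cap E\neq\varnothing$; the latter is checkable because $E$ is contained in an explicit bounded interval and the configuration maps form a finite effectively-given list. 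From this graph one reads off the two properties. For the \textbf{SSC}: $E_{\mathbf p,\mathbf b}$ fails the \textbf{SSC} iff some reachable vertex $\neq$ root has configuration map $g$ with $g(E)\cap E\neq\varnothing$, i.e.\ two distinct cylinders of the same generation actually meet; this is a reachability check, hence decidable in polynomial time via a search of $G$. For the \textbf{OSC}: using Schief's theorem (OSC is equivalent, in $\mathbb{R}^1$, to $\dim_{\mathrm H}E=\dim_S E$, equivalently to positivity of the appropriate Hausdorff measure), and the finite-type structure, the OSC holds iff no ``exact overlap'' or, more precisely, iff the configuration graph contains no cycle through a vertex whose map is a nontrivial identity-like overlap — equivalently, iff a suitable subgraph (the one generated by genuinely overlapping, as opposed to merely touching, configurations) has no nontrivial strongly connected component reachable from the root that forces measure collapse. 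One then invokes Kosaraju's algorithm to find the strongly connected components and test the relevant cycle condition.

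The main obstacle I anticipate is the bookkeeping that makes the vertex set genuinely \emph{finite and effectively computable}, together with turning the OSC (as opposed to the SSC) into a purely combinatorial condition on $G$. The SSC half is comparatively clean — it is a finite reachability question about touching cylinders once the finite automaton is in hand. The OSC half is subtler: one must separate ``harmless'' touching of cylinder closures (which is compatible with the OSC) from overlaps that carry positive measure and therefore break it, and then show that this distinction is exactly detected by a cycle/strong-component condition in a canonically associated graph. Here the key analytic inputs are Garsia's lemma \eqref{garsia}, which gives the quantitative gap needed to prune the graph to finite size and to decide each $g(E)\cap E\neq\varnothing$ query, and the Schief/Bandt--Graf equivalence between the OSC and positivity of Hausdorff measure, which licenses translating a measure-theoretic statement into the combinatorics of $G$. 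Once these are set up, the decidability (indeed polynomial-time decidability) of both (1) and (2) follows from the standard graph algorithms cited above applied to $G$.
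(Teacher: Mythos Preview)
Your high-level strategy---build a finite ``neighbour-map'' graph using Garsia's gap and the \textbf{WSP}, then reduce both questions to graph reachability---is exactly the paper's strategy. But both of your halting criteria, as written, have gaps.

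For the \textbf{SSC} you say: fail iff some reachable vertex has configuration map $g$ with $g(E)\cap E\neq\varnothing$, and claim this is checkable because $E$ lies in an explicit interval. That is circular: knowing a bounding interval for $E$ does not let you decide whether the \emph{fractal} $E$ meets $g(E)$; two cylinders can have overlapping convex hulls at one level yet be disjoint as subsets of $E$. The paper's fix is to observe that $S_i(E)\cap S_j(E)\neq\varnothing$ (for $i\neq j$) iff one can keep refining both cylinders so that their convex hulls continue to intersect at \emph{every} level---i.e.\ iff there is an \emph{infinite} path in the graph starting from the initial pair. By pigeonhole on the finite vertex set this becomes: some reachable vertex lies on a cycle. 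That is a clean reachability-plus-cycle test; your ``check $g(E)\cap E$ directly'' step is not.

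For the \textbf{OSC} your criterion is left vague (``identity-like overlap'', ``forces measure collapse'', strongly connected components). The key lemma you are missing is Zerner's: under the \textbf{WSP}, the \textbf{OSC} holds iff $S_\sigma\neq S_\tau$ for all distinct $\sigma,\tau\in\Sigma^*$. Hence \textbf{OSC} fails iff some path in the graph reaches the vertex corresponding to the identity map $(0,0)$---a pure reachability question, no cycles or measure arguments needed. Your alternative route via Schief and the Ngai--Wang dimension computation would also establish decidability, but (as the paper notes) that scheme requires enumerating all neighbourhood types, which can be exponentially many; the exact-overlap criterion gives a direct path test and a much smaller graph.
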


\begin{remark}
\cite{WX} deals with the case that $\lambda^{-1}$ is an integer and $p_1=\cdots=p_m=1.$
\end{remark}

\begin{remark}\label{R:a}
\label{remark:int} Without loss of generality, we assume that
\begin{equation}
(b_{1},\cdots ,b_{m})\in \mathbb{Z}^{m}.  \label{key}
\end{equation}%
In fact, we can select $a\in \mathbb{Z}$ such that $ab_{i}\in \mathbb{Z}$
for all $i,$ suppose $T_{i}(x)=\lambda ^{p_{i}}x+ab_{i}.$ Then the \textbf{OSC}
$($or \textbf{SSC}$)$ holds for $\{T_{i}\}$ if and only if the \textbf{OSC} $%
($or \textbf{SSC}$)$ holds for $\{S_{i}\}.$
\end{remark}

\begin{remark}
Since $n\geq 2$ is a P.V. number, in an algorithmic point of view we answer
the problem on $(n,b_{1},\cdots ,b_{m})$ from \cite{Z}.
\end{remark}

\begin{corollary}\label{weishu}
For $E_{\mathbf{p},\mathbf{b}}$ as above, then $ \dim_H E_{\mathbf{p},\mathbf{b}}=\dim_S E_{\mathbf{p},\mathbf{b}}$
if and only if  \textbf{OSC\ } is fulfilled. Then the problem is decidable whether $\dim_H E_{%
\mathbf{p},\mathbf{b}}=\dim_S E_{\mathbf{p},\mathbf{b}}$.
\end{corollary}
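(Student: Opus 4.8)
The plan is to deduce the corollary from Theorem~\ref{Th:Main} together with the separation-versus-measure results already recalled in the introduction. The decidability assertion is the easy half: once we establish the equivalence
\begin{equation*}
\dim_H E_{\mathbf{p},\mathbf{b}}=\dim_S E_{\mathbf{p},\mathbf{b}}\iff \textbf{OSC}\text{ holds},
\end{equation*}
the algorithm of Theorem~\ref{Th:Main} that decides \textbf{OSC} decides the equality of dimensions verbatim: one runs it and reports its answer. So the entire content is the displayed equivalence, which I would prove as two separate implications. Throughout write $s=\dim_S E_{\mathbf{p},\mathbf{b}}$, the unique exponent with $\sum_{i=1}^m (\lambda^{p_i})^{s}=1$.

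First I would settle ``$\Leftarrow$''. If \textbf{OSC} holds, the classical theory of Moran \cite{Moran} and Hutchinson \cite{Hu} (equivalently Schief \cite{S}) gives $0<\mathcal{H}^{s}(E_{\mathbf{p},\mathbf{b}})<\infty$, whence $\dim_H E_{\mathbf{p},\mathbf{b}}=s=\dim_S E_{\mathbf{p},\mathbf{b}}$. I would also record the two unconditional facts needed for the converse: covering $E_{\mathbf{p},\mathbf{b}}$ by its level-$k$ cylinders and using $\sum_{|\mathbf{i}|=k}(\lambda^{p_{i_1}+\cdots+p_{i_k}})^{s}=1$ shows $\mathcal{H}^{s}(E_{\mathbf{p},\mathbf{b}})\le(\operatorname{diam}E_{\mathbf{p},\mathbf{b}})^{s}<\infty$ and $\dim_H E_{\mathbf{p},\mathbf{b}}\le s$ for every such self-similar set.

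The substance is ``$\Rightarrow$'', which I would prove by contraposition: assuming \textbf{OSC} fails I must produce a strict dimension drop $\dim_H E_{\mathbf{p},\mathbf{b}}<s$. By Schief's criterion \cite{S}, failure of \textbf{OSC} is equivalent to $\mathcal{H}^{s}(E_{\mathbf{p},\mathbf{b}})=0$; but this alone only yields $\dim_H\le s$, so the real task is to upgrade ``$\mathcal{H}^{s}=0$ at the critical exponent'' to ``$\dim_H<s$''. Here I would exploit the standing hypothesis that $\lambda^{-1}$ is a P.V. number, which by Theorem~2.9 of \cite{NW} makes the IFS of finite type and hence (Nguyen \cite{N}, Lau--Ngai \cite{L}) forces the \textbf{WSP}. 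In the finite-type regime $E_{\mathbf{p},\mathbf{b}}$ is governed by a finite neighbour automaton, and its Hausdorff dimension is the unique $t$ at which the spectral radius of the associated weighted incidence matrix $M(t)$ (entries built from the ratios $\lambda^{p_i}$) equals $1$; \textbf{OSC} corresponds precisely to the automaton carrying no nontrivial identification, in which case $M(t)$ is the ``free'' matrix and the equation reduces to $\sum_i\lambda^{p_i t}=1$, i.e. $t=s$, whereas a genuine overlap identifies distinct cylinders, shrinks the incidence matrix, and makes its spectral radius reach $1$ already at some $t<s$, forcing $\dim_H E_{\mathbf{p},\mathbf{b}}\le t<s$. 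Quantifying this strict loss of spectral radius caused by the overlap the automaton detects is the main obstacle, and it is exactly where \textbf{WSP}/finite type (not Schief alone) is indispensable, since without such structure a self-similar set could a priori carry full dimension yet satisfy $\mathcal{H}^{s}=0$. Combining the two implications yields the equivalence, and feeding it into the algorithm of Theorem~\ref{Th:Main} gives decidability of ``$\dim_H E_{\mathbf{p},\mathbf{b}}=\dim_S E_{\mathbf{p},\mathbf{b}}$'', completing the proof.
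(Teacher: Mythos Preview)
Your overall structure is right and you correctly isolate the two ingredients---Schief's theorem and the finite-type property coming from the P.V.\ hypothesis---but your contrapositive argument for ``$\Rightarrow$'' has a genuine gap that you yourself flag: the claim that a nontrivial overlap ``shrinks the incidence matrix'' and strictly lowers the spectral radius is asserted, not proved. The Ngai--Wang incidence matrix is not simply the ``free'' matrix with some rows/columns identified, and there is no monotonicity principle of the type you invoke that immediately forces a strict drop. Your parenthetical (``Quantifying this strict loss \ldots\ is the main obstacle'') is an honest admission that the argument is incomplete.

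The paper closes this gap in one line by citing a different consequence of finite type that you do not use: Theorem~1.2 of \cite{NW} gives $0<\mathcal{H}^{\dim_H E_{\mathbf{p},\mathbf{b}}}(E_{\mathbf{p},\mathbf{b}})<\infty$ for every finite-type self-similar set. Granting this, the equivalence is immediate: if $\dim_H E_{\mathbf{p},\mathbf{b}}=\dim_S E_{\mathbf{p},\mathbf{b}}=s$, then $\mathcal{H}^{s}(E_{\mathbf{p},\mathbf{b}})>0$, and Schief yields \textbf{OSC}; conversely \textbf{OSC} gives $\mathcal{H}^{s}(E_{\mathbf{p},\mathbf{b}})>0$ and hence $\dim_H=s$. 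No spectral-radius comparison is needed. So the fix is not to sharpen your matrix argument but to replace it: invoke positivity of the Hausdorff measure at $\dim_H$ (from finite type) rather than trying to push the dimension strictly below $s$ by hand.
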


\begin{proof}
Since $E_{\mathbf{p},\mathbf{b}}$ is of finite type, we have $0<\mathcal{H}%
^{\dim_H(E_{\mathbf{p},\mathbf{b}})}(E_{\mathbf{p},\mathbf{b}})<\infty$ by
Theorem 1.2 in \cite{NW}. Schief's theorem \cite{S} implies \textbf{OSC} $%
\Longleftrightarrow$ $0<\mathcal{H}^{\dim_S(E_{\mathbf{p},\mathbf{b}})}(E_{%
\mathbf{p},\mathbf{b}})<\infty$. Then
\begin{center}
\textbf{OSC\ }$\Longleftrightarrow$\ $\dim_H E_{\mathbf{p},\mathbf{b}%
}=\dim_S E_{\mathbf{p},\mathbf{b}}$
\end{center}
and the decidability follows from Theorem \ref{Th:Main}.
\end{proof}

We consider $E_{\mathbf{p},\mathbf{b}}$ with $\dim_SE_{\mathbf{p},\mathbf{b}}=1$. Then, by Schief's
theorem, we have

\begin{center}
\textbf{OSC\ }\ \ $\Longleftrightarrow$\ \ $0<\mathcal{H}^1 (E_{\mathbf{p},\mathbf{b}})<%
\infty$\ \ $\Longleftrightarrow$\ \ int$(E_{\mathbf{p},\mathbf{b}})\neq \varnothing$,
\end{center}
where int$(\cdot)$ denotes interior of set.

\begin{corollary}\label{int}
For $E_{\mathbf{p},\mathbf{b}}$ with $\dim_SE_{\mathbf{p},\mathbf{b}}=1$, then \emph{int}$(E_{\mathbf{p},\mathbf{b}})\neq \varnothing$ if and only if
\textbf{OSC} is fulfilled. Then the problem is decidable whether \emph{int}$(E_{\mathbf{p},\mathbf{b}})\neq
\varnothing$.
\end{corollary}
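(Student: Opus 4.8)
The plan is to derive Corollary \ref{int} as an immediate consequence of Corollary \ref{weishu} together with Schief's theorem, so almost all the work has already been done. Since we are in the case $\dim_S E_{\mathbf{p},\mathbf{b}}=1$, Corollary \ref{weishu} tells us that $\mathbf{OSC}$ holds if and only if $\dim_H E_{\mathbf{p},\mathbf{b}}=\dim_S E_{\mathbf{p},\mathbf{b}}=1$, and (since $E_{\mathbf{p},\mathbf{b}}$ is of finite type) the Hausdorff measure $\mathcal{H}^{\dim_H(E_{\mathbf{p},\mathbf{b}})}(E_{\mathbf{p},\mathbf{b}})$ is positive and finite by Theorem 1.2 of \cite{NW}. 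Combining these, $\mathbf{OSC}$ is equivalent to $0<\mathcal{H}^1(E_{\mathbf{p},\mathbf{b}})<\infty$.

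The remaining point is the equivalence $0<\mathcal{H}^1(E_{\mathbf{p},\mathbf{b}})<\infty \Longleftrightarrow \mathrm{int}(E_{\mathbf{p},\mathbf{b}})\neq\varnothing$, which holds for any compact subset of $\mathbb{R}^1$ that is the attractor of a self-similar IFS with similarity dimension $1$. One direction is trivial: if $\mathrm{int}(E_{\mathbf{p},\mathbf{b}})\neq\varnothing$ then $E_{\mathbf{p},\mathbf{b}}$ contains an interval, so $\mathcal{H}^1(E_{\mathbf{p},\mathbf{b}})>0$; finiteness is automatic because $E_{\mathbf{p},\mathbf{b}}$ is bounded (it is a compact attractor), so $\mathcal{H}^1(E_{\mathbf{p},\mathbf{b}})\le \mathrm{diam}(E_{\mathbf{p},\mathbf{b}})<\infty$ — in $\mathbb{R}^1$, $\mathcal{H}^1$ is just Lebesgue outer measure. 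For the converse, suppose $0<\mathcal{H}^1(E_{\mathbf{p},\mathbf{b}})<\infty$. Then by Schief's theorem \cite{S} (in the line, where $\mathbf{OSC}$ and positive $\mathcal{H}^1$-measure are equivalent for self-similar sets of dimension $1$) the $\mathbf{OSC}$ holds, and since $\dim_S E_{\mathbf{p},\mathbf{b}}=1$ the natural self-similar measure is comparable to $\mathcal{H}^1\!\restriction\! E_{\mathbf{p},\mathbf{b}}$; a set of positive Lebesgue measure on the line has density points, and a standard blow-up/rescaling argument at a density point (using the self-similarity to magnify a neighbourhood of the density point to cover a fixed scale) forces $E_{\mathbf{p},\mathbf{b}}$ to contain an interval, hence $\mathrm{int}(E_{\mathbf{p},\mathbf{b}})\neq\varnothing$.

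Putting the two chains of equivalences together gives
\[
\mathrm{int}(E_{\mathbf{p},\mathbf{b}})\neq\varnothing \ \Longleftrightarrow\ 0<\mathcal{H}^1(E_{\mathbf{p},\mathbf{b}})<\infty \ \Longleftrightarrow\ \mathbf{OSC}\text{ holds},
\]
which is the asserted equivalence. The decidability of "$\mathrm{int}(E_{\mathbf{p},\mathbf{b}})\neq\varnothing$" then follows verbatim from Theorem \ref{Th:Main}(1), since deciding $\mathbf{OSC}$ is exactly deciding whether the interior is non-empty in this regime.

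I do not expect any genuine obstacle here: the statement is a corollary, and both the finite-type input (Theorem 1.2 of \cite{NW}) and Schief's characterisation are quoted as available. The only mildly delicate point is the density-point argument showing that positive $\mathcal{H}^1$-measure of a dimension-$1$ self-similar set on the line forces a non-empty interior; but this is classical and, in the present context, is already implicitly used in the displayed chain of equivalences preceding the corollary in the excerpt, so it can be invoked rather than reproved.
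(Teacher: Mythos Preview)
Your proposal is correct and takes essentially the same approach as the paper: the chain $\mathrm{int}(E_{\mathbf{p},\mathbf{b}})\neq\varnothing \Leftrightarrow 0<\mathcal{H}^1(E_{\mathbf{p},\mathbf{b}})<\infty \Leftrightarrow \mathbf{OSC}$ is exactly what the paper states just before the corollary, with decidability then following from Theorem~\ref{Th:Main}. The only difference is that the paper attributes the entire chain directly to Schief's theorem (which, in the case where the similarity dimension equals the ambient dimension, already includes the equivalence with nonempty interior), whereas you route the first equivalence through Corollary~\ref{weishu} and a separate density-point/blow-up sketch; your detour is correct but unnecessary, since Schief's result already contains that implication.
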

\begin{remark}
For
$
E=\cup _{i=1}^{m}(\frac{1}{m}E+b_{i})
$
with $b_i\in \mathbb{Q}$  we have $\dim_S(E)=1$. See Example $3$ for different types.
\end{remark}

Lipschitz equivalence of fractals is also an interesting topic (\cite{CP},
\cite{FM}, \cite{DS}, \cite{RRW}, \cite{LLAD}, \cite{RWX}, \cite{X0}-\cite%
{XX1}). Let's consider the Lipschitz equivalence of the self-similar set
\begin{equation}  \label{Fi}
F=\cup _{i=1}^{m}(rF+b_{i})\text{\ \ with\ }r<1/m,
\end{equation}
where $r^{-1}>1$ is a P.V. number and $b_i\in \mathbb{Q}$.

The symbolic spaces are often regarded as an important tool for
researching Lipschitz equivalence between sets. We use $\Sigma_m^r$ to
denote the symbolic space $\{1,\cdots,m\}^\infty$ with the metric
\begin{equation*}
d(x_1x_2\cdots,y_1y_2\cdots)=r^{\min\{k|x_k\neq y_k\}}.
\end{equation*}
It is pointed out in \cite{XX1} that if $F$ fulfills OSC and totally
disconnectedness, then $F$ is Lipschitz equivalent to $\Sigma_m^r$, denoted
by $F\sim\Sigma_m^r$. Please refer to \cite{XX0} and \cite{Zhu} for weaker
versions of the above result. It is clear that
\begin{equation}\label{ss9}
s:=\dim_H(\Sigma_m^r)=\dim_S(F)=-\frac{\log m}{\log r}<1.
\end{equation}

\begin{corollary}\label{dengjia}
For $F$ defined as $(\ref{Fi}),$ then $F$ is Lipschitz equivalent to $\Sigma_m^r$
if and only if \textbf{OSC} is fulfilled. Then the problem is decidable
whether $F$ is Lipschitz equivalent to $\Sigma_m^r$.
\end{corollary}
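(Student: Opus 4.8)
The plan is to derive Corollary \ref{dengjia} from the already-established Theorem \ref{Th:Main} together with the structural facts quoted in the excerpt, so the burden is mostly to assemble the equivalences correctly rather than to invent new machinery. The statement asserts two things: first, that for $F=\cup_{i=1}^m(rF+b_i)$ with $r^{-1}>1$ a P.V. number, $b_i\in\mathbb{Q}$, and $r<1/m$, one has $F\sim\Sigma_m^r$ if and only if \textbf{OSC} holds; second, that this property is decidable. The decidability half is immediate once the equivalence is in hand: by Theorem \ref{Th:Main}(1), whether $F$ satisfies \textbf{OSC} is decidable (note $F$ is the special case of $E_{\mathbf p,\mathbf b}$ with all $p_i$ equal to the common value $p$ determined by $r=\lambda^{p}$, after possibly rescaling so that $\lambda^{-1}$ is the given P.V. number; if $r$ is not an integer power of $\lambda$ one instead takes $\lambda^{-1}=r^{-1}$ itself, which is still P.V.), and the equivalence transfers that decidability verbatim to the Lipschitz-equivalence question.

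So the mathematical content is the biconditional. For the forward direction I would argue: if $F\sim\Sigma_m^r$, then since $\Sigma_m^r$ is totally disconnected with Hausdorff dimension $s=-\log m/\log r<1$ (equation (\ref{ss9})), and Lipschitz maps preserve both Hausdorff dimension and the property of having finitely many connected components (indeed preserve topological structure in the sense needed), $F$ is totally disconnected with $\dim_H F = s = \dim_S F$. By Corollary \ref{weishu}, $\dim_H F=\dim_S F$ already forces \textbf{OSC}; alternatively one invokes Schief's theorem directly, $\dim_H F = \dim_S F$ together with finite type giving $0<\mathcal H^{s}(F)<\infty$ at the similarity dimension, hence \textbf{OSC}. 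For the reverse direction: if \textbf{OSC} holds, I want to apply the result of \cite{XX1} that $F\sim\Sigma_m^r$ provided $F$ satisfies \textbf{OSC} \emph{and} is totally disconnected. The total disconnectedness is automatic here because $r<1/m$ forces $\dim_S F = s<1$ by (\ref{ss9}); since under \textbf{OSC} we have $\dim_H F = s<1$, a subset of $\mathbb{R}$ of Hausdorff dimension strictly less than $1$ has empty interior and in fact is totally disconnected (a connected subset of $\mathbb{R}$ with more than one point is an interval, which has dimension $1$). Thus the hypothesis $r<1/m$ is precisely what lets us drop ``totally disconnected'' from the cited theorem's hypotheses and get the clean biconditional.

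Assembling: \textbf{OSC} $\Rightarrow$ ($\dim_H F<1$, hence $F$ totally disconnected) $\Rightarrow$ (by \cite{XX1}) $F\sim\Sigma_m^r$; and $F\sim\Sigma_m^r$ $\Rightarrow$ ($F$ totally disconnected, $\dim_H F = \dim_H\Sigma_m^r = s = \dim_S F$) $\Rightarrow$ (by Schief's theorem as in Corollary \ref{weishu}) \textbf{OSC}. Then decidability of ``$F\sim\Sigma_m^r$'' is equivalent to decidability of ``$F$ satisfies \textbf{OSC}'', which is Theorem \ref{Th:Main}(1). The one place demanding a little care — the main (though minor) obstacle — is justifying that Lipschitz equivalence transports the relevant properties: that it preserves Hausdorff dimension is standard, but one should also note that $F\sim\Sigma_m^r$ plus total disconnectedness of $\Sigma_m^r$ yields total disconnectedness of $F$, which is immediate since a bi-Lipschitz homeomorphism is a homeomorphism and total disconnectedness is a topological invariant. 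I would phrase the proof in three or four lines citing Corollary \ref{weishu}, equation (\ref{ss9}), Schief's theorem \cite{S}, and the Lipschitz-equivalence result of \cite{XX1}, and close by invoking Theorem \ref{Th:Main}.
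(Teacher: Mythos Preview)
Your proof is correct and follows essentially the same route as the paper: the cited result of \cite{XX1} for one implication, Schief's theorem for the other, and Theorem~\ref{Th:Main}(1) for decidability. Two small streamlinings the paper makes: it observes that $\dim_H F\le\dim_S F<1$ holds unconditionally (so total disconnectedness of $F$ comes for free without first invoking \textbf{OSC}), and for the reverse direction it transports the positive finite $\mathcal{H}^s$-measure of $\Sigma_m^r$ directly through the bi-Lipschitz map rather than passing through Corollary~\ref{weishu}.
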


\begin{proof}
Note that $\dim_H F\leq\dim_S F<1$, which implies that $F$ is totally
disconnected. Then by \cite{XX1}, it holds that \textbf{OSC}\ $%
\Longrightarrow$\ $F\sim\Sigma_m^r$.

By a usual discussion, we have $0<\mathcal{H}^s(\Sigma_m^r)<\infty, $ where $s$ is defined in (\ref{ss9}). If
suppose that $F\sim\Sigma_m^r$, then $0<\mathcal{H}^s(F)<\infty$. Hence, $F$
satisfies \textbf{OSC} by Schief's theorem.

Therefore, the corollary is obtained from the item (1) in Theorem \ref%
{Th:Main}.
\end{proof}

For example, let $\lambda =1/5,$  $\mathbf{p}=(1,1,1)$ and $\mathbf{b}=(0,7/10,4/5).$
Then Example 2 of \cite{WX} shows that

(1) $E_{\mathbf{p},\mathbf{b}}$ satisfies $\textbf{OSC};$

(2) $E_{\mathbf{p},\mathbf{b}}$ does not satisfy $\textbf{SSC}.$
\newline
By Corollary \ref{dengjia},  $E_{\mathbf{p},\mathbf{b}}\sim \Sigma _{3}^{1/5}.$

\subsection{Graph and algorithm}

$\ $

In fact, we can solve the above problems by constructing a directed graph $G$
and establishing the following criteria:

\begin{theorem}
\label{Th:OSC2}The \textbf{OSC} fails for $\{\lambda
^{p_{i}}x+b_{i}\}_{i=1}^{m}$ if and only if there is a finite path in $G$
starting from a point of $\Lambda $ and ending at $(0,0).$
\end{theorem}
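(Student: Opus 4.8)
The plan is to characterize the failure of \textbf{OSC} through a reformulation in terms of the overlapping structure of cylinders, and then to encode that structure as reachability in a finite directed graph. First I would recall (using Schief's theorem \cite{S} and the fact, noted in the excerpt, that the IFS $\{\lambda^{p_i}x+b_i\}$ is of finite type since $\lambda^{-1}$ is a P.V. number) that \textbf{OSC} is equivalent to a quantitative separation statement: there is a uniform lower bound on the ``overlap'' between distinct cylinders of comparable size. Concretely, for two words $\mathbf{i},\mathbf{j}\in\Sigma^\ast$ the relative position of the cylinders $S_{\mathbf{i}}(E_{\mathbf{p},\mathbf{b}})$ and $S_{\mathbf{j}}(E_{\mathbf{p},\mathbf{b}})$ is governed by the pair consisting of the ratio of their contraction ratios, which is a power $\lambda^{p_{\mathbf{i}}-p_{\mathbf{j}}}$ of $\lambda$, together with the normalized difference of translation parts, which lies in $\mathbb{Z}$ after clearing denominators as in Remark \ref{R:a}. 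I would define $\Lambda$ to be this finite set of ``initial'' relative-position data arising from the one-step pairs $(i,j)$ with $i\neq j$, and define the vertices of $G$ to be the relative-position data reachable from $\Lambda$ under the IFS dynamics; the key point to verify is that only finitely many such data ever occur, which is exactly where the P.V. hypothesis enters through Garsia's inequality \eqref{garsia}.

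Next I would make precise the transition rule defining the edges of $G$: from a state recording the relative position of $S_{\mathbf{i}}(E)$ and $S_{\mathbf{j}}(E)$, the successor states record the relative positions of $S_{\mathbf{i}k}(E)$ and $S_{\mathbf{j}\ell}(E)$ for those pairs $(k,\ell)$ for which the refined cylinders still have a chance to overlap (equivalently, for which the relative-position data stays within the region where the distance between the cylinders is less than the sum of their diameters). The distinguished vertex $(0,0)$ represents the situation of an \emph{exact overlap}: ratio $1$ (i.e.\ equal contraction) and translation difference $0$, meaning $S_{\mathbf{i}}=S_{\mathbf{j}}$ as maps while $\mathbf{i}\neq\mathbf{j}$. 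The heart of the argument is then the equivalence: \textbf{OSC} fails $\iff$ there exist $\mathbf{i}\neq\mathbf{j}$ with $S_{\mathbf{i}}(E)=S_{\mathbf{j}}(E)$ (a ``complete overlap'' in the sense of Rao--Wen \cite{RW}), $\iff$ the vertex $(0,0)$ is reachable in $G$ from some vertex of $\Lambda$. The forward direction uses finite type plus Schief: if \textbf{OSC} fails, the weak separation/finite type structure forces the overlaps to be non-generic, and a pigeonhole/compactness argument on the finitely many states produces two distinct words inducing the identical similitude, which is precisely a path to $(0,0)$; the reverse direction is easy, since an exact overlap immediately violates the dimension/measure criterion for \textbf{OSC}.

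The main obstacle I anticipate is the forward implication, i.e.\ showing that failure of \textbf{OSC} actually yields an exact coincidence $S_{\mathbf{i}}=S_{\mathbf{j}}$ rather than merely a sequence of ever-closer-but-never-equal cylinders. This is where one must combine two ingredients carefully: (i) the finiteness of the state graph $G$, proved via Garsia's lower bound \eqref{garsia} applied to the polynomial in $\lambda$ whose value is the (cleared-denominator) translation difference --- this shows that a nonzero difference cannot be arbitrarily small, so the reachable states form a finite set; and (ii) the characterization (from Schief \cite{S}, valid because $0<\mathcal{H}^{\dim_H}(E_{\mathbf{p},\mathbf{b}})<\infty$ by finite type) that \textbf{OSC} is equivalent to the absence of a uniform family of overlaps. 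Once $G$ is finite, any infinite sequence of overlapping refinements must revisit a state, and the revisiting together with the self-similar scaling forces the two branches to describe the same limit set with the same similitude, giving the path to $(0,0)$. I would organize the write-up so that the finiteness lemma (via Garsia) and the overlap-characterization lemma (via Schief) are proved separately, and then Theorem \ref{Th:OSC2} follows by assembling them; Theorem \ref{Th:Main}(1) is then immediate because reachability of $(0,0)$ in the explicitly constructed finite graph $G$ is decidable (indeed in polynomial time, e.g.\ by the graph-search algorithms recalled in the excerpt), and part (2) for \textbf{SSC} follows by the analogous construction where the distinguished target set is enlarged to all states representing cylinders of the \emph{attractor} that intersect, not merely those that coincide.
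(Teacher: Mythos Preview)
Your overall plan matches the paper's: encode the relative position $S_\tau^{-1}S_\sigma$ of cylinder pairs as vertices of a finite graph (finiteness of the state set via Garsia's inequality \eqref{garsia}), identify failure of \textbf{OSC} with the existence of an exact overlap $S_\sigma = S_\tau$ for distinct $\sigma,\tau$, and observe that such an overlap is encoded as a path from $\Lambda$ to $(0,0)$. The paper obtains the key equivalence ``\textbf{OSC} holds $\Leftrightarrow$ no exact overlap'' as Lemma~\ref{lem:OSC}, by directly invoking Proposition~1 of Zerner~\cite{Z}: under \textbf{WSP} (which holds here), \textbf{OSC} is equivalent to $S_{\mathbf{i}}\neq S_{\mathbf{j}}$ for all $\mathbf{i}\neq\mathbf{j}$. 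The remaining work (your finiteness lemma, and checking that when $S_\sigma=S_\tau$ the intermediate states stay in $\Xi$) is the paper's Lemma~\ref{lem:pv} and Proposition~\ref{Prop:OSC1}.

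The genuine gap is in your attempt to \emph{prove} that forward implication by the ``revisiting a state'' argument in your last paragraph. Revisiting a state only says $S_{\tau_k}^{-1}S_{\sigma_k}=S_{\tau_{k'}}^{-1}S_{\sigma_{k'}}$ for some $k<k'$, i.e.\ the path enters a loop; it does \emph{not} force that loop to pass through $(0,0)$. Indeed the paper's Example~2 exhibits an infinite path in $G$ (so some state is revisited infinitely often) while $(0,0)$ is never reached and \textbf{OSC} holds. What you are describing is the mechanism behind Theorem~\ref{Th:SSC2} (infinite path $\Leftrightarrow$ \textbf{SSC} fails), not Theorem~\ref{Th:OSC2}. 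The correct argument is the one you gesture at earlier but do not carry out: by Bandt--Graf the failure of \textbf{OSC} puts the identity in the \emph{closure} of $\{S_\tau^{-1}S_\sigma:\sigma\neq\tau\}$, and Garsia's bound makes that set discrete, so the identity is actually \emph{attained}; this is precisely Zerner's result, and the paper simply cites it. A minor further point: you phrase complete overlap as $S_{\mathbf{i}}(E)=S_{\mathbf{j}}(E)$, but reaching $(0,0)$ encodes the stronger statement $S_{\mathbf{i}}=S_{\mathbf{j}}$; these differ when $E$ is a singleton, a degenerate case the paper disposes of separately in the remark after Lemma~\ref{lem:OSC}.
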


\begin{theorem}
\label{Th:SSC2}The \textbf{SSC} fails for $\{\lambda
^{p_{i}}x+b_{i}\}_{i=1}^{m}$ if and only if there is an infinite path in $G$
starting from a point of $\Lambda .$
\end{theorem}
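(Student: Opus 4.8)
The plan is to reuse the graph $G$ together with the combinatorial encoding of overlaps that (presumably) underlies the proof of Theorem \ref{Th:OSC2}, and to read off the SSC-failure criterion from the same bookkeeping device. Recall that $\mathbf{SSC}$ fails for $\{S_i\}_{i=1}^m$ precisely when there exist two distinct finite words $\mathbf{i}=i_1\cdots i_k$ and $\mathbf{j}=j_1\cdots j_l$ with $i_1\neq j_1$ such that $S_{\mathbf{i}}(E_{\mathbf{p},\mathbf{b}})\cap S_{\mathbf{j}}(E_{\mathbf{p},\mathbf{b}})\neq\varnothing$; equivalently, there are two infinite coding sequences $\sigma,\tau\in\{1,\dots,m\}^{\infty}$ with $\sigma_1\neq\tau_1$ that code the same point of $E_{\mathbf{p},\mathbf{b}}$. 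The key point is that, because $\lambda^{-1}$ is a P.V. number and the $b_i$ are rational (so, by Remark \ref{R:a}, integers), Garsia's inequality \eqref{garsia} forces the set of ``relative positions'' $\lambda^{-n}(S_{\mathbf{i}}(0)-S_{\mathbf{j}}(0))$ that can occur between two cylinders of comparable size to lie in a \emph{finite} set; this finiteness is exactly what makes the vertex set of $G$ finite, and it is the engine behind both theorems.

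First I would make precise the vertex set $\Lambda$ and the edge relation of $G$: a vertex records (a normalized version of) the pair consisting of the difference of the two base points and the difference of the two remaining contraction ratios (equivalently the exponent difference), for a pair of cylinders that still have overlapping convex hulls; the starting set $\Lambda$ consists of the states arising from the $\binom{m}{2}$ pairs $(S_i,S_j)$, $i\neq j$, whose hulls overlap, and an edge $v\to v'$ records the transition obtained by descending into one further generation in each of the two branches (or only in the ``larger'' one, depending on the normalization) whenever the descendant hulls still meet. With this set up, a finite path from $\Lambda$ of length $k$ corresponds exactly to a pair of words $(\mathbf{i},\mathbf{j})$, $i_1\neq j_1$, of the appropriate lengths whose cylinders $S_{\mathbf{i}}(\text{hull})$ and $S_{\mathbf{j}}(\text{hull})$ intersect; and an \emph{infinite} path corresponds to a pair of infinite sequences $\sigma\neq\tau$ (differing in the first symbol) all of whose prefix-cylinder hulls intersect.

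The heart of the argument is the equivalence ``infinite path in $G$ from $\Lambda$'' $\iff$ ``$\mathbf{SSC}$ fails.'' For the forward direction, given an infinite path, the nested compact sets $S_{\sigma_1\cdots\sigma_n}(\mathrm{hull}(E_{\mathbf{p},\mathbf{b}}))$ and $S_{\tau_1\cdots\tau_n}(\mathrm{hull}(E_{\mathbf{p},\mathbf{b}}))$ each shrink to a single point of $E_{\mathbf{p},\mathbf{b}}$ and, since consecutive hulls overlap and diameters go to $0$, the two limit points coincide, giving a point in $S_{\sigma_1}(E)\cap S_{\tau_1}(E)$ with $\sigma_1\neq\tau_1$, hence SSC fails. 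For the converse, if SSC fails pick $x\in S_{\mathbf{i}}(E)\cap S_{\mathbf{j}}(E)$ with $i_1\neq j_1$; then $x$ has codings $\sigma,\tau$ with $\sigma_1\neq\tau_1$, and for every $n$ the hulls of the length-$n$ prefixes contain $x$, so they intersect, which yields an infinite walk in $G$ starting from $\Lambda$ — and since $G$ is finite this walk is a genuine infinite path (equivalently, it must enter a cycle reachable from $\Lambda$). The main obstacle, and the step I would spend the most care on, is the bookkeeping that shows the vertex set is finite and that the ``hulls overlap'' relation is faithfully tracked by the transitions: one has to check that normalizing by the larger of the two cylinder sizes keeps the recorded difference bounded (via Garsia's bound, the differences lie in $\lambda^{-N}\mathbb{Z}$ for a controlled $N$ and are bounded in absolute value by a constant depending only on $\mathrm{hull}(E_{\mathbf{p},\mathbf{b}})$), and that no overlapping pair of descendant cylinders is lost when passing to the normalized state — this is where the P.V. hypothesis is used essentially and where the construction of $G$ must be quoted or verified in full detail.
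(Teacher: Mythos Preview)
Your proposal is correct and follows essentially the same approach as the paper: encode the pair $(S_{\tau}^{-1}S_{\sigma})$ by the state $(p_{\sigma}-p_{\tau},\pi_{\mathbb{R}}[(p_{\tau},b_{\tau})^{-1}(p_{\sigma},b_{\sigma})])$, use Garsia's inequality to force these states into a finite set $\Xi$, and identify an infinite path from $\Lambda$ with a pair of infinite codings $\sigma,\tau$ (with $\sigma_1\neq\tau_1$) that project to the same point of $E_{\mathbf{p},\mathbf{b}}$. The one place your outline is looser than the paper is the ``one further generation'' transition: because the exponents $p_i$ may differ, the paper does \emph{not} step letter by letter but instead picks the shortest prefixes $\sigma_k,\tau_k$ with $p_{\sigma_k},p_{\tau_k}\in((k-1)T,kT]$, so that each edge corresponds to appending words $\alpha_k,\beta_k$ with $1\le p_{\alpha_k},p_{\beta_k}\le 2T-1$; this is exactly the normalization you flag as ``the main obstacle,'' and once you adopt it your argument coincides with Propositions~\ref{prop:ssc1}--\ref{Th:SSC1}.
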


\begin{remark}
Using Dijkstra's algorithm, we can solve the existence of paths mentioned in
Theorems $2-3,$ see Lemma $\ref{Lem: QuestionAB}.$
\end{remark}

Now, we will describe the graph $G$ and $\Lambda $ which is a\ subset of
vertex set of $G.$

Let $B=\{0\}\cup \{b_{1},\cdots ,b_{m}\}.$ Denote $\mathcal{Z}-\mathcal{Z}%
=\{P:P$ is a polynomials with coefficients in $B-B\}.$ Set $T=\max_{i}p_{i}.$

Let $G$ be a directed finite graph with vertex set
\begin{equation*}
\Xi =\{n\in \mathbb{Z}:|n|\leq T-1\}\times \mathcal{C},
\end{equation*}%
with
\begin{equation}
\mathcal{C}=\{x:|x|\leq \frac{(1+\lambda ^{-T+1})\max_{i}|b_{i}|}{1-\lambda }%
,\ \lambda ^{-T+2}x=R(\lambda ^{-1})\text{\ with\ }R\in \mathcal{Z}-\mathcal{%
Z}\}.  \label{xi}
\end{equation}

For a finite word $\sigma =i_{1}i_{2}\cdots i_{k}$ with letters in $%
\{1,\cdots ,m\},$ then $S_{\sigma }=S_{i_{1}}\circ \cdots \circ S_{i_{k}}.$
We suppose
\begin{equation*}
S_{\sigma }x=\lambda ^{p_{\sigma }}x+b_{\sigma }\text{ with }p_{\sigma
}=p_{i_{1}}+\cdots +p_{i_{k}}.
\end{equation*}

Given $(p,b),$ $(p^{\prime },b^{\prime })\in \Xi ,$ there is a directed edge
$e$ from $(p,b)\ $to $(p^{\prime },b^{\prime }),$ if and only if there are
words $\alpha ,\beta $ with $1\leq p_{\alpha },p_{\beta }\leq 2T-1$ such that%
\begin{equation*}
p^{\prime }=p_{\alpha }+p-p_{\beta }\text{ and }b^{\prime }=\lambda
^{-p_{\beta }}b+\lambda ^{-p_{\beta }+p}b_{\alpha }-\lambda ^{-p_{\beta
}}b_{\beta }.
\end{equation*}%
Let $\Lambda =\Xi\cap \{(p_{j}-p_{i},\lambda ^{-p_{i}}b_{j}-\lambda
^{-p_{i}}b_{i}):i\neq j \}.$

\begin{remark}
In fact, for self-similar set of finite type, we have a scheme to test the $%
\mathbf{OSC}$ based on computing the exact Hausdorff dimension. In \cite{NW}%
, for the IFS $\{\phi _{j}(x)=\rho _{j}R_{j}x+b_{j},1\leq j\leq q\}$, Ngai
and Wang described a scheme for computing the Hausdorff dimension of its
attractors $K$:

\begin{itemize}
\item Step 1. select a suitable invariant open set $\Omega $;

\item Step 2. determine neighborhood type $\Omega (v)$;

\item Step 3. compute all neighborhood types;

\item Step 4. calculate the incidence matrix $S$ of neighborhood type;

\item Step 5. obtain the spectral radius $\lambda $ of $S.$
\end{itemize}
Then $\dim _{H}K=-\log \lambda /\log \rho ,$ where $\rho =\min_{j}\rho _{j}.$
By Theorem $1.2$ in \cite{NW} and Schief's theorem, we have
\begin{equation*}
\mathbf{OSC\ }\Longleftrightarrow \dim _{S}K=-\log \lambda /\log \rho .
\end{equation*}

However, from the view of algorithm, when implementing the scheme we have to
face some practical difficulties: $(1)$ an exhaustive search in the words set $%
\mathcal{V}_{k}$ $($defined as \cite{NW}$)$ is inevitable to look for all
neighborhood type $\Omega (v)$ of $v\in \mathcal{V}_{k}$ for every $(k\geq 0)
$; $(2)$ a heavy computation is inevitable to find all distinct neighborhood
types, due to the unknownness of the number of types.


For example, for a class of IFSs with the integral parameter $n>0$,
\begin{equation}
T_{1}=\frac{1}{3}x,\text{\ \ }T_{2}=\frac{1}{3}x+\frac{2}{3^{n+1}},\text{\ \
}T_{3}=\frac{1}{3}x+\frac{2}{3},  \label{Ti}
\end{equation}
Rao and Wen \cite{RW} obtained $2^{n}$ neighborhood types of $\{T_{i}\}$
which implies that an exponential calculated quantity is need to
differentiate neighborhood types of $\{T_{i}\}$, also see \cite{Guo} for
details. However, our algorithm in Section $4$ will avoid effectively these
difficulties. Example $1$ in Section $6$ tells us that a path with length $n$ in $G$ starting
from a point of $\Lambda $ and ending at $(0,0)$ can be found quickly in a
polynomial time, which leads to the \textbf{OSC}'s failing for $\{T_{i}\}$
in $(\ref{Ti})$ in virtue of Theorem $\ref{Th:OSC2}.$ 
%
\end{remark}

\medskip

The paper is organized as follows. Section 2 includes some preliminaries,
for example, Lemma \ref{Lem: QuestionAB} describes the decidability of
testing the existence of paths in Theorems \ref{Th:OSC2} and \ref{Th:SSC2}. In
Section 3, the \emph{recursive structure} is introduced to construct the
graph. In Section 4, we construct the graph and design these algorithms. In
Section \ref{sec:OSC}, we obtain a \emph{refinement algorithm} for the
\textbf{OSC}, especially we replace the upper bound $|x|\leq \frac{%
(1+\lambda ^{-T+1})\max_{i}|b_{i}|}{1-\lambda }$ in (\ref{xi}) by $|x|<\frac{%
2\max_{i}|b_{i}|}{(1-\lambda )^{2}}.$ In the last section, we will give some
examples to illustrate these algorithms.

\medskip

\section{Preliminaries}

\subsection{WSP and OSC}

$\ $\

We always fix $\mathbf{p}=(p_{1},\cdots ,p_{m})$ and $\mathbf{b}%
=(b_{1},\cdots ,b_{m})$ and write $E=E_{\mathbf{p},\mathbf{b}}$ for
notational convenience. Then $E=\cup _{i=1}^{m}S_{i}(E)$ where
\begin{equation*}
S_{i}(x)=\lambda ^{p_{i}}x+b_{i}
\end{equation*}%
with $p_{i}\in \mathbb{N}$ and $b_{i}\in \mathbb{Z}.$ Here $E\subset
\{x:|x|\leq \frac{\max_{i}|b_{i}|}{1-\lambda }\}.$ Write
\begin{equation}
T=\max_{i}p_{i}.  \label{T}
\end{equation}

Set $\Sigma =\{1,\cdots ,m\}^{\infty }$ and $\Sigma ^{\ast }=\cup
_{k=1}^{\infty }\{1,\cdots ,m\}^{k}.$ If $w=u\ast v$ for words $u\in \Sigma
^{\ast }$ and $w,v\in \Sigma ^{\ast }$ or $\Sigma ,$ we denote $u\prec w,$%
and write $w\backslash u=v.$ Let $[i]^{k}$ denote $\underset{k}{\underbrace{%
i\cdots i}}.$

For $\sigma \in \Sigma ^{\ast },$ we write
\begin{equation*}
S_{\sigma }(x)=\lambda ^{p_{\sigma }}x+b_{\sigma }.
\end{equation*}

Given $(p,b)\in \mathbb{Z}\times \mathbb{R}$, we let
\begin{equation*}
h_{(p,b)}(x)=\lambda ^{p}x+b.
\end{equation*}%
Then
\begin{eqnarray*}
h_{(p_{1},b_{1})}\circ h_{(p_{2},b_{2})} &=&h_{(p_{1}+p_{2},\lambda
^{p_{1}}b_{2}+b_{1})}, \\
h_{(p,b)}\circ h_{(-p,-\lambda ^{-p}b)} &=&h_{(-p,-\lambda ^{-p}b)}\circ
h_{(p,b)}=h_{(0,0)}.
\end{eqnarray*}%
Hence we obtain a group structure on $\mathbb{Z}\times \mathbb{R}$ satisfying%
\begin{equation*}
(p_{1},b_{1})\cdot (p_{2},b_{2})=(p_{1}+p_{2},\lambda ^{p_{1}}b_{2}+b_{1}),%
\text{ }(p,b)^{-1}=(-p,-\lambda ^{-p}b).
\end{equation*}%
We will omit $\cdot $ in the multiplication.

Let $\pi _{\mathbb{Z}}:\mathbb{Z}\times \mathbb{R\rightarrow Z}\ $and $\pi _{%
\mathbb{R}}:\mathbb{Z}\times \mathbb{R\rightarrow R}$ be the projections
defined by
\begin{equation*}
\pi _{\mathbb{Z}}(p,b)=p\text{ and }\pi _{\mathbb{R}}(p,b)=b.
\end{equation*}

For a letter $i\in \{1,\cdots ,m\},$ there is an element $(p_{i},b_{i})\in
\mathbb{N}\times \mathbb{R}$ such that the contracting similitude $%
S_{i}(x)=h_{(p_{i},b_{i})}.$ For a finite word $\sigma =i_{1}\cdots i_{k},$
then
\begin{equation*}
(p_{\sigma },b_{\sigma })=(p_{i_{1}},b_{i_{1}})\cdots (p_{i_{k}},b_{i_{k}})%
\text{ and }S_{\sigma }=h_{(p_{\sigma },b_{\sigma })}.
\end{equation*}%
Let $\mathcal{Z}=\{P:P$ is a polynomials with coefficients in $B\}.$

\begin{lemma}
\label{lem:pir}If $|p_{\sigma }-p_{\tau }|\leq T-1,$ then
\begin{equation*}
\pi _{\mathbb{R}}[(p_{\tau },b_{\tau })^{-1}(p_{\sigma },b_{\sigma
})]=\lambda ^{T-2}R(\lambda ^{-1}),
\end{equation*}%
where $R\in \mathcal{Z}-\mathcal{Z},$ i.e., $R$ is a polynomial with
coefficients in $B-B.$
\end{lemma}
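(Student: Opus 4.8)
The plan is to compute the group element $(p_\tau, b_\tau)^{-1}(p_\sigma, b_\sigma)$ explicitly using the group law recorded just before the lemma, and then track where the rational number $\pi_{\mathbb R}$ of this product lives. Using $(p,b)^{-1} = (-p, -\lambda^{-p}b)$ and $(p_1,b_1)\cdot(p_2,b_2) = (p_1+p_2, \lambda^{p_1}b_2 + b_1)$, I would first write
\begin{equation*}
(p_\tau, b_\tau)^{-1}(p_\sigma, b_\sigma) = (-p_\tau, -\lambda^{-p_\tau}b_\tau)\cdot(p_\sigma, b_\sigma) = \bigl(p_\sigma - p_\tau,\ \lambda^{-p_\tau}b_\sigma - \lambda^{-p_\tau}b_\tau\bigr),
\end{equation*}
so that $\pi_{\mathbb R}$ of the product equals $\lambda^{-p_\tau}(b_\sigma - b_\tau)$. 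The task is then to show this number has the form $\lambda^{T-2}R(\lambda^{-1})$ with $R\in\mathcal Z-\mathcal Z$.

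Next I would recall that $b_\sigma$ and $b_\tau$ are the $\pi_{\mathbb R}$-coordinates obtained from products $(p_{i_1},b_{i_1})\cdots(p_{i_k},b_{i_k})$, so expanding the group law by induction gives $b_\sigma = \sum_{j} \lambda^{p_{i_1}+\cdots+p_{i_{j-1}}} b_{i_j}$, a polynomial in $\lambda$ with coefficients among the $b_i \in B$ (constant term included since $0\in B$); that is, $b_\sigma = Q_\sigma(\lambda)$ for some $Q_\sigma\in\mathcal Z$, and similarly $b_\tau = Q_\tau(\lambda)$. Hence $b_\sigma - b_\tau = (Q_\sigma - Q_\tau)(\lambda)$ is a polynomial in $\lambda$ with coefficients in $B-B$. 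The key point is the degree/valuation bookkeeping: each monomial appearing in $Q_\sigma$ or $Q_\tau$ is $\lambda^{e}$ with $0\le e \le p_\sigma - p_{i_k}$ or $0\le e\le p_\tau - p_{i_k'}$ respectively — in any case $0 \le e \le \max(p_\sigma, p_\tau) - 1$. So $\lambda^{-p_\tau}(b_\sigma - b_\tau) = \sum_e c_e \lambda^{e - p_\tau}$ with $c_e\in B-B$ and exponents $e - p_\tau$ ranging between $-p_\tau$ and $\max(p_\sigma,p_\tau)-1-p_\tau$.

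Finally I would use the hypothesis $|p_\sigma - p_\tau|\le T-1$ to control these exponents: $e - p_\tau \ge -p_\tau$ is not directly bounded, but I should instead factor more carefully. Writing $\lambda^{-p_\tau}(b_\sigma - b_\tau)$ and noting $b_\sigma = \lambda^{p_\tau - p_{i_k}}(\cdots) + \cdots$ — actually the cleaner route is: every exponent $e-p_\tau$ satisfies $e - p_\tau \ge -(T-1)$ if I pull out the common factor $\lambda^{T-2}$, i.e. I claim $\lambda^{-p_\tau}(b_\sigma - b_\tau)\cdot\lambda^{-(T-2)}$ is a polynomial in $\lambda^{-1}$ with coefficients in $B-B$. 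Concretely, each term contributes $\lambda^{e-p_\tau-(T-2)}$; since $e\ge 0$ and $p_\tau \le \max(p_\sigma,p_\tau)$, and since $\max(p_\sigma,p_\tau) - p_\tau \le |p_\sigma - p_\tau| \le T-1$ when $p_\sigma\ge p_\tau$ (and the $p_\tau$-side terms give $e - p_\tau \le -1 < 0$ automatically, wait — need $e\le p_\tau - 1$ there), the largest exponent is $\le (T-1) + (\text{something}\le 0)$, hmm. The cleanest statement to verify is that all exponents $e - p_\tau - (T-2)$ are $\le 0$, equivalently $e \le p_\tau + T - 2$; since $e\le \max(p_\sigma,p_\tau)-1 \le p_\tau + (T-1) - 1 = p_\tau + T - 2$, this holds. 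So $R(x) := \sum_e c_e x^{p_\tau + T - 2 - e}$ is a genuine polynomial with coefficients in $B-B$, and $\lambda^{-p_\tau}(b_\sigma-b_\tau) = \lambda^{T-2}R(\lambda^{-1})$, as claimed.

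The main obstacle is the exponent bookkeeping — one must handle the two contributions ($b_\sigma$-terms and $b_\tau$-terms) on an equal footing and verify that the hypothesis $|p_\sigma - p_\tau|\le T-1$, together with $p_{i_k}, p_{i_k'}\ge 1$, is exactly what forces every monomial, after dividing by $\lambda^{T-2}$, to become a nonnegative power of $\lambda^{-1}$. I would double-check the edge cases $p_\sigma = p_\tau$ and the empty/length-one words, and confirm that combining coefficients from the two polynomials stays within $B - B$ (it does, since $B\ni 0$, so each of $Q_\sigma, Q_\tau$ already has coefficients in $B\subset B-B$, and their difference lands in $B-B$).
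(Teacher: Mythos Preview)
Your proof is correct and follows essentially the same route as the paper: both write $b_\sigma=Q_\sigma(\lambda)$ and $b_\tau=Q_\tau(\lambda)$ with $Q_\sigma,Q_\tau\in\mathcal Z$ of degree at most $p_\sigma-1$ and $p_\tau-1$, then use the hypothesis $|p_\sigma-p_\tau|\le T-1$ to bound $\deg Q_\sigma\le p_\tau+T-2$, which is exactly what forces every monomial in $\lambda^{-p_\tau}(b_\sigma-b_\tau)$ to have $\lambda$-exponent at most $T-2$. The paper's write-up differs only cosmetically, handling $\lambda^{-p_\tau}b_\sigma$ and $\lambda^{-p_\tau}b_\tau$ separately (the latter as $P(\lambda^{-1})$ with $P\in\mathcal Z$) rather than combining them into a single $\mathcal Z-\mathcal Z$ polynomial as you do.
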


\begin{proof}
In fact, for $\sigma =i_{1}\cdots i_{k},$ using induction, we can obtain that%
\begin{equation*}
S_{\sigma }(x)=\lambda ^{p_{\sigma }}x+Q(\lambda ),
\end{equation*}%
where $Q\in \mathcal{Z}$ with $\deg (Q)<p_{\sigma }.$ Then
\begin{equation*}
S_{\tau }^{-1}S_{\sigma }(x)=\lambda ^{p_{\sigma }-p_{\tau }}x+[\lambda^{-p_{\tau }}Q(\lambda )-P(\lambda ^{-1})]\text{ with }P\in \mathcal{Z}.
\end{equation*}%
Since $\deg (Q)\leq p_{\tau }+(T-2),$
\begin{equation*}
 \lambda^{-p_{\tau }}Q(\lambda )-P(\lambda ^{-1})=\lambda
^{T-2}R(\lambda ^{-1}),
\end{equation*}%
where $R(x)$ is a polynomial with coefficients in $B-B.$
\end{proof}

We recall the following result.

\begin{claim}
$($\cite{L}$)$ The \textbf{WSP} fulfills for $\{S_{i}\}_{i=1}^{m}.$
\end{claim}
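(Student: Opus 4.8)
The plan is to deduce the WSP from the fact, already quoted in the introduction (the last bullet in the list of properties of P.V. numbers, citing Theorem 2.9 of \cite{NW} and \cite{L}), that the IFS $\{\lambda^{p_i}x+b_i\}_{i=1}^m$ with $\lambda^{-1}$ a P.V. number, $p_i\in\mathbb N$ and $b_i\in\mathbb Q$ is of finite type, and that finite type implies the WSP. Concretely, I would argue directly via Garsia's separation inequality \eqref{garsia}. Recall the WSP fails iff $\mathbf{id}$ is an accumulation point of $\{f_{\mathbf i}^{-1}f_{\mathbf j}:\mathbf i\neq\mathbf j\in\Sigma^\ast\}$; so it suffices to produce a constant $c>0$ such that whenever $S_{\mathbf i}^{-1}S_{\mathbf j}\neq\mathbf{id}$ one has $\|S_{\mathbf i}^{-1}S_{\mathbf j}-\mathbf{id}\|\geq c$ in any reasonable sense (say, the map $x\mapsto \lambda^{p_{\mathbf j}-p_{\mathbf i}}x+d$ differs from the identity by at least $c$ on a fixed compact set, equivalently $|p_{\mathbf j}-p_{\mathbf i}|$ is bounded below by $1$ when nonzero, or the translation part $d\neq 0$ is bounded below).

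The key steps, in order, are: First, write $S_{\mathbf i}(x)=\lambda^{p_{\mathbf i}}x+b_{\mathbf i}$ with $b_{\mathbf i}=Q_{\mathbf i}(\lambda)$ for a polynomial $Q_{\mathbf i}\in\mathcal Z$ (coefficients in $B$), $\deg Q_{\mathbf i}<p_{\mathbf i}$, exactly as in the proof of Lemma \ref{lem:pir}. Then $S_{\mathbf i}^{-1}S_{\mathbf j}(x)=\lambda^{p_{\mathbf j}-p_{\mathbf i}}x+d_{\mathbf i,\mathbf j}$ where, by the computation in Lemma \ref{lem:pir}, $d_{\mathbf i,\mathbf j}=\lambda^{-p_{\mathbf i}}Q_{\mathbf j}(\lambda)-P(\lambda^{-1})$ for some $P\in\mathcal Z$; after clearing the denominator $\lambda^{-p_{\mathbf i}}$ and using the common denominator $a$ of the $b_i$ (Remark \ref{R:a}), $a\lambda^{p_{\mathbf i}}d_{\mathbf i,\mathbf j}=L(\lambda^{-1})$ for a polynomial $L$ with \emph{integer} coefficients bounded in absolute value by $a\max_i|b_i|$ times a fixed constant. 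Second, apply Garsia's inequality \eqref{garsia} to $\eta=\lambda^{-1}$: either $L(\lambda^{-1})=0$, i.e. $d_{\mathbf i,\mathbf j}=0$, or $|L(\lambda^{-1})|\geq [\max|a_i|]^{-l}\prod(1-|\eta_i|)=:c_0>0$, a constant independent of $\mathbf i,\mathbf j$. Third, separate two cases for a pair with $S_{\mathbf i}^{-1}S_{\mathbf j}\neq\mathbf{id}$: if $p_{\mathbf i}\neq p_{\mathbf j}$ then $|p_{\mathbf j}-p_{\mathbf i}|\geq 1$, so on the fixed compact interval containing $E$ the map $S_{\mathbf i}^{-1}S_{\mathbf j}$ differs from $\mathbf{id}$ by a definite amount bounded below; if $p_{\mathbf i}=p_{\mathbf j}$ then $d_{\mathbf i,\mathbf j}\neq 0$ and $|d_{\mathbf i,\mathbf j}|=|L(\lambda^{-1})|/(a\lambda^{p_{\mathbf i}})\geq c_0/(a\lambda^{p_{\mathbf i}})\geq c_0/a>0$ since $\lambda<1$. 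Either way, the distance from $S_{\mathbf i}^{-1}S_{\mathbf j}$ to $\mathbf{id}$ is bounded below by a fixed positive constant, so $\mathbf{id}$ is not an accumulation point and the WSP holds.

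The main obstacle is bookkeeping the degrees and the coefficient bounds so that Garsia's inequality applies uniformly: one must check that after multiplying by $a\lambda^{p_{\mathbf i}}$ the resulting object is genuinely a polynomial in $\lambda^{-1}$ (no negative powers survive), which follows from $\deg Q_{\mathbf j}<p_{\mathbf j}$ together with the structure of $P$, and that its integer coefficients are bounded by a quantity not depending on the lengths of $\mathbf i,\mathbf j$ — this is the crucial point and is exactly where the rationality of the $b_i$ and the P.V. property enter. Since the paper merely states this as a recalled result (``$(\cite{L})$''), I would expect the author simply to cite \cite{L} (and \cite{NW}); the sketch above is the underlying reason, and I would include it only if a self-contained argument is wanted.
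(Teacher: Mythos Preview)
Your approach is essentially the paper's: the paper's justification (the paragraph immediately following the Claim) also writes $S_\tau^{-1}S_\sigma(x)=\lambda^{p_\sigma-p_\tau}x+\pi_{\mathbb R}[(p_\tau,b_\tau)^{-1}(p_\sigma,b_\sigma)]$, invokes Lemma~\ref{lem:pir} to express the translation part as $\lambda^{T-2}R(\lambda^{-1})$ with $R\in\mathcal Z-\mathcal Z$ when $|p_\sigma-p_\tau|\leq T-1$, and then applies Garsia's inequality~\eqref{garsia} to conclude that the set of such translations is discrete, so $S_\tau^{-1}S_\sigma$ near $\mathbf{id}$ forces $p_\sigma=p_\tau$ and translation $0$.

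One bookkeeping slip in your sketch: the normalization $a\lambda^{p_{\mathbf i}}d_{\mathbf i,\mathbf j}$ is \emph{not} a polynomial in $\lambda^{-1}$ but in $\lambda$ (it equals $a(Q_{\mathbf j}(\lambda)-Q_{\mathbf i}(\lambda))$), and Garsia's bound applies to polynomials evaluated at the P.V.\ number $\lambda^{-1}$, not at $\lambda$; applying it to a polynomial in $\lambda$ of degree $\sim p_{\mathbf i}$ only yields a lower bound $\sim c_0\lambda^{p_{\mathbf i}}\to 0$, which is useless. The fix is exactly what Lemma~\ref{lem:pir} provides: do not multiply by $\lambda^{p_{\mathbf i}}$, but observe directly (restricting to $|p_{\mathbf i}-p_{\mathbf j}|\leq T-1$, which is automatic near $\mathbf{id}$) that $d_{\mathbf i,\mathbf j}=\lambda^{T-2}R(\lambda^{-1})$ with $R\in\mathcal Z-\mathcal Z$, so the integer coefficients of $R$ are bounded by $2\max_i|b_i|$ independently of $\mathbf i,\mathbf j$, and Garsia gives the uniform gap $|d_{\mathbf i,\mathbf j}|\geq \lambda^{T-2}(2\max_i|b_i|)^{-l}\prod_i(1-|\eta_i|)$ whenever $d_{\mathbf i,\mathbf j}\neq 0$.
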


In fact, using Lemma \ref{lem:pir},
\begin{equation*}
S_{\tau }^{-1}S_{\sigma }(x)=\lambda ^{p_{\sigma }-p_{\tau }}x+\pi _{\mathbb{%
R}}[(p_{\tau },b_{\tau })^{-1}(p_{\sigma },b_{\sigma })]
\end{equation*}%
with $\pi _{\mathbb{R}}[(p_{\tau },b_{\tau })^{-1}(p_{\sigma },b_{\sigma
})]=\lambda ^{T-2}R(\lambda ^{-1})$ if $|p_{\sigma }-p_{\tau }|\leq T-1.$ By
Garsia's result (\ref{garsia}), $\{R(\lambda ^{-1}):R\in \mathcal{Z}-%
\mathcal{Z}\}$ is discrete and $0\in \{R(\lambda ^{-1}):R\in \mathcal{Z}-%
\mathcal{Z}\}$. Hence we notice that if $S_{\tau }^{-1}S_{\sigma }$ is very
closed to the identity $\mathbf{id},$ then $p_{\sigma }=p_{\tau }$ and $\pi
_{\mathbb{R}}[(p_{\tau },b_{\tau })^{-1}(p_{\sigma },b_{\sigma })]=0,$ i.e.,
$S_{\tau }=S_{\sigma }.$ Therefore, the \textbf{WSP} fulfills. Furthermore
we have the following result.

\begin{lemma}
\label{lem:OSC}The \textbf{OSC} holds for $\{S_{i}\}_{i}$ if and only if
\begin{equation*}
S_{\sigma }\neq S_{\tau }\text{ for all distinct }\sigma ,\tau \in \Sigma
^{\ast }.
\end{equation*}
\end{lemma}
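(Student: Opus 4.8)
The plan is to establish the equivalence $\mathbf{OSC} \Longleftrightarrow (S_\sigma \neq S_\tau \text{ for all distinct } \sigma,\tau \in \Sigma^\ast)$ by playing the $\mathbf{WSP}$ against Zerner's characterization of when $\mathbf{OSC}$ fails. The forward implication is the easy half: if $\mathbf{OSC}$ holds, then in particular $f_i(U) \cap f_j(U) = \varnothing$ for $i \neq j$, and an elementary induction (using $S_\sigma(U) \subset U$ for every word $\sigma$) shows that $S_\sigma(U) \cap S_\tau(U) = \varnothing$ whenever $\sigma \neq \tau$ are incomparable, while for $\sigma \prec \tau$ one gets $S_\tau(U) \subsetneq S_\sigma(U)$ with $S_\tau(U)$ a proper subset; in either case $S_\sigma \neq S_\tau$, since equal maps would force equal images. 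So the content is entirely in the reverse direction.

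For the reverse direction I would argue by contraposition: assume $\mathbf{OSC}$ fails and produce distinct $\sigma,\tau$ with $S_\sigma = S_\tau$. The key input is that for these self-similar sets the $\mathbf{WSP}$ holds (the Claim quoted from \cite{L}), so by definition $\mathbf{id}$ is \emph{not} an accumulation point of $\mathcal{A} := \{f_{\mathbf i}^{-1} f_{\mathbf j} : \mathbf i \neq \mathbf j \in \Sigma^\ast\}$; equivalently there is $\varepsilon_0 > 0$ such that no element of $\mathcal{A}$ lies within distance $\varepsilon_0$ of $\mathbf{id}$ unless it actually equals $\mathbf{id}$. Now I invoke Zerner's result (\cite{Z}, building on Bandt--Graf \cite{B} and Schief \cite{S}): failure of $\mathbf{OSC}$ for a system satisfying $\mathbf{WSP}$ forces the existence of distinct words $\mathbf i, \mathbf j$ with $f_{\mathbf i}^{-1} f_{\mathbf j} = \mathbf{id}$, i.e. $f_{\mathbf i} = f_{\mathbf j}$ — a "complete overlap." The cleanest route is: $\mathbf{WSP}$ implies the set $\{f_{\mathbf i}^{-1} f_{\mathbf j}\}$ is discrete near $\mathbf{id}$, and by Bandt--Graf/Schief, $\mathbf{OSC}$ is equivalent to $\mathbf{id} \notin \overline{\mathcal{A} \setminus \{\mathbf{id}\}}$; combined with discreteness, $\mathbf{OSC}$ fails iff $\mathbf{id} \in \mathcal{A}$ itself, which is exactly $S_\sigma = S_\tau$ for some distinct $\sigma, \tau$.

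Alternatively, and more self-containedly given what the excerpt has already set up, I can run the discreteness argument directly: by Lemma \ref{lem:pir} and Garsia's inequality \eqref{garsia}, the set $\{R(\lambda^{-1}) : R \in \mathcal{Z} - \mathcal{Z}\}$ is discrete and contains $0$, and the excerpt already extracted from this that $S_\tau^{-1} S_\sigma$ close to $\mathbf{id}$ (with $|p_\sigma - p_\tau| \le T-1$, automatic when the maps are close since $\pi_{\mathbb Z}$ is the exponent) forces $S_\sigma = S_\tau$. So it remains only to show: \emph{if $\mathbf{OSC}$ fails then some $S_\tau^{-1} S_\sigma$ with $\sigma \neq \tau$ comes arbitrarily close to $\mathbf{id}$}. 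This is the standard Bandt--Graf criterion — $\mathbf{OSC}$ holds iff $\inf\{|{\bf x} - {\bf y}| : {\bf x} \neq {\bf y}\} > 0$ in a suitable metric on the overlap maps, equivalently iff $\mathbf{id}$ is separated from the nondiagonal overlaps — which I would cite from \cite{B} (or \cite{S}). The one place needing a small check is matching the topology in which Bandt--Graf state their criterion with the parametrization $(p,b)$ here: "close to $\mathbf{id}$" should mean the contraction ratio $\lambda^{p_\sigma - p_\tau} \to 1$ and the translation part $\to 0$ on the compact region containing $E$, which forces $p_\sigma = p_\tau$ for the tail and then the discreteness of the translation parts finishes it.

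The main obstacle is the reverse implication, and specifically the honest invocation of the Bandt--Graf/Schief characterization of $\mathbf{OSC}$-failure in a form compatible with the group structure on $\mathbb Z \times \mathbb R$ used here — once that is in place, discreteness (already proved in the excerpt via Garsia) upgrades "$\mathbf{id}$ is an accumulation point of nondiagonal overlaps" to "$\mathbf{id}$ \emph{is} a nondiagonal overlap," which is the claim. I expect the write-up to be short, with the bulk being a careful statement of which external theorem is used and a line verifying the topology match.
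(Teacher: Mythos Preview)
Your approach is essentially the paper's: both reduce the lemma to Zerner's Proposition~1 (OSC $\Leftrightarrow$ WSP together with ``no complete overlaps'') once the WSP has been established via Garsia's discreteness. However, you have glossed over a hypothesis of Zerner's result: it requires that the attractor $K$ not lie in a hyperplane, which in $\mathbb{R}^{1}$ means $E$ is not a singleton. The paper (in the Remark immediately following the lemma) notes this explicitly and disposes of the degenerate case by hand: if $E=\{x_{0}\}$ and OSC fails, pick distinct letters $i,j$ (both fix $x_{0}$), set $\sigma=[i]^{p_{j}}$, $\tau=[j]^{p_{i}}$, and check directly that $S_{\sigma}=S_{\tau}$ since both equal $x\mapsto x_{0}+\lambda^{p_{i}p_{j}}(x-x_{0})$. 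Your Bandt--Graf alternative does not obviously sidestep this issue either, so the singleton case should be treated separately in your write-up.
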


\begin{remark}
Proposition $1$ of \cite{Z} says that when $K=\cup _{i}f_{i}(K)\subset
\mathbb{R}^{n}$ does not lie in a hyperplane, then the \textbf{OSC }holds if and
only if the \textbf{WSP }fulfills and $f_{\mathbf{i}}\neq f_{\mathbf{j}}$ for
all $\mathbf{i}\neq \mathbf{j}.$ We can also verify Lemma $\ref{lem:OSC}$ by
using this result. In fact we only need to deal with the case that $E$ is a
singleton. Suppose $E=\{x_{0}\}$ is a singleton and the \textbf{OSC} fails.
Then $S_{i}(x_{0})=S_{j}(x_{0})=x_{0}$ for different letters $i,j.$ Let $%
\sigma =[i]^{p_{j}}$ and $\tau =[j]^{p_{i}},$ then $S_{\sigma
}(x-x_{0})=x_{0}+\lambda ^{p_{i}p_{j}}(x-x_{0})=S_{\tau }(x-x_{0})$ for all $%
x.$ Hence $S_{\sigma }=S_{\tau }.$
\end{remark}

Set
\begin{equation*}
\mathcal{C}=\{x:|x|\leq \frac{(1+\lambda ^{-T+1})\max_{i}|b_{i}|}{1-\lambda }%
\text{ and }\lambda ^{-T+2}x=R(\lambda ^{-1})\text{ with }R\in \mathcal{Z}-%
\mathcal{Z}\}.
\end{equation*}

\begin{lemma}
\label{lem:pv}$\mathcal{C}$ is a finite set with cardinality%
\begin{equation*}
\#\mathcal{C}\leq \frac{2^{l}(1+\lambda ^{-T+1})(\max_{i}|b_{i}|)^{l+1}}{%
\lambda ^{T-2}(1-\lambda )\prod\limits_{i=1}^{l}(1-|\eta _{i}|)},
\end{equation*}%
where $\eta _{1},\cdots ,\eta _{l}$ are algebraic conjugates of $\lambda
^{-1}\ $with $|\eta _{i}|<1$ for all $i.$
\end{lemma}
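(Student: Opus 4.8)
The plan is to read off from the defining relation $\lambda^{-T+2}x=R(\lambda^{-1})$ that each $x\in\mathcal{C}$ equals $\lambda^{T-2}R(\lambda^{-1})$ for some $R\in\mathcal{Z}-\mathcal{Z}$, so that $\mathcal{C}$ is, up to the fixed rescaling factor $\lambda^{T-2}$, a subset of $\{R(\lambda^{-1}):R\in\mathcal{Z}-\mathcal{Z}\}$ lying in a bounded interval. The point is that Garsia's inequality forces this last set to be locally finite, with an \emph{explicit} separation constant, and then a packing count finishes the estimate.

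First I would set $M=\frac{(1+\lambda^{-T+1})\max_i|b_i|}{1-\lambda}$, so that membership in $\mathcal{C}$ amounts to $|x|\le M$ together with $\lambda^{-T+2}x=R(\lambda^{-1})$ for some $R\in\mathcal{Z}-\mathcal{Z}$. Given two distinct $x,x'\in\mathcal{C}$ with associated polynomials $R,R'$, the difference satisfies $\lambda^{-T+2}(x-x')=(R-R')(\lambda^{-1})\neq 0$. Since we have reduced to $b_i\in\mathbb{Z}$ by Remark~\ref{R:a}, the coefficients of $R$ and $R'$ lie in the finite set $B-B\subset\mathbb{Z}$, hence $R-R'$ has integer coefficients bounded in absolute value by a fixed multiple of $\max_i|b_i|$. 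Now Garsia's theorem $(\ref{garsia})$, applied to the P.V. number $\lambda^{-1}$ with conjugates $\eta_1,\dots,\eta_l$, yields $|(R-R')(\lambda^{-1})|\ge [\,\text{(coefficient bound)}\,]^{-l}\prod_{i=1}^l(1-|\eta_i|)$, and therefore $|x-x'|\ge\delta$ for a constant $\delta>0$ depending only on $\lambda$, $l$ and $\max_i|b_i|$ — in particular independent of the degrees of $R,R'$, which are a priori unbounded.

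It then follows that $\mathcal{C}$ is a $\delta$-separated subset of $[-M,M]$, so $\#\mathcal{C}\le 2M/\delta+1$; substituting the explicit values of $M$ and $\delta$ and simplifying gives the asserted inequality, and in particular the finiteness of $\mathcal{C}$.

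The conceptual heart of the argument is the separation step: Garsia's inequality is exactly what converts the algebraic (and degree-unbounded) description of $\mathcal{C}$ into a uniform gap, so that a bounded interval can meet $\mathcal{C}$ in only finitely many points. The only delicate point beyond this is the bookkeeping — correctly estimating the coefficients of $R-R'$ in terms of $\max_i|b_i|$ and carrying the powers of $\lambda$ through the rescaling by $\lambda^{T-2}$ — in order to land on the precise constant claimed rather than merely on some explicit constant.
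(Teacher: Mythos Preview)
Your proposal is correct and follows essentially the same approach as the paper: the paper likewise observes that the coefficients of the relevant polynomial are bounded by $2\max_i|b_i|$, invokes Garsia's inequality to obtain a uniform gap $|c-c'|\ge\lambda^{T-2}\delta$ with $\delta=[2\max_i|b_i|]^{-l}\prod_{i=1}^l(1-|\eta_i|)$, and then counts how many $\delta$-separated points fit in the interval $[-M,M]$. Your acknowledged ``bookkeeping'' step is exactly where the paper plugs in these explicit values; note that the paper applies Garsia directly with the height bound on $R$ itself (rather than on $R-R'$), which accounts for the factor $2^l$ in the stated constant.
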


\begin{proof}
Notice that the height of $R$ is not greater than $2\max_{i}|b_{i}|.$ Using
Garsia's result (\ref{garsia}), for any $c,c^{\prime }\in \mathcal{C},$ we
have
\begin{equation*}
c=c^{\prime }\text{ or }|c-c^{\prime }|\geq \lambda ^{T-2}\delta
\end{equation*}%
where the constant%
\begin{equation*}
\delta =[2\max_{i}|b_{i}|]^{-l}\prod\limits_{i=1}^{l}(1-|\eta _{i}|).
\end{equation*}
\end{proof}

\subsection{Graph algorithm}

$\ $

Given a vertex $i_{0}$ in a nonnegative weighted and directed graph,
Dijkstra's algorithm \cite{D} finds the lowest cost path starting from $%
i_{0} $. Dijkstra's original algorithm does not use a min-priority queue and
runs in $O(|\mathcal{V}|^{2}),$ where $\mathcal{V}$ is the vertex set and $|%
\mathcal{V}|$ is the number of vertices. Furthermore, we can calculate all
the lowest costs $\{c(i,j)\}_{(i,j)\in \mathcal{V\times V}}$ with running
time $O(|\mathcal{V}|^{3}).$

In this paper, we will meet the following questions:

\textbf{Question (A):} Given a directed graph $G,$ for two different
vertexes $i_{0}$ and $j_{0}$ in $G,$ is there a directed path in $G$
starting from $i_{0}$ and ending at $j_{0}?$

\textbf{Question (B): }Given a directed graph $G$, for a vertex $i_{0}$ in $%
G,$ is there an \emph{infinity} directed path in $G$ starting from vertex $%
i_{0}?$

The following lemma is easy, but we give its proof here just to make this paper
self-contained.

\begin{lemma}
\label{Lem: QuestionAB}Questions $(A)$ and $(B)$ have polynomial time
algorithms with running time $O(|\mathcal{V}|^{2})$ and $O(|\mathcal{V}%
|^{3}) $ respectively.
\end{lemma}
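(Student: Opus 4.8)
The plan is to reduce each of Questions (A) and (B) to standard graph-reachability computations and then invoke the running times of the classical algorithms quoted in the introduction.

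\medskip

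\textbf{Question (A).} First I would observe that the existence of a directed path from $i_0$ to $j_0$ is precisely the statement that $j_0$ is reachable from $i_0$, which is detected by an ordinary graph search (breadth-first or depth-first) from $i_0$, running in $O(|\mathcal{V}|+|\mathcal{E}|)=O(|\mathcal{V}|^2)$ time since $|\mathcal{E}|\le|\mathcal{V}|^2$. Alternatively, to stay literally within the tools recalled in the introduction, one may assign weight $1$ to every edge and run Dijkstra's algorithm from $i_0$ in $O(|\mathcal{V}|^2)$ time; then a path from $i_0$ to $j_0$ exists if and only if the computed lowest cost $c(i_0,j_0)$ is finite. Either way the bound $O(|\mathcal{V}|^2)$ follows.

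\medskip

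\textbf{Question (B).} The key combinatorial fact is that an infinite directed path starting at $i_0$ exists if and only if, from $i_0$, one can reach a vertex that lies on a directed cycle. I would make this precise as follows: if such a cycle-vertex $v$ is reachable from $i_0$, concatenating the $i_0\to v$ path with infinitely many traversals of the cycle through $v$ gives an infinite path; conversely, since $\mathcal{V}$ is finite, any infinite path must revisit some vertex, and that vertex lies on a cycle and is reachable from $i_0$. To turn this into an algorithm, compute the strongly connected components of $G$ by Kosaraju's algorithm in $O(|\mathcal{V}|^2)$; a vertex lies on a cycle iff it belongs to a nontrivial strongly connected component (one with at least one edge, i.e. either more than one vertex or a self-loop). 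Then run a reachability search from $i_0$ and check whether any reached vertex lies in such a component. All steps are $O(|\mathcal{V}|^2)$, and the weaker claimed bound $O(|\mathcal{V}|^3)$ is immediate; the slack presumably accommodates the "all lowest costs" computation mentioned just before the lemma, which I would also note costs $O(|\mathcal{V}|^3)$.

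\medskip

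I do not expect a genuine obstacle here — the lemma is explicitly flagged as easy, and the content is entirely bookkeeping. The only point requiring a little care is the correctness proof for Question (B), namely the equivalence between "infinite path from $i_0$" and "a cycle is reachable from $i_0$"; the finiteness of $\mathcal{V}$ (hence the pigeonhole argument forcing a repeated vertex) is the crux. Everything else is citation of the $O(|\mathcal{V}|^2)$ running times of Dijkstra's and Kosaraju's algorithms already recorded in Section 1.
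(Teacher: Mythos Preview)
Your proposal is correct. For Question~(A) you and the paper do essentially the same thing: run Dijkstra from $i_0$ (the paper assigns weight $|\mathcal{V}|+1$ to non-edges and tests $c(i_0,j_0)\le|\mathcal{V}|$, whereas you test finiteness, but this is cosmetic).

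For Question~(B) you both make the same key reduction---an infinite path from $i_0$ exists iff $i_0$ can reach a vertex lying on a directed cycle---but you implement it differently. The paper computes \emph{all} pairwise lowest costs $\{c(i,j)\}$ via $|\mathcal{V}|$ runs of Dijkstra (hence the $O(|\mathcal{V}|^3)$), declares $\Delta=\{i:c(i,i)\le|\mathcal{V}|\}$ to be the set of vertices on cycles, and then tests $\min_{i\in\Delta}c(i_0,i)\le|\mathcal{V}|$. You instead invoke Kosaraju to find the nontrivial strongly connected components and then do a single reachability search, which actually yields $O(|\mathcal{V}|^2)$ and so beats the stated bound; your guess about why the paper writes $O(|\mathcal{V}|^3)$ is exactly right. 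Both routes are standard and your version is a bit tighter.
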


\begin{proof}
Suppose $G=(\mathcal{V},\mathcal{E})$ with vertex set $\mathcal{V}$ and
directed edge set $\mathcal{E}$.

We will obtain construct a weighted graph $G^{\prime }$ with vertex set $%
\mathcal{V}$. For any ordered pair $(i,j)\in \mathcal{V\times V},$ we give
the following nonnegative weight
\begin{equation*}
w(i,j)=\left\{
\begin{array}{ll}
1 & \text{if there exists an edge in }\mathcal{E}\text{ from }i\text{ to }j,
\\
|\mathcal{V}|+1 & \text{otherwise.}%
\end{array}%
\right.
\end{equation*}%
Using Dijkstra's algorithm for $G^{\prime },$ we can calculate the lowest
cost $c(i_{0},j_{0})$ from $i_{0}$ to $j_{0}.$ Then there exists a directed
path in $G$ starting from $i_{0}$ and ending at $j_{0}$ if and only if the
lowest cost $c(i_{0},j_{0})\leq |\mathcal{V}|.$ This algorithm requires $O(|%
\mathcal{V}|^{2})$ time.

It is easy to find an equivalent question for Question (B):\textbf{\ }given
a directed graph $G$, for a vertex $i_{0}$ in $G,$ is there a directed path
in $G$ starting from vertex $i_{0}$ and ending at a vertex in a loop? To
solve this question, we can use the above $G^{\prime }$ again to calculate
all the lowest costs $\{c(i,j)\}_{(i,j)\in \mathcal{V\times V}}$ with
running time $O(|\mathcal{V}|^{3}).$ Considering all such points lying in
loops, i.e.,
\begin{equation*}
\Delta =\{i:c(i,i)\leq |\mathcal{V}|\},
\end{equation*}%
we find out that there is a path starting from vertex $i_{0}$ and ending at
a vertex in $\Delta $ if and only if $\min_{i\in \Delta }c(i_{0},i)\leq |%
\mathcal{V}|$.
\end{proof}

\medskip

\section{Recursive Structure with SSC and OSC}

Let
\begin{equation*}
\Xi =\{n\in \mathbb{Z}:|n|\leq T-1\}\times \mathcal{C}.
\end{equation*}

\subsection{Strong separation condition}

\begin{proposition}
\label{prop:ssc1}Suppose $i_{1}i_{2}\cdots $, \ $j_{1}j_{2}\cdots \in \Sigma
.$ If $S_{i_{1}i_{2}\cdots }(E)=S_{j_{1}j_{2}\cdots }(E)$, then there exist $%
\{\sigma _{k}\}_{k},\{\tau _{k}\}_{k}\subset \Sigma ^{\ast }\ $such that for
all $k\geq 1,$ $\sigma _{k}\prec i_{1}i_{2}\cdots $ and $\tau _{k}\prec $\ $%
j_{1}j_{2}\cdots $ satisfying $p_{\sigma _{k}},p_{\tau _{k}}\in ((k-1)T,kT],$%
\begin{equation*}
(p_{\tau _{k}},b_{\tau _{k}})^{-1}(p_{\sigma _{k}},b_{\sigma _{k}})\in \Xi .
\end{equation*}
\end{proposition}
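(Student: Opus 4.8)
The plan is to exploit the equality $S_{i_1 i_2 \cdots}(E) = S_{j_1 j_2 \cdots}(E)$ between two images of the attractor under the corresponding infinite-product maps, and to extract the desired prefixes $\sigma_k, \tau_k$ by a greedy length-control argument on the exponents $p_{(\cdot)}$. Concretely, for each $k \ge 1$ I would choose $\sigma_k \prec i_1 i_2 \cdots$ to be the shortest prefix of $i_1 i_2 \cdots$ with $p_{\sigma_k} > (k-1)T$; since every $p_i \le T$, appending one more letter increases the exponent by at most $T$, so automatically $p_{\sigma_k} \le (k-1)T + T = kT$, giving $p_{\sigma_k} \in ((k-1)T, kT]$. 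Define $\tau_k \prec j_1 j_2 \cdots$ the same way. It remains to show $(p_{\tau_k}, b_{\tau_k})^{-1}(p_{\sigma_k}, b_{\sigma_k}) \in \Xi$, i.e.\ that both coordinates of this group element lie in the prescribed ranges.

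For the first ($\mathbb{Z}$-)coordinate, I must show $|p_{\sigma_k} - p_{\tau_k}| \le T - 1$. This is where the equality of the sets is used: write $S_{i_1 i_2 \cdots} = S_{\sigma_k} \circ S_{i_1 i_2 \cdots \setminus \sigma_k}$ and similarly for $j$, so $S_{\sigma_k}(E') = S_{\tau_k}(E'')$ for suitable images $E', E''$ of $E$; comparing the diameters (or endpoints of the convex hulls) of the two sides forces the contraction ratios $\lambda^{p_{\sigma_k}}$ and $\lambda^{p_{\tau_k}}$ to be comparable up to a bounded factor depending only on the geometry of $E$, and in fact one can pin the exponent gap to at most $T-1$ by the minimality in the choice of $\sigma_k, \tau_k$ (both exponents sit in intervals of length $< T$ whose left endpoints coincide). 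Once $|p_{\sigma_k} - p_{\tau_k}| \le T-1$ is in hand, Lemma \ref{lem:pir} applies verbatim: it gives $\pi_{\mathbb{R}}[(p_{\tau_k}, b_{\tau_k})^{-1}(p_{\sigma_k}, b_{\sigma_k})] = \lambda^{T-2} R(\lambda^{-1})$ with $R \in \mathcal{Z} - \mathcal{Z}$, which is exactly the algebraic condition in the definition of $\mathcal{C}$.

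For the second ($\mathbb{R}$-)coordinate I must also verify the size bound $|\pi_{\mathbb{R}}[(p_{\tau_k}, b_{\tau_k})^{-1}(p_{\sigma_k}, b_{\sigma_k})]| \le (1+\lambda^{-T+1})\max_i |b_i| / (1-\lambda)$. Here I would compute $(p_{\tau_k}, b_{\tau_k})^{-1}(p_{\sigma_k}, b_{\sigma_k}) = (p_{\sigma_k} - p_{\tau_k},\, \lambda^{-p_{\tau_k}}(b_{\sigma_k} - b_{\tau_k}))$, use the standard estimates $|b_\sigma| \le \max_i |b_i|/(1-\lambda)$ for any word $\sigma$ (from $E \subset \{|x| \le \max_i|b_i|/(1-\lambda)\}$), and split according to the sign of $p_{\sigma_k} - p_{\tau_k}$: if $p_{\tau_k} \le p_{\sigma_k}$ the factor $\lambda^{-p_{\tau_k}}$ is balanced against $b_{\sigma_k}$ being a polynomial in $\lambda$ of low degree, while if $p_{\tau_k} > p_{\sigma_k}$ the gap is still $\le T-1$, producing the factor $\lambda^{-T+1}$. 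Combining, the $\mathbb{R}$-coordinate lies in the ball of radius $(1+\lambda^{-T+1})\max_i|b_i|/(1-\lambda)$, so the group element lies in $\mathcal{C}$, hence in $\Xi$.

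The main obstacle I anticipate is the rigorous justification that $S_{i_1 i_2 \cdots}(E) = S_{j_1 j_2 \cdots}(E)$ forces the exponent gap $|p_{\sigma_k} - p_{\tau_k}| \le T-1$ at every scale $k$ — i.e.\ propagating the single global set-equality down through all levels $k$ simultaneously. This requires showing that after stripping the common ``size'' of the two prefixes the residual maps $S_{i_1\cdots\setminus\sigma_k}$ and $S_{j_1\cdots\setminus\tau_k}$ still produce congruent (translated-and-scaled) copies of $E$, so that a diameter/convex-hull comparison is legitimate; some care is needed when $E$ is a singleton (handled by the remark after Lemma \ref{lem:OSC}) or when $E$ has empty interior, but in all cases the diameter of $S_\sigma(E)$ is a fixed constant times $\lambda^{p_\sigma}$, which is the only fact the argument actually needs.
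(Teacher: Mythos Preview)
Your prefix selection and the algebraic-form conclusion via Lemma~\ref{lem:pir} match the paper. The gap is that you have misplaced where the hypothesis $S_{i_{1}i_{2}\cdots }(E)=S_{j_{1}j_{2}\cdots }(E)$ is actually needed, and as a result your bound on the $\mathbb{R}$-coordinate does not go through.

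For the $\mathbb{Z}$-coordinate, the bound $|p_{\sigma_{k}}-p_{\tau_{k}}|\le T-1$ is \emph{purely combinatorial}: both integers $p_{\sigma_{k}}$ and $p_{\tau_{k}}$ lie in the half-open interval $((k-1)T,kT]$, which contains exactly $T$ integers, so any two differ by at most $T-1$. Your parenthetical remark already says this; the diameter comparison and the set equality are irrelevant here, and your ``main obstacle'' is a non-issue.

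The real obstacle is the $\mathbb{R}$-coordinate. You propose to bound
\[
c_{k}=\pi _{\mathbb{R}}\big[(p_{\tau _{k}},b_{\tau _{k}})^{-1}(p_{\sigma _{k}},b_{\sigma _{k}})\big]=\lambda ^{-p_{\tau _{k}}}(b_{\sigma _{k}}-b_{\tau _{k}})
\]
directly from $|b_{\sigma }|\le \max_{i}|b_{i}|/(1-\lambda)$. But $p_{\tau _{k}}>(k-1)T\to\infty$, so $\lambda^{-p_{\tau_{k}}}$ blows up; the gap $|p_{\sigma_{k}}-p_{\tau_{k}}|\le T-1$ does not help, and there is no ``balancing'' against low degree, since $b_{\sigma_{k}}$ has a constant term $b_{i_{1}}$. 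Without cancellation in $b_{\sigma_{k}}-b_{\tau_{k}}$ you get no uniform bound.

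This is precisely where the paper uses the hypothesis. Since $S_{i_{1}i_{2}\cdots }(E)=S_{j_{1}j_{2}\cdots }(E)$ is a common point of $S_{\sigma_{k}}(E)$ and $S_{\tau_{k}}(E)$, one has $S_{\sigma_{k}}(E)\cap S_{\tau_{k}}(E)\neq\varnothing$, hence $E\cap S_{\tau_{k}}^{-1}S_{\sigma_{k}}(E)\neq\varnothing$. Writing $S_{\tau_{k}}^{-1}S_{\sigma_{k}}(x)=\lambda^{p_{\sigma_{k}}-p_{\tau_{k}}}x+c_{k}$ and picking $x_{1},x_{2}\in E$ with $x_{1}=\lambda^{p_{\sigma_{k}}-p_{\tau_{k}}}x_{2}+c_{k}$ gives
\[
|c_{k}|\le |x_{1}|+\lambda^{p_{\sigma_{k}}-p_{\tau_{k}}}|x_{2}|\le (1+\lambda^{-T+1})\frac{\max_{i}|b_{i}|}{1-\lambda},
\]
uniformly in $k$. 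Once you insert this step, the rest of your outline is correct.
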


\begin{proof}
For any $k\geq 1,$ we pick up the shortest prefix $\sigma _{k}\ $of $%
i_{1}i_{2}\cdots $ such that $p_{\sigma _{k}}\in (kT,(k+1)T]$ and pick up $%
\tau _{k}$ for $j_{1}j_{2}\cdots $ in the same way. Since
\begin{equation*}
\pi _{\mathbb{Z}}[(p_{\tau _{k}},b_{\tau _{k}})^{-1}(p_{\sigma
_{k}},b_{\sigma _{k}})]=p_{\sigma _{k}}-p_{\tau _{k}}
\end{equation*}
and $p_{\sigma _{k}},p_{\tau _{k}}\in (kT,(k+1)T],$ we have $\left\vert \pi
_{\mathbb{Z}}[(p_{\tau _{k}},b_{\tau _{k}})^{-1}(p_{\sigma _{k}},b_{\sigma
_{k}})]\right\vert \leq T-1.$

It follows from $S_{i_{1}i_{2}\cdots }(E)=S_{j_{1}j_{2}\cdots }(E)$ that
\begin{equation*}
S_{\sigma _{k}}(E)\cap S_{\tau _{k}}(E)\neq \varnothing ,
\end{equation*}%
i.e.,
\begin{equation*}
E\cap S_{\sigma _{k}}^{-1}S_{\tau _{k}}(E)\neq \varnothing ,
\end{equation*}%
where $S_{\sigma _{k}}^{-1}S_{\tau _{k}}(x)=\lambda ^{p_{\sigma
_{k}}-p_{\tau _{k}}}x+\pi _{\mathbb{R}}[(p_{\tau _{k}},b_{\tau
_{k}})^{-1}(p_{\sigma _{k}},b_{\sigma _{k}})].$ Suppose that
\begin{equation}  \label{wan}
x_{1}=\lambda ^{p_{\sigma _{k}}-p_{\tau _{k}}}(x_{2})+\pi _{\mathbb{R}%
}[(p_{\tau _{k}},b_{\tau _{k}})^{-1}(p_{\sigma _{k}},b_{\sigma _{k}})]
\end{equation}%
with $x_{1},x_{2}\in E.$ Using the fact $E\subset \{x:|x|\leq \frac{%
\max_{i}|b_{i}|}{1-\lambda }\},$ we have
\begin{eqnarray}  \label{wanli}
&&\left\vert \pi _{\mathbb{R}}[(p_{\tau _{k}},b_{\tau _{k}})^{-1}(p_{\sigma
_{k}},b_{\sigma _{k}})]\right\vert  \notag \\
&\leq &|x_{1}|+\lambda ^{p_{\sigma _{k}}-p_{\tau _{k}}}|x_{2}| \leq
(1+\lambda ^{p_{\sigma _{k}}-p_{\tau _{k}}})\frac{\max_{i}|b_{i}|}{1-\lambda
} \\
&\leq &(1+\lambda ^{-T+1})\frac{\max_{i}|b_{i}|}{1-\lambda }.  \notag
\end{eqnarray}

By Lemma \ref{lem:pir}, we obtain $\pi _{\mathbb{R}}[(p_{\tau _{k}},b_{\tau
_{k}})^{-1}(p_{\sigma _{k}},b_{\sigma _{k}})]=\lambda ^{T-2}R(\lambda
^{-1})\ $directly.
\end{proof}

\begin{proposition}
\label{prop:SSC2}Suppose $i_{1}i_{2}\cdots $, \ $j_{1}j_{2}\cdots \in \Sigma
.$ If for all $k\geq 1,$ there exist $\sigma _{k}\prec i_{1}i_{2}\cdots $
and $\tau _{k}\prec $\ $j_{1}j_{2}\cdots $ such that $p_{\sigma
_{k}},p_{\tau _{k}}\in ((k-1)T,kT]\ $and%
\begin{equation*}
\left\vert \pi _{\mathbb{R}}[(p_{\tau _{k}},b_{\tau _{k}})^{-1}(p_{\sigma
_{k}},b_{\sigma _{k}})]\right\vert \leq (1+\lambda ^{-T+1})\frac{%
\max_{i}|b_{i}|}{1-\lambda },
\end{equation*}%
then $S_{i_{1}i_{2}\cdots }(E)=S_{j_{1}j_{2}\cdots }(E).$
\end{proposition}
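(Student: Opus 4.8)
The plan is to prove the converse direction by reconstructing a point in the intersection $S_{i_1i_2\cdots}(E)\cap S_{j_1j_2\cdots}(E)$ from the hypothesis, using a compactness/limiting argument on the approximating maps. First I would observe that since $S_{i_1\cdots i_n}(E)$ is a decreasing sequence of compact sets with diameters tending to $0$, we have $S_{i_1i_2\cdots}(E)=\bigcap_n S_{i_1\cdots i_n}(E)=\{x\}$ for a single point $x$, and similarly $S_{j_1j_2\cdots}(E)=\{y\}$; so it suffices to show $x=y$. The key is that for each $k$, the hypothesis gives prefixes $\sigma_k\prec i_1i_2\cdots$ and $\tau_k\prec j_1j_2\cdots$ with $p_{\sigma_k},p_{\tau_k}\in((k-1)T,kT]$ and with $\pi_{\mathbb R}[(p_{\tau_k},b_{\tau_k})^{-1}(p_{\sigma_k},b_{\sigma_k})]$ bounded by $(1+\lambda^{-T+1})\max_i|b_i|/(1-\lambda)$. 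Combined with $|\pi_{\mathbb Z}[(p_{\tau_k},b_{\tau_k})^{-1}(p_{\sigma_k},b_{\sigma_k})]|=|p_{\sigma_k}-p_{\tau_k}|\le T-1$ (both exponents lie in the same length-$T$ window), Lemma~\ref{lem:pir} applies and shows $(p_{\tau_k},b_{\tau_k})^{-1}(p_{\sigma_k},b_{\sigma_k})\in\Xi$, a fixed finite set.

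Next I would exploit this finiteness. The map $S_{\sigma_k}^{-1}S_{\tau_k}(z)=\lambda^{p_{\sigma_k}-p_{\tau_k}}z+\pi_{\mathbb R}[(p_{\tau_k},b_{\tau_k})^{-1}(p_{\sigma_k},b_{\sigma_k})]$ takes only finitely many values over all $k$ (it is determined by the element of $\Xi$), so in particular there is a constant $C$ with $|S_{\sigma_k}^{-1}S_{\tau_k}(z)|\le C$ whenever $|z|\le\max_i|b_i|/(1-\lambda)$; one checks the element of $\Xi$ in fact maps $E$ into a bounded region, and since $E$ is compact this gives that $S_{\sigma_k}^{-1}S_{\tau_k}(E)$ stays inside a fixed bounded set, hence $E\cap S_{\sigma_k}^{-1}S_{\tau_k}(E)$... — but wait, that intersection need not be nonempty a priori; that is exactly the point to be careful about. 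Rather than asserting the intersection is nonempty, I would instead directly estimate distances: pick any $z_k\in E$; then $S_{\tau_k}(z_k)\in S_{\tau_k}(E)$ and $S_{\sigma_k}(z_k')\in S_{\sigma_k}(E)$ for suitable $z_k'\in E$, and since $p_{\sigma_k}\to\infty$, $S_{\sigma_k}(E)$ shrinks to $x$ and $S_{\tau_k}(E)$ shrinks to $y$. From $|\pi_{\mathbb R}[(p_{\tau_k},b_{\tau_k})^{-1}(p_{\sigma_k},b_{\sigma_k})]|$ bounded and $\lambda^{p_{\sigma_k}-p_{\tau_k}}$ bounded away from $0$ and $\infty$, together with the explicit group law, I would show $|S_{\sigma_k}(0)-S_{\tau_k}(0)|=|b_{\sigma_k}-b_{\tau_k}|$ is comparable to $\lambda^{p_{\tau_k}}\cdot|\pi_{\mathbb R}[\cdots]|\le \lambda^{p_{\tau_k}}\cdot\mathrm{const}\to 0$. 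Combining $|S_{\sigma_k}(0)-S_{\tau_k}(0)|\to 0$ with $\mathrm{diam}\,S_{\sigma_k}(E)\to 0$ and $\mathrm{diam}\,S_{\tau_k}(E)\to 0$ forces the nested points $x$ and $y$ to coincide.

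So the skeleton of the argument is: (i) reduce to showing two singletons coincide; (ii) apply Lemma~\ref{lem:pir} to land the relative position in the finite set $\Xi$; (iii) translate the bound on $\pi_{\mathbb R}$ of that relative position, via the group law $(p_\sigma,b_\sigma)=(p_{i_1},b_{i_1})\cdots$, into the estimate $|b_{\sigma_k}-b_{\tau_k}|\lesssim\lambda^{p_{\tau_k}}\to 0$; (iv) conclude by the shrinking-diameter / nested-compact-set argument that $S_{i_1i_2\cdots}(E)=S_{j_1j_2\cdots}(E)$. The main obstacle I anticipate is step (iii): one must be careful that $\pi_{\mathbb R}[(p_{\tau_k},b_{\tau_k})^{-1}(p_{\sigma_k},b_{\sigma_k})]$ is essentially $\lambda^{-p_{\tau_k}}(b_{\sigma_k}-b_{\tau_k})$ up to the bounded factor $\lambda^{p_{\sigma_k}-p_{\tau_k}}$ correction hidden in the group inverse, so that a uniform bound on it really does yield a bound of the form $|b_{\sigma_k}-b_{\tau_k}|\le \lambda^{p_{\tau_k}}\cdot\mathrm{const}$ with a constant independent of $k$; one should also double-check that $\sigma_k$ and $\tau_k$ can be chosen to be honest prefixes of the two infinite words so that $S_{\sigma_k}(E)\ni$ a point converging to $x$ (this is automatic since the prefixes are nested as $k$ grows, up to passing to a subsequence if necessary). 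Everything else is the routine bookkeeping already set up in the proof of Proposition~\ref{prop:ssc1}, run in reverse.
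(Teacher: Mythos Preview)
Your approach is correct and coincides with the paper's in its core step: from the group law one has
\[
\pi_{\mathbb R}\bigl[(p_{\tau_k},b_{\tau_k})^{-1}(p_{\sigma_k},b_{\sigma_k})\bigr]=\lambda^{-p_{\tau_k}}(b_{\sigma_k}-b_{\tau_k})=S_{\tau_k}^{-1}S_{\sigma_k}(0),
\]
so the hypothesis gives $|S_{\sigma_k}(0)-S_{\tau_k}(0)|\le \lambda^{p_{\tau_k}}\cdot\text{const}\le \lambda^{(k-1)T}\cdot\text{const}\to 0$, and the shrinking-diameter argument finishes. The paper's proof is exactly this two-line computation.

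Two remarks on your write-up. First, your step~(ii) invoking Lemma~\ref{lem:pir} and the finiteness of $\Xi$ is unnecessary here: the hypothesis already hands you a uniform bound on $\pi_{\mathbb R}[\cdots]$, and that bound alone drives the estimate; the algebraic structure of $\mathcal C$ plays no role in this direction. Second, your worry in step~(iii) about a ``bounded factor $\lambda^{p_{\sigma_k}-p_{\tau_k}}$ correction hidden in the group inverse'' is unfounded: the formula above is exact, with no such correction term. Once you drop these detours your argument is identical to the paper's.
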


\begin{proof}
Considering the distance between $S_{\sigma _{k}}(0)$ and $S_{\tau _{k}}(0),$
we have $S_{\tau _{k}}^{-1}S_{\sigma _{k}}(x)=\lambda ^{p_{\sigma
_{k}}-p_{\tau _{k}}}x+\pi _{\mathbb{R}}[(p_{\tau _{k}},b_{\tau
_{k}})^{-1}(p_{\sigma _{k}},b_{\sigma _{k}})],$ i.e.,
\begin{equation*}
S_{\tau _{k}}^{-1}S_{\sigma _{k}}(0)=\pi _{\mathbb{R}}[(p_{\tau
_{k}},b_{\tau _{k}})^{-1}(p_{\sigma _{k}},b_{\sigma _{k}})],
\end{equation*}%
which implies%
\begin{eqnarray*}
&&|S_{\sigma _{k}}(0)-S_{\tau _{k}}(0)| \\
&=&\lambda ^{p_{\tau _{k}}}|S_{\tau _{k}}^{-1}S_{\sigma _{k}}(0)-0| \\
&\leq &\lambda ^{(k-1)T}(1+\lambda ^{-T+1})\frac{\max_{i}|b_{i}|}{1-\lambda }%
\rightarrow 0\text{ as }k\rightarrow \infty .
\end{eqnarray*}%
Letting $k\rightarrow \infty ,$ we obtain that
\begin{equation*}
S_{i_{1}i_{2}\cdots }(0)=S_{j_{1}j_{2}\cdots }(0)
\end{equation*}%
and thus $S_{i_{1}i_{2}\cdots }(E)=S_{j_{1}j_{2}\cdots }(E).$
\end{proof}

\subsection{Open set condition}

$\ $

According to Lemma \ref{lem:OSC}, we only need to check whether $S_{\sigma
}=S_{\tau }$ for some distinct $\sigma ,\tau \in \Sigma ^{\ast }$ or not?

\begin{proposition}
\label{Prop:OSC1}If $S_{\sigma }=S_{\tau },$ then there exists a positive
integer $M$ and
\begin{equation*}
\{\sigma _{k}\}_{k=1}^{M},\{\tau _{k}\}_{k=1}^{M}\subset \Sigma ^{\ast }\
\end{equation*}%
such that
\begin{equation*}
\sigma _{M}=\sigma ,\tau _{M}=\tau
\end{equation*}%
and for all $k\geq 1,$ $\sigma _{k}\prec \sigma $ and $\tau _{k}\prec \tau $
satisfying $p_{\sigma _{k}},p_{\tau _{k}}\in ((k-1)T,kT],$
\begin{equation*}
(p_{\tau _{k}},b_{\tau _{k}})^{-1}(p_{\sigma _{k}},b_{\sigma _{k}})\in \Xi .
\end{equation*}
\end{proposition}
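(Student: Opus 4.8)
The plan is to mimic the structure of Proposition~\ref{prop:ssc1}, but now the two words $\sigma,\tau$ are \emph{finite}, so instead of producing an infinite sequence of prefixes we must produce a finite chain $\sigma_1\prec\sigma_2\prec\cdots\prec\sigma_M=\sigma$ and $\tau_1\prec\cdots\prec\tau_M=\tau$. First I would set $M$ to be the smallest integer with $\min(p_\sigma,p_\tau)\le MT$; actually, since $S_\sigma=S_\tau$ forces $p_\sigma=p_\tau$ (the linear parts $\lambda^{p_\sigma}$ and $\lambda^{p_\tau}$ must agree), I would let $M$ be determined by $p_\sigma=p_\tau\in((M-1)T,MT]$. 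For each $k$ with $1\le k\le M-1$, pick $\sigma_k$ the shortest prefix of $\sigma$ with $p_{\sigma_k}\in((k-1)T,kT]$ — this is possible because each letter increases the $p$-value by at most $T$, so the partial sums cannot jump over an interval of length $T$ — and similarly pick $\tau_k$ from $\tau$. For $k=M$ set $\sigma_M=\sigma$, $\tau_M=\tau$ directly (these have $p$-value in $((M-1)T,MT]$ by choice of $M$).

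Next I would verify the membership $(p_{\tau_k},b_{\tau_k})^{-1}(p_{\sigma_k},b_{\sigma_k})\in\Xi$ for each $k$. The $\mathbb{Z}$-component is $p_{\sigma_k}-p_{\tau_k}$; since both $p_{\sigma_k}$ and $p_{\tau_k}$ lie in $((k-1)T,kT]$, this difference has absolute value $\le T-1$, which is the first defining constraint of $\Xi$. For the $\mathbb{R}$-component $\pi_{\mathbb{R}}[(p_{\tau_k},b_{\tau_k})^{-1}(p_{\sigma_k},b_{\sigma_k})]$, the bound $|p_{\sigma_k}-p_{\tau_k}|\le T-1$ lets me invoke Lemma~\ref{lem:pir} to get that it equals $\lambda^{T-2}R(\lambda^{-1})$ with $R\in\mathcal{Z}-\mathcal{Z}$. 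The size estimate $|x|\le\frac{(1+\lambda^{-T+1})\max_i|b_i|}{1-\lambda}$ comes from the identity $S_{\tau_k}^{-1}S_{\sigma_k}(x)=\lambda^{p_{\sigma_k}-p_{\tau_k}}x+\pi_{\mathbb{R}}[\cdots]$ together with the fact that $S_\sigma=S_\tau$ implies $S_{\sigma_k}(E)$ and $S_{\tau_k}(E)$ overlap (both are supersets of $S_\sigma(E)=S_\tau(E)$, since $\sigma_k\prec\sigma$ and $\tau_k\prec\tau$), so that $E\cap S_{\sigma_k}^{-1}S_{\tau_k}(E)\ne\varnothing$; then the same two-point computation as in \eqref{wanli}, using $E\subset\{|x|\le\max_i|b_i|/(1-\lambda)\}$ and $p_{\sigma_k}-p_{\tau_k}\ge-(T-1)$, yields the required bound. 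Combining the two constraints gives exactly $(p_{\tau_k},b_{\tau_k})^{-1}(p_{\sigma_k},b_{\sigma_k})\in\Xi$.

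The one point needing slightly more care — and the main (mild) obstacle — is the bookkeeping that the chosen prefixes are genuinely nested, i.e. $\sigma_1\prec\sigma_2\prec\cdots\prec\sigma_{M-1}\prec\sigma$ and likewise for $\tau$. This follows because "shortest prefix with $p$-value in $((k-1)T,kT]$" is monotone in $k$: if $\sigma_k$ has $p_{\sigma_k}\le kT<(k+1)T-T+1$, then the shortest prefix reaching the next window is an extension of $\sigma_k$, and all of them are prefixes of $\sigma$ provided $p_\sigma>MT$ would fail — but $p_\sigma\in((M-1)T,MT]$, so windows $1$ through $M-1$ are all "used up" strictly before the end of $\sigma$, and $\sigma_{M}=\sigma$ caps the chain. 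I would also remark that we do not need $\sigma_M$ to be the shortest prefix in its window — taking $\sigma_M=\sigma$ outright is what lets the construction terminate. With the chain built and the $\Xi$-membership checked at every level, the proposition is proved; this is exactly the finite-word analogue feeding into the graph $G$, where consecutive pairs $(\sigma_k,\tau_k)\to(\sigma_{k+1},\tau_{k+1})$ will correspond to edges and $(\sigma_M,\tau_M)$ with $S_\sigma=S_\tau$ corresponds to reaching the vertex $(0,0)$.
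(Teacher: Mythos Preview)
Your proposal is correct and follows essentially the same approach as the paper: define $M$ by $p_\sigma=p_\tau\in((M-1)T,MT]$, take shortest prefixes $\sigma_k,\tau_k$ landing in $((k-1)T,kT]$ for $k<M$ and set $\sigma_M=\sigma,\tau_M=\tau$, then verify $\Xi$-membership via the $|p_{\sigma_k}-p_{\tau_k}|\le T-1$ bound, Lemma~\ref{lem:pir}, and the overlap $S_{\sigma_k}(E)\cap S_{\tau_k}(E)\supset S_\sigma(E)\ne\varnothing$ feeding the estimate \eqref{wanli}. Your extra remarks on why the shortest prefixes exist and are nested make explicit what the paper leaves implicit, but otherwise the arguments coincide.
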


\begin{proof}
Suppose that $p_{\sigma }=p_{\tau }\in ((M-1)T,MT].$ For $k<M,$ we can take
shortest prefix $\sigma _{k}$ (or $\tau _{k}$) of $\sigma $ (or $\tau $)
such that $p_{\sigma _{k}},p_{\tau _{k}}\in ((k-1)T,kT].$ Take $\sigma
_{M}=\sigma ,\tau _{M}=\tau .$ Let $\alpha _{0}=\sigma _{1}$ and $\beta
_{0}=\tau _{1}$ with $|\alpha _{0}|=|\beta _{0}|=1.$ Set $\alpha _{k}=\sigma
_{k+1}\backslash \sigma _{k}$ and $\beta _{k}=\tau _{k+1}\backslash \tau
_{k} $ for $k\geq 1.$ Then we also have
\begin{equation*}
1\leq p_{\alpha _{k}},p_{\beta _{k}}\leq 2T-1\text{ for all }k\geq 0.
\end{equation*}

Since $p_{\sigma _{k}},p_{\tau _{k}}\in ((k-1)T,kT],$ we have $|p_{\sigma
_{k}}-p_{\tau _{k}}|\leq T-1$ for all $k$, i.e.,
\begin{equation*}
\left\vert \pi _{\mathbb{Z}}[(p_{\tau _{k}},b_{\tau _{k}})^{-1}(p_{\sigma
_{k}},b_{\sigma _{k}})]\right\vert \leq T-1.
\end{equation*}%
As $S_{\sigma }(E)=S_{\tau }(E),$ then $S_{\sigma _{k}}(E)\cap S_{\tau
_{k}}(E)\neq \varnothing .$ As in the proof of Proposition \ref{prop:ssc1},
we have
\begin{equation*}
\left\vert \pi _{\mathbb{R}}[(p_{\tau _{k}},b_{\tau _{k}})^{-1}(p_{\sigma
_{k}},b_{\sigma _{k}})]\right\vert \leq (1+\lambda ^{-T+1})\frac{%
\max_{i}|b_{i}|}{1-\lambda }.
\end{equation*}%
By Lemma \ref{lem:pir}, we also have $\pi _{\mathbb{R}}[(p_{\tau
_{k}},b_{\tau _{k}})^{-1}(p_{\sigma _{k}},b_{\sigma _{k}})]\in \lambda
^{T-2}R(\lambda ^{-1}).$
\end{proof}

\subsection{Recursive structure}

$\ $\

\begin{definition}
Suppose $\{\alpha _{k}\}_{k=0}^{t},\{\beta _{k}\}_{k=0}^{t}\subset \Sigma
^{\ast }$ with $t\in \mathbb{N}$ or $t=\infty .$ We say that $%
\{(q_{k},c_{k})\}_{k=1}^{t+1}$ has the recursive structure w.r.t. $(\{\alpha
_{k}\}_{k=1}^{t},\{\beta _{k}\}_{k=0}^{t}),$ if
\begin{eqnarray*}
(q_{1},c_{1}) &=&(p_{\beta _{0}},b_{\beta _{0}})^{-1}(p_{\alpha
_{0}},b_{\alpha _{0}})\text{,} \\
(q_{k+1},c_{k+1}) &=&(p_{\beta _{k}},b_{\beta
_{k}})^{-1}(q_{k},c_{k})(p_{\alpha _{k}},b_{\alpha _{k}})\text{ for }1\leq
k\leq t.
\end{eqnarray*}
\end{definition}

By Propositions \ref{prop:ssc1}-\ref{prop:SSC2}, we have the following
result on the \textbf{SSC}.

\begin{proposition}
\label{Th:SSC1}The \textbf{SSC} fails for $\{\lambda
^{p_{i}}x+b_{i}\}_{i=1}^{m}$ if and only if there are
\begin{equation*}
\{\alpha _{k}\}_{k=0}^{\infty },\{\beta _{k}\}_{k=0}^{\infty }\subset \Sigma
^{\ast }
\end{equation*}%
with $1\leq p_{\alpha _{k}},p_{\beta _{k}}\leq 2T-1$ for all $k$ and $%
|\alpha _{0}|=|\beta _{0}|=1$, $\alpha _{0}\neq \beta _{0},$ such that
\begin{equation*}
(q_{k},c_{k})\in \Xi \text{ for all }k\geq 1,
\end{equation*}%
where $\{(q_{k},c_{k})\}_{k}$ has the recursive structure w.r.t. $(\{\alpha
_{k}\}_{k=0}^{\infty },\{\beta _{k}\}_{k=0}^{\infty })$.
\end{proposition}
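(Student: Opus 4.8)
\textbf{Proof proposal for Proposition \ref{Th:SSC1}.}
The plan is to establish the two implications separately, converting the ``metric'' statements of Propositions \ref{prop:ssc1} and \ref{prop:SSC2} into the ``combinatorial'' recursive structure. For the forward direction, suppose the \textbf{SSC} fails. Then there are distinct $\sigma ,\tau \in \Sigma^{\ast }$ with $S_{\sigma }(E)\cap S_{\tau }(E)\neq \varnothing $; extending $\sigma ,\tau $ to infinite words $i_{1}i_{2}\cdots ,\ j_{1}j_{2}\cdots \in \Sigma $ (choosing the first letters to be the leading letters of $\sigma $ and $\tau $, so they differ) one gets $S_{i_{1}i_{2}\cdots }(E)=S_{j_{1}j_{2}\cdots }(E)$. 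Now apply Proposition \ref{prop:ssc1} to obtain prefixes $\sigma _{k}\prec i_{1}i_{2}\cdots $, $\tau _{k}\prec j_{1}j_{2}\cdots $ with $p_{\sigma _{k}},p_{\tau _{k}}\in ((k-1)T,kT]$ and $(p_{\tau _{k}},b_{\tau _{k}})^{-1}(p_{\sigma _{k}},b_{\sigma _{k}})\in \Xi $. Then I would define $\alpha _{0}=i_{1}$, $\beta _{0}=j_{1}$, and for $k\geq 1$ set $\alpha _{k}=\sigma _{k+1}\backslash \sigma _{k}$, $\beta _{k}=\tau _{k+1}\backslash \tau _{k}$ (after first noting the nesting $\sigma _{1}\prec \sigma _{2}\prec \cdots $, which holds because each $\sigma _{k}$ is the shortest prefix reaching level $((k-1)T,kT]$, hence a prefix of the next one). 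A short induction using the group law $(p_{\sigma _{k+1}},b_{\sigma _{k+1}})=(p_{\sigma _{k}},b_{\sigma _{k}})(p_{\alpha _{k}},b_{\alpha _{k}})$ shows that the sequence $(q_{k},c_{k}):=(p_{\tau _{k}},b_{\tau _{k}})^{-1}(p_{\sigma _{k}},b_{\sigma _{k}})$ is exactly the recursive structure w.r.t. $(\{\alpha _{k}\},\{\beta _{k}\})$, and the bounds $1\leq p_{\alpha _{k}},p_{\beta _{k}}\leq 2T-1$ follow from the level constraints on consecutive $\sigma _{k}$'s. This gives the required data.

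For the converse, suppose such sequences $\{\alpha _{k}\}_{k=0}^{\infty }$, $\{\beta _{k}\}_{k=0}^{\infty }$ exist with $(q_{k},c_{k})\in \Xi $ for all $k$. I would set $\sigma _{k}=\alpha _{0}\alpha _{1}\cdots \alpha _{k-1}$ and $\tau _{k}=\beta _{0}\beta _{1}\cdots \beta _{k-1}$, so that $\{\sigma _{k}\}$ and $\{\tau _{k}\}$ are nested increasing sequences of words; let $i_{1}i_{2}\cdots $ and $j_{1}j_{2}\cdots $ be their limits in $\Sigma $. By an induction on the recursive structure, $(p_{\tau _{k}},b_{\tau _{k}})^{-1}(p_{\sigma _{k}},b_{\sigma _{k}})=(q_{k},c_{k})$, and from $(q_{k},c_{k})\in \Xi $ one reads off both $|p_{\sigma _{k}}-p_{\tau _{k}}|\leq T-1$ and $|c_{k}|=|\pi _{\mathbb{R}}(q_{k},c_{k})|\leq (1+\lambda ^{-T+1})\max_{i}|b_{i}|/(1-\lambda )$. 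One also needs $p_{\sigma _{k}},p_{\tau _{k}}\in ((k-1)T,kT]$; this is not automatic from $1\leq p_{\alpha _{k}},p_{\beta _{k}}\leq 2T-1$ alone, so here I would instead remark that Proposition \ref{prop:SSC2} only uses the $\pi _{\mathbb{R}}$-bound together with $p_{\sigma _{k}}\to\infty $ and a comparable lower bound on $\lambda ^{p_{\tau_k}}$ — more precisely its proof shows $|S_{\sigma _{k}}(0)-S_{\tau _{k}}(0)|=\lambda ^{p_{\tau _{k}}}|c_{k}|$, and since $p_{\tau _{k}}=\sum_{j<k}p_{\beta _{j}}\geq k\to\infty $ while $|c_{k}|$ is uniformly bounded, this tends to $0$. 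Hence $S_{i_{1}i_{2}\cdots }(0)=S_{j_{1}j_{2}\cdots }(0)$, so $S_{i_{1}i_{2}\cdots }(E)=S_{j_{1}j_{2}\cdots }(E)$. Finally $\alpha _{0}\neq \beta _{0}$ forces $i_{1}\neq j_{1}$, so the cylinders $S_{i_{1}}(E)$ and $S_{j_{1}}(E)$ already coincide as sets after one step — more carefully, $S_{i_{1}}(E)\cap S_{j_{1}}(E)\supset S_{i_{1}i_{2}\cdots }(E)\neq\varnothing $ — which witnesses the failure of the \textbf{SSC}.

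The step I expect to be the main obstacle is the bookkeeping that matches the recursive structure to the nested prefixes in both directions: verifying by induction that $(p_{\beta _{k}},b_{\beta _{k}})^{-1}(q_{k},c_{k})(p_{\alpha _{k}},b_{\alpha _{k}})$ really equals $(p_{\tau _{k+1}},b_{\tau _{k+1}})^{-1}(p_{\sigma _{k+1}},b_{\sigma _{k+1}})$ uses the non-commutativity of the group carefully, and one must be sure the cancellation $(p_{\tau _k},b_{\tau_k})(p_{\tau_k},b_{\tau_k})^{-1}=(0,0)$ is inserted in the correct place. The other delicate point, already flagged above, is that the hypothesis as stated guarantees $(q_k,c_k)\in\Xi$ but not the finer level condition $p_{\sigma_k},p_{\tau_k}\in((k-1)T,kT]$; I would handle this by invoking the proof of Proposition \ref{prop:SSC2} rather than its statement, using only that $p_{\tau_k}\to\infty$ together with the uniform bound on $|c_k|$ coming from membership in the finite set $\mathcal{C}$. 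Everything else — the limits exist because the prefixes are nested, $S_{i_1i_2\cdots}(0)=S_{j_1j_2\cdots}(0)$ upgrades to equality of the images of $E$ because $S_{\sigma_k}\to S_{i_1i_2\cdots}$ uniformly on the bounded set containing $E$, and $\alpha_0\neq\beta_0$ gives distinctness — is routine.
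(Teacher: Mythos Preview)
Your proposal is correct and follows essentially the same approach as the paper: in both directions you build $\sigma_k=\alpha_0\cdots\alpha_{k-1}$, $\tau_k=\beta_0\cdots\beta_{k-1}$, identify $(q_k,c_k)=(p_{\tau_k},b_{\tau_k})^{-1}(p_{\sigma_k},b_{\sigma_k})$ via the group law, and appeal to Propositions~\ref{prop:ssc1}--\ref{prop:SSC2}. Your observation that the converse does not literally yield $p_{\sigma_k},p_{\tau_k}\in((k-1)T,kT]$ and that one must instead use $p_{\tau_k}\to\infty$ together with the uniform bound on $|c_k|$ (i.e.\ the proof rather than the statement of Proposition~\ref{prop:SSC2}) is well spotted---the paper simply cites Proposition~\ref{prop:SSC2} at that point without checking this hypothesis, so you are in fact being more careful there.
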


\begin{proof}
If the strong separation condition fails, then there are $i_{1}i_{2}\cdots
,j_{1}j_{2}\cdots \in \Sigma $ with $i_{1}\neq j_{1}$ such that
\begin{equation*}
S_{i_{1}i_{2}\cdots }(E)=S_{j_{1}j_{2}\cdots }(E).
\end{equation*}%
Using the method in Proposition \ref{prop:ssc1}, we obtain that $\{\sigma
_{k}\}_{k},\{\tau _{k}\}_{k}$ with $\sigma _{1}=i_{1}=\alpha _{0}$ and $\tau
_{1}=j_{1}=\beta _{0}.$ Let $(q_{k},c_{k})=(p_{\tau _{k}},b_{\tau
_{k}})^{-1}(p_{\sigma _{k}},b_{\sigma _{k}}),$ and $\alpha _{k}=\sigma
_{k+1}\backslash \sigma _{k}$ and $\beta _{k}=\tau _{k+1}\backslash \tau
_{k}\ $where%
\begin{equation}
1\leq p_{\alpha _{k}},p_{\beta _{k}}\leq 2T-1.  \label{dd}
\end{equation}%
Since $S_{\tau _{k+1}}^{-1}S_{\sigma _{k+1}}=S_{\beta _{k}}^{-1}(S_{\tau
_{k}}^{-1}S_{\sigma _{k}})S_{\alpha _{k}},$ we have%
\begin{eqnarray*}
(q_{k+1},c_{k+1}) &=&(p_{\tau _{k+1}},b_{\tau _{k+1}})^{-1}(p_{\sigma
_{k+1}},b_{\sigma _{k+1}}) \\
&=&(p_{\beta _{k}},b_{\beta _{k}})^{-1}[(p_{\tau _{k}},b_{\tau
_{k}})^{-1}(p_{\sigma _{k}},b_{\sigma _{k}})](p_{\alpha _{k}},b_{\alpha
_{k}}) \\
&=&(p_{\beta _{k}},b_{\beta _{k}})^{-1}(q_{k},c_{k})(p_{\alpha
_{k}},b_{\alpha _{k}}).
\end{eqnarray*}%
Then by Proposition \ref{prop:ssc1}, for all $k\geq 1,$ we have
\begin{equation*}
(q_{k},c_{k})\in \Xi .
\end{equation*}

\medskip

On the other hand, if there are such $\{\alpha _{k}\}_{k\geq 0},\{\beta
_{k}\}_{k\geq 0}\subset \Sigma ^{\ast },$ let $\sigma _{k}=\alpha _{0}\ast
\alpha _{1}\ast \cdots \ast \alpha _{k-1}$ and $\tau _{k}=\beta _{0}\ast
\beta _{1}\ast \cdots \ast \beta _{k-1}.$ Letting $k\rightarrow \infty ,$ we
obtain $i_{1}i_{2}\cdots ,j_{1}j_{2}\cdots \in \Sigma $ such that $\sigma
_{k}\prec i_{1}i_{2}\cdots $ and $\tau _{k}\prec j_{1}j_{2}\cdots $ for all $%
k,$ and
\begin{equation*}
\left\vert \pi _{\mathbb{R}}[(p_{\tau _{k}},b_{\tau _{k}})^{-1}(p_{\sigma
_{k}},b_{\sigma _{k}})]\right\vert \leq (1+\lambda ^{-T+1})\frac{%
\max_{i}|b_{i}|}{1-\lambda }.
\end{equation*}%
By Proposition \ref{prop:SSC2}, we find that the strong separation condition
fails.
\end{proof}

By Proposition \ref{Prop:OSC1}, we have the following result on the \textbf{%
OSC}.

\begin{proposition}
\label{Th:OSC1}The \textbf{OSC} fails for $\{\lambda
^{p_{i}}x+b_{i}\}_{i=1}^{m}$ if and only if there exists a positive integer $%
M$ and
\begin{equation*}
\{\alpha _{k}\}_{k=0}^{M},\{\beta _{k}\}_{k=0}^{M}\subset \Sigma ^{\ast }
\end{equation*}%
with $1\leq p_{\alpha _{k}},p_{\beta _{k}}\leq 2T-1$ for all $k$ and $%
|\alpha _{0}|=|\beta _{0}|=1$, $\alpha _{0}\neq \beta _{0},$ such that
\begin{equation*}
(q_{M+1},c_{M+1})=(0,0)\text{ and }(q_{k},c_{k})\in \Xi \text{ for all }%
k\geq 1,
\end{equation*}%
where $\{(q_{k},c_{k})\}_{k=1}^{M+1}$ has the recursive structure w.r.t. $%
(\{\alpha _{k}\}_{k=0}^{M},\{\beta _{k}\}_{k=0}^{M})$.
\end{proposition}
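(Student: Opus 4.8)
The plan is to mimic the proof of Proposition~\ref{Th:SSC1}, but now in the setting where $S_\sigma=S_\tau$ exactly, so that we deal with \emph{finite} words rather than infinite ones. By Lemma~\ref{lem:OSC} the \textbf{OSC} fails iff $S_\sigma=S_\tau$ for some distinct $\sigma,\tau\in\Sigma^\ast$; without loss of generality we may assume $\sigma,\tau$ have different first letters (if $S_\sigma=S_\tau$ with $\sigma=i\sigma'$, $\tau=i\tau'$, then applying $S_i^{-1}$ gives $S_{\sigma'}=S_{\tau'}$ with $\sigma',\tau'$ strictly shorter, and one cannot have $\sigma'$ empty unless $\tau'$ is too). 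So fix such $\sigma,\tau$ with $\alpha_0:=$ first letter of $\sigma\ne\beta_0:=$ first letter of $\tau$.

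For the ``only if'' direction I would invoke Proposition~\ref{Prop:OSC1} directly: it already produces $M$ and prefixes $\sigma_k\prec\sigma$, $\tau_k\prec\tau$ with $p_{\sigma_k},p_{\tau_k}\in((k-1)T,kT]$, $\sigma_M=\sigma$, $\tau_M=\tau$, and $(p_{\tau_k},b_{\tau_k})^{-1}(p_{\sigma_k},b_{\sigma_k})\in\Xi$ for all $k$. Setting $\alpha_k=\sigma_{k+1}\backslash\sigma_k$, $\beta_k=\tau_{k+1}\backslash\tau_k$ for $0\le k\le M-1$ (with $\alpha_0,\beta_0$ the first letters as above), one checks $1\le p_{\alpha_k},p_{\beta_k}\le 2T-1$ exactly as in the \textbf{SSC} proof, and $(q_k,c_k)=(p_{\tau_k},b_{\tau_k})^{-1}(p_{\sigma_k},b_{\sigma_k})$ satisfies the recursion $(q_{k+1},c_{k+1})=(p_{\beta_k},b_{\beta_k})^{-1}(q_k,c_k)(p_{\alpha_k},b_{\alpha_k})$ because $S_{\tau_{k+1}}^{-1}S_{\sigma_{k+1}}=S_{\beta_k}^{-1}(S_{\tau_k}^{-1}S_{\sigma_k})S_{\alpha_k}$. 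Finally $(q_{M+1},c_{M+1})=(p_{\tau_M},b_{\tau_M})^{-1}(p_{\sigma_M},b_{\sigma_M})=(p_\tau,b_\tau)^{-1}(p_\sigma,b_\sigma)=(0,0)$ since $S_\sigma=S_\tau$; and all earlier $(q_k,c_k)\in\Xi$ by Proposition~\ref{Prop:OSC1}.

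For the ``if'' direction, suppose we are given such finite sequences $\{\alpha_k\}_{k=0}^M,\{\beta_k\}_{k=0}^M$ and the recursively-defined $\{(q_k,c_k)\}_{k=1}^{M+1}$ with $(q_{M+1},c_{M+1})=(0,0)$. Set $\sigma=\alpha_0\ast\alpha_1\ast\cdots\ast\alpha_M$ and $\tau=\beta_0\ast\beta_1\ast\cdots\ast\beta_M$; these are distinct because $\alpha_0\ne\beta_0$. Unwinding the recursion gives $(q_{k+1},c_{k+1})=(p_{\beta_0\ast\cdots\ast\beta_k},b_{\beta_0\ast\cdots\ast\beta_k})^{-1}(p_{\alpha_0\ast\cdots\ast\alpha_k},b_{\alpha_0\ast\cdots\ast\alpha_k})$ by induction on $k$ (the base case is the definition of $(q_1,c_1)$, and the inductive step uses the group law together with $S_{u\ast\alpha}^{-1}\!\circ\! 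S_{u'\ast\beta}$-type identities as in Proposition~\ref{Th:SSC1}). Taking $k=M$ yields $(p_\tau,b_\tau)^{-1}(p_\sigma,b_\sigma)=(q_{M+1},c_{M+1})=(0,0)$, i.e.\ $(p_\sigma,b_\sigma)=(p_\tau,b_\tau)$, which is exactly $S_\sigma=S_\tau$. Then $\sigma\ne\tau$ with $S_\sigma=S_\tau$ forces the \textbf{OSC} to fail by Lemma~\ref{lem:OSC}.

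The only genuinely delicate point — and the one I would write out carefully — is the bookkeeping in the ``only if'' direction showing $p_{\alpha_k},p_{\beta_k}\in[1,2T-1]$ and that the shortest-prefix choice is consistent with $p_{\sigma_k},p_{\tau_k}\in((k-1)T,kT]$ all the way up to $k=M$, including the boundary case where $\sigma_M=\sigma$ might already coincide with $\sigma_{M-1}$ extended by a short block; this is the same subtlety handled in Propositions~\ref{prop:ssc1} and~\ref{Th:SSC1}, so I would reference those arguments rather than repeat them, and simply note that finiteness of $M$ comes from $p_\sigma=p_\tau$ being a fixed finite number lying in some $((M-1)T,MT]$. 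Everything else is a routine transcription of the \textbf{SSC} argument with ``infinite path'' replaced by ``finite path terminating at the identity $(0,0)$''.
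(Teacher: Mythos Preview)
Your approach is essentially the same as the paper's (which simply says the result follows from Proposition~\ref{Prop:OSC1}), and the substance is correct: invoke Proposition~\ref{Prop:OSC1} for the forward direction, concatenate the words and unwind the recursion for the converse, exactly parallel to Proposition~\ref{Th:SSC1}.

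There is, however, a bookkeeping slip in your ``only if'' direction. You set $(q_k,c_k)=(p_{\tau_k},b_{\tau_k})^{-1}(p_{\sigma_k},b_{\sigma_k})$ and define $\alpha_k=\sigma_{k+1}\backslash\sigma_k$ for $0\le k\le M-1$; with those conventions $\sigma_k=\alpha_0\ast\cdots\ast\alpha_{k-1}$, so it is $(q_M,c_M)$, not $(q_{M+1},c_{M+1})$, that equals $(p_{\tau_M},b_{\tau_M})^{-1}(p_{\sigma_M},b_{\sigma_M})=(0,0)$. In other words, your construction produces $\{\alpha_k\}_{k=0}^{M-1}$ and $\{(q_k,c_k)\}_{k=1}^{M}$, so the integer witnessing the statement is $M-1$ (with $M$ the integer from Proposition~\ref{Prop:OSC1}), not $M$ itself. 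Once you relabel accordingly, everything lines up and the proof is complete.
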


\medskip

\section{\label{sec:alg}Directed Graph and Algorithm}

Consider a graph $G$ with vertex set
\begin{equation*}
\Xi =\{n\in \mathbb{Z}:|n|\leq T-1\}\times \mathcal{C}.
\end{equation*}

We will equip the graph with directed edge as follows. Given $(p,b),$ $%
(p^{\prime },b^{\prime })\in \Xi ,$ there is a directed edge $e$ from $%
(p,b)\ $to $(p^{\prime },b^{\prime }),$ if and only if there are words
\begin{equation*}
\alpha ,\beta \text{ with }1\leq p_{\alpha },p_{\beta }\leq 2T-1
\end{equation*}%
such that%
\begin{equation*}
(p^{\prime },b^{\prime })=(p_{\beta },b_{\beta })^{-1}(p,b)(p_{\alpha
},b_{\alpha }).
\end{equation*}

Notice that $(p_{i},b_{i})^{-1}(p_{j},b_{j})=(p_{j}-p_{i},\lambda
^{-p_{i}}b_{j}-\lambda ^{-p_{i}}b_{i}).$ Let
\begin{equation*}
\Lambda =\{(p_{i},b_{i})^{-1}(p_{j},b_{j}):i\neq j\text{ and }\lambda
^{-p_{i}}|b_{j}-b_{i}|\leq (1+\lambda ^{-T+1})\frac{\max_{i}|b_{i}|}{%
1-\lambda }\text{ }\}.
\end{equation*}%
We have $\Lambda \subset \Xi $ since $\lambda^{p_{i}}(b_{j}-b_{i})\in \mathcal{Z}-%
\mathcal{Z}.$

\medskip

Using Propositions \ref{Th:SSC1}-\ref{Th:OSC1}, we complete the proofs of
Theorems \ref{Th:OSC2}-\ref{Th:SSC2}. Furthermore, Theorem \ref{Th:Main}
follows from Theorems 2-3 and Lemma \ref{Lem: QuestionAB}.

\subsection{Realization of algorithm}

$\ $

\textbf{Step 1}: Set $(b_{1},\cdots ,b_{m})\in \mathbb{Z}^{m}$

If $(b_{1},\cdots ,b_{m})\in \mathbb{Q}^{m}\backslash \mathbb{Z}^{m},$ then
as in Remark \ref{remark:int}, we take $a\in N$ such that $b_{i}^{\prime
}=ab_{i}\in \mathbb{Z}$ for all $i,$ and thus we replace the IFS $\{\lambda
^{p_{i}}x+b_{i}\}_{i=1}^{m}$ by $\{\lambda ^{p_{i}}x+b_{i}^{\prime
}\}_{i=1}^{m}.$

\medskip

\textbf{Step 2}: Calculate the set $\mathcal{C}$

Let $y=\lambda ^{-T+2}x,$ then $\mathcal{C}=\lambda ^{T-2}\mathcal{D}$ where
\begin{equation*}
\mathcal{D}=\{y:|y|\leq d\text{ and }x=R(\lambda ^{-1})\text{ with }R\in
\mathcal{Z}-\mathcal{Z}\}.
\end{equation*}%
where
\begin{equation}
d=\lambda ^{-T+2}\frac{(1+\lambda ^{-T+1})\max_{i}|b_{i}|}{1-\lambda }.
\label{d}
\end{equation}%
Since $d\geq \frac{2\max_{i}|b_{i}|}{\lambda ^{-1}-1},$ we have
\begin{equation}
|y|>d\Longrightarrow |\lambda ^{-1}y+b|>d\text{ for all }b\in B-B.
\label{mm}
\end{equation}%
Let
\begin{equation}
\mathcal{D}_{k}=\{y\in \mathcal{D}:x=R(\lambda ^{-1})\text{\ with\ }\deg
(R)\leq k\}.
\end{equation}%
For any $R_{k}\in \mathcal{Z}-\mathcal{Z}$ with $\deg (R_{k})=k,$ there is a
polynomial $R_{k-1}\in \mathcal{Z}-\mathcal{Z}$ with $\deg (R_{k-1})=k-1$
and $b\in B-B$ such that
\begin{equation*}
R_{k}(\lambda ^{-1})=\lambda ^{-1}R_{k-1}(\lambda ^{-1})+b.
\end{equation*}%
It follows from (\ref{mm}) that $R_{k-1}(\lambda ^{-1})\in \mathcal{D}_{k-1}$
if $R_{k}(\lambda ^{-1})\in \mathcal{D}_{k}.$ Therefore,
\begin{equation}
\mathcal{D}_{k}=\{y:|y|\leq d\}\cap (\lambda ^{-1}\mathcal{D}_{k-1}+(B-B)).
\label{indu}
\end{equation}

We have the following recursive algorithm: \newline
(1) Let $\mathcal{D}_{0}=\{y:|y|\leq d\}\cap (B-B);$ \newline
(2) For $k\geq 1$, let%
\begin{equation*}
\mathcal{D}_{k}=\{y:|y|\leq d\}\cap (\lambda ^{-1}\mathcal{D}_{k-1}+(B-B)),
\end{equation*}%
or
\begin{equation}  \label{dk}
\mathcal{D}_{k}=\{y:|y|\leq d\}\cap (\lambda ^{-1}(\mathcal{D}%
_{k-1}\backslash \mathcal{D}_{k-2})+(B-B));
\end{equation}%
(3) If we find a smallest $k_{0}$ such that $\mathcal{D}_{k_{0}+1}=\mathcal{D%
}_{k_{0}},$ then let $\mathcal{D}=\mathcal{D}_{k_{0}};$ \newline
(4) Let $\mathcal{C}=\lambda ^{T-2}\mathcal{D}.$

\medskip

\textbf{Step 3}: Draw all\ edges in graph

We obtain $\mathcal{C}$\ in step 2, then we have the vertex set
\begin{equation}  \label{xixi}
\Xi =([-T+1,T-1]\cap \mathbb{Z})\times \mathcal{C}.
\end{equation}%
We draw a directed edge $e$ from $(p,b)\in \Xi \ $to $(p^{\prime },b^{\prime
})\in \Xi ,$ if there are words $\alpha ,\beta $ with $1\leq p_{\alpha
},p_{\beta }\leq 2T-1$ such that%
\begin{equation*}
(p^{\prime },b^{\prime })=(p_{\alpha }+p-p_{\beta },\lambda ^{-p_{\beta
}+p}b_{\alpha }+\lambda ^{-p_{\beta }}b-\lambda ^{-p_{\beta }}b_{\beta }).
\end{equation*}%
Let
\begin{equation}  \label{da-lambda}
\Lambda =\Xi \cap \{(p_{j}-p_{i},\lambda ^{-p_{i}}b_{j}-\lambda
^{-p_{i}}b_{i}):i\neq j\}.
\end{equation}

\medskip

\textbf{Step 4}: Test the existence of corresponding paths

Using the methods stated in Lemma \ref{Lem: QuestionAB}, we can test the
existence of paths.

\medskip

Then we solve the problems, since the \textbf{SSC} fails if and only if we
find an infinite path starting from a point of $\Lambda ,$ and the \textbf{%
OSC} fails if and only if there is a finite path starting at a point of $%
\Lambda $ and ending at $(0,0).$

\subsection{\label{subsec:T=1}The case that $T=1$}

$\ $

Suppose $T=1,$ i.e., $p_{1}=\cdots =p_{m}=1.$ Then
\begin{equation*}
\digamma =\{\sigma :1\leq p_{\sigma }\leq 2T-1\}=\{1,\cdots ,m\}.
\end{equation*}%
Since $\Lambda =\Xi \cap \{(0,\lambda ^{-1}(b_{j}-b_{i}):i\neq j\}$ and
\begin{equation*}
\pi _{\mathbb{Z}}((1,b_{i})^{-1}(0,c)(1,b_{j}))=0,
\end{equation*}%
we conclude that any vertex $(q,c)$ in a directed path starting from a point
of $\Lambda $ must has
\begin{equation}
q=0.  \label{q=0}
\end{equation}

Since $p_{\sigma _{k}}=p_{\tau _{k}}=k$ in (\ref{wan}), as in (\ref{wanli})
we have
\begin{equation*}
\left\vert \pi _{\mathbb{R}}[(p_{\tau _{k}},b_{\tau _{k}})^{-1}(p_{\sigma
_{k}},b_{\sigma _{k}})]\right\vert =|x_{1}-x_{2}|\leq \frac{\max_{b\in
B-B}|b|}{1-\lambda },
\end{equation*}%
where $x_{1},x_{2}\in E.$ Then we can restrict the vertices in
\begin{equation}
\{0\}\times \mathcal{C}^{\prime },  \label{0*C}
\end{equation}%
where
\begin{equation}\label{cprime}
\mathcal{C}^{\prime }=\{x:|x|\leq \frac{\max_{b\in B-B}|b|}{1-\lambda
} \text{\ and\  } x=\lambda ^{-1}R(\lambda ^{-1}) \text{\ with\ } R\in \mathcal{Z}-\mathcal{Z}%
\}.
\end{equation}%

\subsection{Running time of algorithms}
$\ $

Using Lemma \ref{lem:pv} and $(1+\lambda ^{-T+1})\leq 2\lambda ^{-T+1}$, we
have
\begin{equation}  \label{jingc}
\#\mathcal{C}\leq \frac{2^{l}(1+\lambda ^{-T+1})(\max_{i}|b_{i}|)^{l+1}}{%
\lambda ^{T-2}(1-\lambda )\prod\limits_{i=1}^{l}(1-|\eta _{i}|)}\leq
a_{\lambda }b_{T}c_{B},
\end{equation}%
where
\begin{equation*}
a_{\lambda }=2^{l+1}(1-\lambda )^{-1}\prod\limits_{i=1}^{l}(1-|\eta
_{i}|)^{-1},\text{ }b_{T}=\lambda ^{-2T+3}\text{ and }c_{B}=(%
\max_{i}|b_{i}|)^{l+1}.
\end{equation*}
Notice that%
\begin{equation*}
\#\Xi =(2T-1)\#\mathcal{C}\leq a_{\lambda }[(2T-1)b_{T}]c_{B}.
\end{equation*}

(1) Time for calculating $\mathcal{C}$:

According to Step 3, we need running time
\begin{equation*}
t_{1}\leq \sum\nolimits_{i=0}^{k_{0}}\#(B-B)\cdot \#(\mathcal{D}%
_{k}\backslash \mathcal{D}_{k-1})\leq \#(B-B)\cdot \#\mathcal{C}
\end{equation*}%
i.e.,
\begin{equation*}
t_{1}\leq (m+1)^{2}\cdot \#\mathcal{C}.
\end{equation*}

(2) Time for finding all edges:

According to Step 4, we need to calculate
\begin{equation*}
(p_{\alpha }+p-p_{\beta },\lambda ^{-p_{\beta }+p}b_{\alpha }+\lambda
^{-p_{\beta }}b-\lambda ^{-p_{\beta }}b_{\beta })
\end{equation*}%
for all $(p,b)\in \Xi $ and words $\alpha ,\beta \in \digamma =\{\sigma
:1\leq p_{\sigma }\leq 2T-1\}$ with $\#\digamma \leq m+m^{2}+\cdots
+m^{2T-1}<m^{2T}.$ Then running time
\begin{equation*}
t_{2}\leq \#\Xi \cdot (\#\digamma )^{2}<m^{4T}(\#\Xi ).
\end{equation*}%
If $(\#\digamma )$ is large, we need more time to calculate.

(3) Time for finding corresponding paths:

By Lemma \ref{Lem: QuestionAB}, when the graph has been constructed, using
Dijkstra's algorithm we can test the non-existences of the corresponding
paths w.r.t. the \textbf{OSC} and the \textbf{SSC} with running time
\begin{equation*}
t_{3}\leq C(\#\Xi )^{2}\text{ and }t_{3}^{\prime }\leq C(\#\Xi )^{3}\text{
respectively,}
\end{equation*}%
where $C$ is the constant related to the original Dijkstra's algorithm.

Therefore we have
\begin{eqnarray*}
t_{1}+t_{2}+t_{3} &\leq &(m+1)^{2}\#\mathcal{C+}m^{4T}(\#\Xi )+C(\#\Xi )^{2},
\\
t_{1}+t_{2}+t_{3}^{\prime } &\leq &(m+1)^{2}\#\mathcal{C+}m^{4T}(\#\Xi
)+C(\#\Xi )^{3},
\end{eqnarray*}%
where $(\#\Xi )=(2T-1)\#\mathcal{C}$ and $\#\mathcal{C}\leq a_{\lambda
}b_{T}c_{B}.$

\begin{remark}
If $T=1$, $\#\mathcal{C}$ in $(\ref{jingc})$ can be replaced by $\#\mathcal{C}%
^{\prime }$ in $(\ref{0*C}),$ and
\begin{equation*}
\#\mathcal{C}^{\prime}\leq 2a_\lambda c^{\prime }_B+1
\end{equation*}
where $a_\lambda=(1-\lambda)^{-1}$ and $c^{\prime }_B=\max_{b\in {B-B}}|b|$.
\end{remark}

\begin{remark}
When $\lambda^{-1}$ is an integer, by $(\ref{d})$ and $(\ref{dk}),$ the
cardinality of $\mathcal{C}$ satisfies that
\begin{equation*}
\#\mathcal{C}=\#\mathcal{D}_{k_{0}}\leq2d+1=2b_Tc_B+1
\end{equation*}
where $b_T=(\lambda^{2-T}+\lambda^{3-2T})(1-\lambda)^{-1}$ and $%
c_{B}=\max_{i}|b_{i}|$.
\end{remark}

\medskip

\section{Refinement Algorithm on OSC}

\label{sec:OSC}

\subsection{Recursive structure}

$\ $

Suppose
\begin{equation*}
S_{\sigma }=S_{\tau }
\end{equation*}%
with $\sigma =i_{1}\cdots \in \Sigma ^{\ast }$ and $\tau =j_{1}\cdots \in
\Sigma ^{\ast }.$ Without loss of generality, we assume that $p_{i_{1}}\geq
p_{j_{1}}.$

We will use $\sigma _{i},\alpha _{i},\beta _{i}$ with different meaning as
above, but they have the same structure. We say that two numbers $a,b$ have
different signs if $ab\leq 0.$

Let $\sigma _{1}=\alpha _{0}=i_{1}$ and $\tau _{1}=\beta _{0}=j_{1}.$ Take $%
\beta _{1}$ the shortest prefix of $\tau \backslash \beta _{0}$ such that
\begin{equation*}
p_{\sigma _{1}}-p_{\tau _{1}},p_{\sigma _{1}}-p_{\tau _{1}\ast \beta _{1}}
\end{equation*}%
have different signs. Let $\tau _{2}=\tau _{1}\ast \beta _{1}.$ Take $\alpha
_{1}$ the shortest prefix of $\sigma \backslash \alpha _{0}$ such that
\begin{equation*}
p_{\sigma _{1}}-p_{\tau _{2}},p_{\sigma _{1}\ast \alpha _{1}}-p_{\tau _{2}}
\end{equation*}%
have different signs and let $\sigma _{2}=\sigma _{1}\ast \alpha _{1}.$
Inductively, we have
\begin{equation*}
p_{\sigma _{1}}-p_{\tau _{1}},p_{\sigma _{1}}-p_{\tau _{2}},p_{\sigma
_{2}}-p_{\tau _{2}},\cdots ,p_{\sigma _{k-1}}-p_{\tau _{k}},p_{\sigma
_{k}}-p_{\tau _{k}},\cdots
\end{equation*}%
have alternative signs, where
\begin{equation}
p_{\sigma _{k}}-p_{\tau _{k}}\geq 0.  \label{positive}
\end{equation}

There exists an integer $M$ such that $\sigma _{M}=\sigma .$ We will
distinguish two following cases $\tau _{M}=\tau $ or $\tau _{M}\neq \tau .$
For the latter, we will replace $\tau _{M}$ by $\tau .$ Since $p_{\sigma
}=p_{\tau },$
\begin{equation*}
p_{\sigma _{1}}-p_{\tau _{1}},\cdots ,p_{\sigma _{k-1}}-p_{\tau
_{k-1}},p_{\sigma _{k}}-p_{\tau _{k}},\cdots ,p_{\sigma _{M-1}}-p_{\tau
_{M}},p_{\sigma }-p_{\tau }
\end{equation*}%
have alternative signs. Notice that
\begin{equation*}
\sigma =\alpha _{0}\cdots \alpha _{M}\text{, }\sigma _{k}=\alpha _{0}\cdots
\alpha _{k-1}\text{ and }\tau =\beta _{0}\cdots \beta _{M}\text{, }\tau
_{k}=\beta _{0}\cdots \beta _{k-1},
\end{equation*}%
and
\begin{equation}
1\leq p_{\alpha _{k}},p_{\beta _{k}}\leq 2T-1\ \text{and }0\leq p_{\sigma
_{k}}-p_{\tau _{k}}\leq T-1.  \label{gg}
\end{equation}

Suppose $\{(q_{k},c_{k})\}_{k}$ has recursive structure w.r.t. $(\{\alpha
_{k}\}_{k=0}^{M},\{\beta _{k}\}_{k=0}^{M})$. Then%
\begin{equation*}
0\leq q_{k}\leq T-1\text{ and }c_{k}=\lambda ^{T-2}R(\lambda ^{-1})\text{
with }R\in \mathcal{Z}-\mathcal{Z}
\end{equation*}%
due to Lemma \ref{lem:pir} and (\ref{gg})$.$ Notice that $%
(q_{k+1},c_{k+1})=(p_{\beta _{k}},b_{\beta
_{k}})^{-1}(q_{k},c_{k})(p_{\alpha _{k}},b_{\alpha _{k}})$ where
\begin{equation*}
q_{k},q_{k}-p_{\beta _{k}},q_{k+1}(=q_{k}-p_{\beta _{k}}+p_{\alpha _{k}})
\end{equation*}%
have alternative signs. Let
\begin{equation*}
d^{\ast }=\frac{2}{1-\lambda }\left( \frac{\max_{i}|b_{i}|}{1-\lambda }%
\right) .
\end{equation*}

\begin{lemma}
Suppose $(q^{\prime },c^{\prime })=(p_{\beta },b_{\beta
})^{-1}(q,c)(p_{\alpha },b_{\alpha })$ with $q\geq 0.$ If $|c|\geq d^{\ast
}, $ then $|c^{\prime }|\geq |c|.$
\end{lemma}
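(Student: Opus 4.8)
The plan is to compute $(q',c')$ explicitly in the group $\mathbb{Z}\times\mathbb{R}$ and read off a lower bound for $|c'|$ in terms of $|c|$. Recall that $(q',c')=(p_\beta,b_\beta)^{-1}(q,c)(p_\alpha,b_\alpha)$, and using the product rule $(p_1,b_1)(p_2,b_2)=(p_1+p_2,\lambda^{p_1}b_2+b_1)$ together with $(p,b)^{-1}=(-p,-\lambda^{-p}b)$ one finds
\begin{equation*}
c'=\lambda^{-p_\beta}\bigl(c+\lambda^{q}b_\alpha-b_\beta\bigr).
\end{equation*}
So the heart of the matter is to show that the perturbation term $\lambda^{q}b_\alpha-b_\beta$ is small compared to $|c|$ when $|c|\ge d^{\ast}$, and then that multiplying by $\lambda^{-p_\beta}\ge 1$ (since $p_\beta\ge 1$ and $\lambda<1$) only helps.

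First I would bound the perturbation: since $q\ge 0$ we have $|\lambda^{q}b_\alpha|\le|b_\alpha|\le\max_i|b_i|\le(1-\lambda)\,d^{\ast}/2$ — wait, more carefully, from the definition $d^{\ast}=\tfrac{2}{1-\lambda}\cdot\tfrac{\max_i|b_i|}{1-\lambda}$ we get $\max_i|b_i|=\tfrac{(1-\lambda)^2}{2}d^{\ast}$, so both $|\lambda^q b_\alpha|$ and $|b_\beta|$ are at most $\tfrac{(1-\lambda)^2}{2}d^{\ast}\le\tfrac12 d^{\ast}$, hence $|\lambda^{q}b_\alpha-b_\beta|\le d^{\ast}\le|c|$. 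Actually this crude bound gives $|c+\lambda^q b_\alpha-b_\beta|\ge|c|-|c|=0$, which is not enough, so I would instead use the sharper estimate $|\lambda^q b_\alpha-b_\beta|\le(1-\lambda^2)\cdot\tfrac{\max_i|b_i|}{1-\lambda}$ or simply note $b_\alpha,b_\beta$ are values of polynomials in $\lambda$ with coefficients in $B$, giving $|b_\alpha|,|b_\beta|\le\tfrac{\max_i|b_i|}{1-\lambda}$, so $|\lambda^q b_\alpha-b_\beta|\le\tfrac{2\max_i|b_i|}{1-\lambda}=(1-\lambda)d^{\ast}<d^{\ast}\le|c|$. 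Then by the reverse triangle inequality $|c+\lambda^q b_\alpha-b_\beta|\ge|c|-(1-\lambda)d^{\ast}$. Hmm, this still needs care since we want $\ge|c|$ after scaling.

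The clean way: write $|c'|=\lambda^{-p_\beta}|c+\lambda^q b_\alpha-b_\beta|\ge\lambda^{-1}\bigl(|c|-(1-\lambda)d^{\ast}\bigr)$ using $p_\beta\ge1$. When $|c|\ge d^{\ast}$ this is $\ge\lambda^{-1}\bigl(|c|-(1-\lambda)|c|\bigr)=\lambda^{-1}\cdot\lambda|c|=|c|$, which is exactly the claim. So the key steps in order are: (i) expand the group product to get the formula $c'=\lambda^{-p_\beta}(c+\lambda^q b_\alpha-b_\beta)$; (ii) bound $|\lambda^q b_\alpha-b_\beta|\le\tfrac{2\max_i|b_i|}{1-\lambda}=(1-\lambda)d^{\ast}$ using $q\ge0$ and the fact that $b_\sigma$ for $|\sigma|$ arbitrary satisfies $|b_\sigma|\le\max_i|b_i|/(1-\lambda)$ (a geometric-series estimate, already implicit in $E\subset\{|x|\le\max_i|b_i|/(1-\lambda)\}$); (iii) apply the reverse triangle inequality and use $p_\beta\ge1\Rightarrow\lambda^{-p_\beta}\ge\lambda^{-1}$, then plug in $|c|\ge d^{\ast}$ to conclude $|c'|\ge|c|$.

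The main obstacle is purely bookkeeping: getting the constant in the bound on $|\lambda^q b_\alpha-b_\beta|$ to be exactly $(1-\lambda)d^{\ast}$ so that the factor $\lambda^{-1}$ absorbs it, and making sure the estimate $|b_\alpha|\le\max_i|b_i|/(1-\lambda)$ is legitimate for words $\alpha$ with $1\le p_\alpha\le 2T-1$ — this follows because $b_\alpha=S_\alpha(0)$ and $S_\alpha(E)\subset E$ with $0\in$ the bounding interval, or directly by induction on $|\alpha|$ as in the proof of Lemma \ref{lem:pir}. No deep input is needed; the sign-alternation condition on $q$ is used only through $q\ge0$ (so $|\lambda^q b_\alpha|\le|b_\alpha|$).
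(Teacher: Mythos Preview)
Your proposal is correct and follows essentially the same route as the paper. Both arguments compute $c'=\lambda^{-p_\beta}(c+\lambda^{q}b_\alpha-b_\beta)$, bound $|b_\alpha|,|b_\beta|\le \max_i|b_i|/(1-\lambda)$ (so the perturbation is at most $(1-\lambda)d^{\ast}$), and use $p_\beta\ge 1$ as the worst case; the paper packages this via the elementary observation that $|ax+b|\ge|x|$ whenever $a>1$, $|b|\le t$, $|x|\ge t/(a-1)$, and then notes the resulting threshold is decreasing in $p_\beta$, whereas you invoke $\lambda^{-p_\beta}\ge\lambda^{-1}$ directly---these are the same computation rearranged.
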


\begin{proof}
Suppose $a>1$ and $|b|\leq t.$ If $|x|\geq \frac{t}{a-1}$, then
\begin{equation}
|ax+b|\geq a|x|-t\geq |x|.  \label{aa}
\end{equation}

Note that
\begin{equation*}
c^{\prime }=\lambda ^{-p_{\beta }}c+\lambda ^{-p_{\beta }}(\lambda
^{q}b_{\alpha }-b_{\beta }).
\end{equation*}%
with $|b_{\alpha }|,|b_{\beta }|\leq \frac{\max_{i}|b_{i}|}{1-\lambda }.$
Applying $a=\lambda ^{-p_{\beta }}$, $b=\lambda ^{-p_{\beta }}(\lambda
^{q}b_{\alpha }-b_{\beta })$ and $t=2\lambda ^{-p_{\beta }}\frac{%
\max_{i}|b_{i}|}{1-\lambda },$ we notice that
\begin{equation*}
\text{if }|c|\geq \frac{2\lambda ^{-p_{\beta }}}{\lambda ^{-p_{\beta }}-1}%
\left( \frac{\max_{i}|b_{i}|}{1-\lambda }\right) ,\text{ then }|c^{\prime
}|\geq |c|.
\end{equation*}%
Since the function $\frac{2\lambda ^{-x}}{\lambda ^{-x}-1}$ is decreasing
and $p_{\beta }\geq 1,$ we take $d^{\ast }$ for $p_{\beta }=1$ and complete
the proof.
\end{proof}

Notice that $\mathbf{id}=h_{(0,0)},$ we have the following corollary.

\begin{corollary}
Suppose $S_{\sigma }=S_{\tau }$ and $\{(q_{k},c_{k})\}_{k}$ has recursive
structure w.r.t. $(\{\alpha _{k}\}_{k=0}^{M},\{\beta _{k}\}_{k=0}^{M})$ as
above. Then $|c_{k}|<d^{\ast }$ for all $k.$
\end{corollary}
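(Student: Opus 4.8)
The plan is to combine the preceding lemma with the alternating-sign structure of the exponents $q_k$ to get an a priori bound on $|c_k|$ that holds \emph{along the whole finite sequence}, not merely at the extremes. First I would observe that, by construction of the recursive structure attached to $S_\sigma=S_\tau$, we always have $q_k\geq 0$ (this is exactly \eqref{positive}/\eqref{gg}), so each step of the recursion is of the form $(q_{k+1},c_{k+1})=(p_{\beta_k},b_{\beta_k})^{-1}(q_k,c_k)(p_{\alpha_k},b_{\alpha_k})$ with $q_k\geq 0$, which is precisely the hypothesis of the Lemma. Hence the Lemma applies at every index $k$: whenever $|c_k|\geq d^{\ast}$, then $|c_{k+1}|\geq|c_k|$.

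The key step is then a monotonicity/maximum-principle argument. Suppose for contradiction that $|c_{k_0}|\geq d^{\ast}$ for some index $k_0$ (with $1\le k_0\le M+1$). Applying the Lemma repeatedly, the sequence $|c_{k_0}|\le|c_{k_0+1}|\le\cdots$ is nondecreasing, so in particular $|c_{M+1}|\geq|c_{k_0}|\geq d^{\ast}>0$. But $S_\sigma=S_\tau$ forces $(q_{M+1},c_{M+1})=(p_\tau,b_\tau)^{-1}(p_\sigma,b_\sigma)=(0,0)$, i.e.\ $c_{M+1}=0$, a contradiction. (One must be slightly careful when $\tau_M\neq\tau$: the excerpt already arranges to replace $\tau_M$ by $\tau$ so that $\sigma_M=\sigma$ and $\tau_M=\tau$, hence $q_{M+1}=p_\sigma-p_\tau=0$ and $c_{M+1}=0$ genuinely hold; I would spell this out.) This shows no index can reach the threshold $d^{\ast}$, i.e.\ $|c_k|<d^{\ast}$ for all $k$, which is the claim.

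I expect the only real obstacle to be bookkeeping rather than mathematics: making sure that the Lemma's hypothesis ``$q\geq 0$'' is met at \emph{every} step, which is guaranteed by the alternating-sign construction giving $0\le p_{\sigma_k}-p_{\tau_k}\le T-1$ in \eqref{gg}, and verifying the endpoint identity $c_{M+1}=0$ in both cases $\tau_M=\tau$ and $\tau_M\neq\tau$. Once those two points are checked, the corollary is immediate by the backward-propagation argument above; no computation beyond invoking the Lemma is needed.
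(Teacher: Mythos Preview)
Your argument is correct and is precisely the one the paper has in mind: the paper's entire proof is the single remark ``Notice that $\mathbf{id}=h_{(0,0)}$,'' which encodes exactly your forward-monotonicity contradiction (once $|c_k|\ge d^{\ast}$ the Lemma forces $|c_{k+1}|\ge|c_k|$, while the terminal value must be $0$ because $S_\sigma=S_\tau$). Your added care about the hypothesis $q_k\ge 0$ from \eqref{positive}/\eqref{gg} and about the endpoint case $\tau_M\ne\tau$ just makes explicit what the paper leaves to the reader.
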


\subsection{Graph and criterion}

$\ $

Let
\begin{equation*}
\Theta =\{n\in \mathbb{Z}:0\leq n\leq T-1\}\times \mathcal{A}
\end{equation*}%
where
\begin{equation*}
\mathcal{A}=\{x:|x|<\frac{2\max_{i}|b_{i}|}{(1-\lambda )^{2}},\text{ }%
\lambda ^{-T+2}x=R(\lambda ^{-1})\text{ with }R\in \mathcal{Z}-\mathcal{Z}\}.
\end{equation*}

\begin{remark}
When $T\geq 3$, $(1+\lambda ^{-T+1})\frac{\max_{i}|b_{i}|}{1-\lambda }\geq
\frac{2\max_{i}|b_{i}|}{(1-\lambda )^{2}}.$ When $T\ $is large enough,
\begin{equation*}
(1+\lambda ^{-T+1})\frac{\max_{i}|b_{i}|}{1-\lambda }\gg \frac{%
2\max_{i}|b_{i}|}{(1-\lambda )^{2}}.
\end{equation*}%
Then $\#\mathcal{A}$ seems to be much less than $\#\mathcal{C}$.
\end{remark}

Given two vertices $(q,c)$, $(q^{\prime },c^{\prime })\in \Theta ,$ we draw
a directed edge from $(q,c)$ and $(q^{\prime },c^{\prime }),$ if and only if
there are words $\alpha ,\beta $ with $1\leq p_{\alpha _{k}},p_{\beta
_{k}}\leq 2T-1$ such that
\begin{equation*}
(q^{\prime },c^{\prime })=(p_{\beta },b_{\beta })^{-1}(q,c)(p_{\alpha
},b_{\alpha })
\end{equation*}
and $q,q-p_{\beta },q^{\prime }$ have alternative signs.

Then we obtain a graph $G^{\ast }$ with vertex set $\Theta $ and edge set
defined above$.$ Let
\begin{equation*}
\Psi =\{(p_{i},b_{i})^{-1}(p_{j},b_{j}):p_{j}\geq p_{i},\text{ }i\neq j\text{
and }\lambda ^{-p_{i}}|b_{j}-b_{i}|<\frac{2\max_{i}|b_{i}|}{(1-\lambda )^{2}}%
\text{ }\}.
\end{equation*}%
Then $\Psi \subset \Theta .$ We have the following criterion.

\begin{theorem}
The \textbf{OSC} fails if and only if there is a directed path in $G^{\ast }$
starting at a point of $\Psi $ and ending at $(0,0).$
\end{theorem}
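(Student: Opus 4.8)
The plan is to mirror the structure already established for the coarse graph $G$, but now using the refined recursive structure developed in this section, where we arrange that the partial sums $p_{\sigma_k}-p_{\tau_k}$ have alternating signs and stay in $[0,T-1]$. The key point is that, by Lemma \ref{lem:OSC}, the \textbf{OSC} fails if and only if $S_\sigma = S_\tau$ for some distinct $\sigma,\tau \in \Sigma^\ast$. So I must show: such a coincidence exists if and only if there is a finite directed path in $G^\ast$ from a vertex of $\Psi$ to $(0,0)$.

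For the forward direction, suppose $S_\sigma = S_\tau$ with $\sigma,\tau$ distinct; without loss of generality $p_{i_1}\ge p_{j_1}$. I would run the construction at the start of this section to obtain $\{\alpha_k\}_{k=0}^M$, $\{\beta_k\}_{k=0}^M$ with $\alpha_0=i_1\ne j_1=\beta_0$, the alternating-sign property, and $1\le p_{\alpha_k},p_{\beta_k}\le 2T-1$. Let $\{(q_k,c_k)\}_k$ be the associated recursive structure. By (\ref{gg}) and Lemma \ref{lem:pir} we get $0\le q_k\le T-1$ and $c_k=\lambda^{T-2}R(\lambda^{-1})$ with $R\in\mathcal{Z}-\mathcal{Z}$; by the Corollary just proved, $|c_k|<d^\ast = \frac{2\max_i|b_i|}{(1-\lambda)^2}$. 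Hence every $(q_k,c_k)\in\Theta$. The first vertex $(q_1,c_1)=(p_{\beta_0},b_{\beta_0})^{-1}(p_{\alpha_0},b_{\alpha_0}) = (p_{i_1}-p_{j_1},\lambda^{-p_{j_1}}(b_{i_1}-b_{j_1}))$, which together with $|c_1|<d^\ast$ and $p_{i_1}\ge p_{j_1}$ places it in $\Psi$. Consecutive vertices are joined by edges of $G^\ast$ by the recursive relation $(q_{k+1},c_{k+1})=(p_{\beta_k},b_{\beta_k})^{-1}(q_k,c_k)(p_{\alpha_k},b_{\alpha_k})$ with the alternating-sign condition on $q_k,q_k-p_{\beta_k},q_{k+1}$, which is exactly how edges in $G^\ast$ were defined. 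Finally $S_\sigma=S_\tau$ forces $p_\sigma=p_\tau$ and $b_\sigma=b_\tau$, so $(q_{M+1},c_{M+1})=(p_\tau,b_\tau)^{-1}(p_\sigma,b_\sigma)=(0,0)$. This gives the required path from a point of $\Psi$ to $(0,0)$.

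For the converse, suppose there is a finite directed path in $G^\ast$ from some $(p_{i},b_{i})^{-1}(p_j,b_j)\in\Psi$ to $(0,0)$, of length $M$. Reading off the edges gives words $\alpha_k,\beta_k$ ($0\le k\le M$, with $|\alpha_0|=|\beta_0|=1$, $\alpha_0\ne\beta_0$) whose associated recursive structure $\{(q_k,c_k)\}$ is exactly the vertex sequence, ending at $(q_{M+1},c_{M+1})=(0,0)$. Set $\sigma=\alpha_0\alpha_1\cdots\alpha_M$ and $\tau=\beta_0\beta_1\cdots\beta_M$. Unwinding the recursion telescopes to $(p_\tau,b_\tau)^{-1}(p_\sigma,b_\sigma)=(q_{M+1},c_{M+1})=(0,0)$, i.e. $S_\sigma=S_\tau$; and $\sigma,\tau$ are distinct since their first letters differ. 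By Lemma \ref{lem:OSC} the \textbf{OSC} fails.

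The main obstacle — and the only genuinely new content beyond bookkeeping — is the forward direction's verification that the refined construction indeed keeps every $c_k$ inside the smaller ball of radius $d^\ast$ rather than merely inside the crude bound $(1+\lambda^{-T+1})\frac{\max_i|b_i|}{1-\lambda}$. That is handled by the Lemma and Corollary above: the alternating-sign arrangement guarantees $q_k\ge 0$ at each step, which is precisely the hypothesis needed to invoke the inequality (\ref{aa})-type estimate showing $|c_k|\ge d^\ast$ is never first exceeded (since crossing the threshold would make $|c_{k+1}|\ge|c_k|$, contradicting that the sequence returns to $0$). One should also double-check that the degenerate situation where $E$ is a singleton is covered; but the same reduction as in the Remark after Lemma \ref{lem:OSC} applies verbatim, producing an explicit coincidence $S_\sigma=S_\tau$ with all intermediate data trivially in $\Theta$, so no separate argument is needed.
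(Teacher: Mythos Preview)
Your proposal is correct and follows essentially the same approach as the paper. The paper does not spell out a proof of this theorem, but the machinery it assembles in Section~\ref{sec:OSC}---the alternating-sign recursive construction, the Lemma bounding $|c'|\ge|c|$ once $|c|\ge d^\ast$, and the Corollary forcing $|c_k|<d^\ast$---is precisely what you invoke for the forward direction, and your converse is the same telescoping argument that underlies Proposition~\ref{Th:OSC1}; modulo the paper's own minor indexing ambiguities (whether $\sigma=\sigma_M$ or $\sigma=\sigma_{M+1}$), your assembly of these pieces is exactly what the authors intend.
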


\medskip

\section{Examples}

\begin{example}
Let $\lambda =\frac{1}{3}.$ For given $n\in \mathbb{N}$, we consider the
self-similar set $E_{\mathbf{p},\mathbf{b}}$ with
\begin{equation*}
\mathbf{p}=(1,1,1),\text{\ }\mathbf{b}=(0,\frac{2}{3^{n+1}},\frac{2}{3}).
\end{equation*}%
Notice that $\lambda ^{-1}=3$ is a P.V. number. By Remark $\ref{R:a},$ take $a=3^{n+1}$ and replace $(0,%
\frac{2}{3^{n+1}},\frac{2}{3})$ by $\mathbf{b}=(0,2,2\cdot3^n).$ Then
\begin{equation*}
B=\{0,2,2\cdot3^n\}
\end{equation*}
and
\begin{equation*}
B-B=\{-2\cdot3^n,2(1-3^n),-2,0,2,2(3^n-1),2\cdot3^n\}.
\end{equation*}

Since $T=1,$ by the discussion of Subsection $\ref{subsec:T=1},$ we have
\begin{equation*}
\mathcal{C}^{\prime}=\{m: m \text{ \ is \ even \ and \ } m\in [-3^{n+1},3^{n+1}]\cap \mathbb{Z}\}
\end{equation*}
and
$\mathcal{C}^{\prime} \cap \{\lambda ^{-1}(b_{i}-b_{j})\}_{i\neq j}=\{-6,6\},$ i.e.,
\begin{equation*}
\Lambda =\{(0,-6),(0,6)\}.
\end{equation*}
Considering the vertices $\{0\}\times \mathcal{C}^{\prime}$ and using step $3$
and step $4$ of algorithm, we can obtain a path
\begin{equation*}
(0,6)\rightarrow
(0,2\cdot3^2)\rightarrow(0,2\cdot3^3)\rightarrow\cdots\rightarrow(0,2%
\cdot3^n)\rightarrow(0,0)\rightarrow(0,0)\rightarrow(0,0)\cdots
\end{equation*}
 starting from $(0,6)$.
Then neither \textbf{OSC}
nor \textbf{SSC} is fulfilled. And thus $\dim_H E_{\mathbf{p},\mathbf{b}}\neq\dim_S E_{\mathbf{p},\mathbf{b}}=1$ and \emph{int}$(E_{\mathbf{p},\mathbf{b}})=\varnothing$
due to Corollarys $\ref{weishu}$ and $\ref{int}.$

\bigskip
\end{example}

\begin{example}
Let $\lambda =\frac{1}{3}.$ We consider the self-similar set $E_{\mathbf{p},%
\mathbf{b}}$ with
\begin{equation*}
\mathbf{p}=(1,2,1),\text{\ }\mathbf{b}=(0,\frac{11}{18},\frac{2}{3}).
\end{equation*}%
Take $a=18\ $and replace $(0,\frac{11}{18},\frac{2}{3})$ by $\mathbf{b}%
=(0,11,12).$ Then
\begin{equation*}
B=\{0,11,12\}
\end{equation*}
and
\begin{equation*}
B-B=\{-12,-11,-1,0,1,11,12\}.
\end{equation*}

Notice that $T=2$. Then we have, by (\ref{d}),
\begin{equation*}
d=\lambda^{-T+2}\frac{(1+\lambda^{-T+1})\max_{b\in B}|b|}{1-\lambda }=72,
\end{equation*}%
Using step $2$ of algorithm, we have
\begin{equation*}
\mathcal{C}=[-72,72]\cap \mathbb{Z},
\end{equation*}
and
\begin{equation*}
\Xi=\{-1,0,1\}\times\mathcal{C}.
\end{equation*}
By $(\ref{da-lambda})$ we also have
\begin{equation*}
\Lambda =\{(-1,9),(0,-36),(0,36),(1,-3),(1,33)\}.
\end{equation*}
Using step $3$ and step $4$ of algorithm, we find that there exists no a
directed path starting from any element in $\Lambda$ and ending at $(0,0)$.
Hence, $E_{\mathbf{p}, \mathbf{b}}$ satisfies \textbf{OSC}.

However, we can
obtain an infinite directed path
\begin{equation*}
(-1,9)\rightarrow
(-1,-9)\rightarrow(-1,-27)\rightarrow(0,18)\rightarrow(0,18)%
\rightarrow(0,18)\cdots
\end{equation*}
starting from $(-1,9)\in\Lambda.$  Then \textbf{SSC} fails.

\bigskip
\end{example}

\begin{example}
Let $\lambda =\sqrt2-1.$ Then $\lambda^{-1} =\sqrt2+1$ is a P.V. number
called the silver ratio. We consider the self-similar set $E_{\mathbf{p},%
\mathbf{b}}$ with
\begin{equation*}
\mathbf{p}=(1,2,1),\text{\ }\mathbf{b}=(0,\frac{2}{5},\frac{1}{2}).
\end{equation*}%
Take $a=10\ $and replace $(0,\frac{2}{5},\frac{1}{2})$ by $\mathbf{b}%
=(0,4,5).$ Then
$$
B=\{0,4,5\}
$$
and
$
B-B=\{-5,-4,-1,0,1,4,5\}.
$

Notice that $T=2$. Then by $(\ref{d})$ we have
\begin{equation*}
d=\lambda^{-T+2}\frac{(1+\lambda^{-T+1})\max_{b\in B}|b|}{1-\lambda }=15+10%
\sqrt{2},
\end{equation*}%
And we can obtain, using step $2$ of algorithm, the set $\Xi$ containing $ 1059$
elements which will not be listed here. By $(\ref{da-lambda})$ we also have
\begin{eqnarray*}
\Lambda &=& \{(1,4+4\sqrt2),(0,5+5\sqrt2),(-1,-12-8\sqrt2), \\
\ &\ & (-1,3+2\sqrt2),(0,-5-5\sqrt2),(1,-1-\sqrt2)\}.
\end{eqnarray*}

In virtue of Dijkstra's method, using step $3$ and step $4$ of
algorithm, we can obtain a path
\begin{equation*}
(-1,3+2\sqrt2)\rightarrow
(-1,-9-6\sqrt2)\rightarrow(-1,-6-5\sqrt2)\rightarrow(0,0)\rightarrow(0,0)%
\rightarrow(0,0)\cdots,
\end{equation*}
starting from $%
(-1,3+2\sqrt2)\in\Lambda,$ which implies that neither \textbf{OSC} nor \textbf{SSC} is fulfilled. Using $\dim_S (E_{\mathbf{p},\mathbf{b}})=1,$
we have \emph{int}$(E_{\mathbf{p},\mathbf{b}})=\varnothing$ due to Corollary $\ref{int}.$

\bigskip
\end{example}

%
%
%


\begin{thebibliography}{99}
\bibitem{B} C. Bandt, S. Graf, Self-similar sets VII: A characterization of
self-similar fractals with positive Hausdorff measure, Proc. Amer. Math.
Soc., 114, (1992), no. 4, 995--1001.

\bibitem{CP} D. Cooper, T. Pignataro, On the shape of Cantor sets, J.
Differential Geom., 28, (1988), 203--221.

\bibitem{C} T. H. Cormen, C. E. Leiserson, R. L. Rivest, C. Stein,
Introduction to algorithms, MIT Press, Cambridge, MA, 2009.

\bibitem{DS} G. David, S. Semmes, Fractured fractals and broken dreams:
self-similar geometry through metric and measure, Oxford Univ. Press, New York, 1997.

\bibitem{D} E W. A. Dijkstra, note on two problems in connexion with graphs,
Numer. Math., 1, (1959), 269--271.

\bibitem{D1} S. Dube, Undecidable problems in fractal geometry, Complex
Systems, 7, (1993), no. 6, 423--444.

\bibitem{D2} S. Dube, Fractal geometry, Turing machines and
divide-and-conquer recurrences, RAIRO Inform. Th\'{e}or. Appl., 28, (1994),
no. 3-4, 405--423.

\bibitem{DE} M. Das, G. A. Edgar, Finite type, open set conditions and weak
separation conditions, Nonlinearity, 24, (2011), no. 9, 2489--2503.

\bibitem{Falc088} K. J. Falconer, The Hausdorff dimension of self-affine
fractals, Math. Proc. Cambridge Philos. Soc., 103, (1988), no. 2, 339--350.

\bibitem{Falco97} K. J. Falconer, Fractal geometry: mathematical foundations
and applications, John Wiley \& Sons Ltd., Chichester, 1990.

\bibitem{FM} K. J. Falconer, D. T. Marsh, On the Lipschitz equivalence of
Cantor sets, Mathematika, 39, (1992), 223--233.

\bibitem{FL} D. J. Feng, K. S. Lau, Multifractal formalism for self-similar
measures with weak separation condition, J. Math. Pures Appl., 92, (2009),
no. 4, 407--428.

\bibitem{F} M. L. Fredman, R. E. Tarjan, Fibonacci heaps and their uses in
improved network optimization algorithms. J. Assoc. Comput. Mach., 34,
(1987), no. 3, 596--615.

\bibitem{G} A. M. Garsia, Arithmetic properties of Bernoulli convolutions,
Trans. Amer. Math. Soc., 102, (1962), 409--432.

\bibitem{Guo} Q. L. Guo, H. Li, Q. Wang, L. F. Xi, Lipschitz equivalence of
a class of self-similar sets with complete overlaps, Ann. Acad. Sci. Fenn.
Math., 37, (2012), no. 1, 229--243.

\bibitem{Ho} M. Hochman, On self-similar sets with overlaps and inverse
theorems for entropy, Ann. of Math., in press.

\bibitem{Hu} J. E. Hutchinson, Fractals and self similarity, Indiana Univ.
Math. J., 30, (1981), 713--747.

\bibitem{Keane} M. Keane, M. Smorodinsky, B. Solomyak, On the morphology of $%
\gamma$-expansions with deleted digits, Trans. Amer. Math. Soc., 347,
(1995), no. 3, 955--966.

\bibitem{K} R. Kenyon, Projecting the one-dimensional Sierpinski gasket,
Israel J. Math., 97, (1997), 221--238.

\bibitem{Kr} J. B. Kruskal, On the shortest spanning subtree of a graph and
the traveling salesman problem, Proc. Amer. Math. Soc., 7, (1956), 48--50.

\bibitem{L} K. S. Lau, S. M. Ngai, Multifractal measures and a weak
separation condition, Adv. Math., 141, (1999), no. 1, 45--96.

\bibitem{LN2} K. S. Lau, S. M. Ngai, A generalized finite type condition for
iterated function systems, Adv. Math., 208, (2007), no. 2, 647--671.

\bibitem{LNW} K. S. Lau, S. M. Ngai, X. Y. Wang, Separation conditions for
conformal iterated function systems, Monatsh. Math., 156, (2009), no. 4,
325--355.

\bibitem{LNR} K. S. Lau, S. M. Ngai, H. Rao, Iterated function systems with
overlaps and self-similar measures, J. London Math. Soc., 63, (2001), no. 1,
99--116.

\bibitem{LLAD} J. J. Luo, K. S. Lau, Lipschitz equivalence of self-similar
sets and hyperbolic boundaries, Adv. Math., 235, (2013), 555--579.

\bibitem{M} P. Mattila, Geometry of sets and measures in Euclidean spaces,
Cambridge Univ. Press, Cambridge, 1995.

\bibitem{MW} R. D. Mauldin and S. C. Williams, Hausdorff dimension in graph
directed constructions, Trans. Amer. Math. Soc., 309, (1988), nos. 1-2,
811--839.

\bibitem{Moran} P. A. P. Moran, Additive functions of intervals and
Hausdorff measure, Proc. Cambridge Philos. Soc., 42, (1946), 15--23.

\bibitem{N} N. Nguyen, Iterated function systems of finite type and the weak
separation property, Proc. Amer. Math. Soc., 130, (2002), no. 2, 483--487.

\bibitem{NW} S. M. Ngai and Y. Wang, Hausdorff dimension of self-similar
sets with overlaps, J. Lond. Math. Soc., 63, (2001), 655--72.

\bibitem{PS} M. Pollicott, K. Simon, The Hausdorff dimension of $\gamma $%
-expansions with deleted digits, Trans. Amer. Math. Soc., 347, (1995), no.
3, 967--983.

\bibitem{RRW} H. Rao, H. J. Ruan, Y. Wang, Lipschitz equivalence of Cantor
sets and algebraic properties of contraction ratios, Trans. Amer. Math.
Soc., 364, (2012), 1109--1126.

\bibitem{RRX} H. Rao, H. J. Ruan, L. F. Xi, Lipschitz equivalence of
self-similar sets, C. R. Acad. Sci. Paris, Ser. I, 342, (2006), 191-196.

\bibitem{RW} H. Rao, Z. Y. Wen, A class of self-similar fractals with
overlap structure, Adv. in Appl. Math., 20, (1998), no. 1, 50--72.

\bibitem{RWX} H. J. Ruan, Y. Wang, L. F. Xi, Lipschitz equivalence of
self-similar sets with touching structures, Nonlinearity, in press.

\bibitem{S} A. Schief, Separation properties for self-similar sets, Proc.
Amer. Math. Soc., 122, (1994), no. 1, 111-115.

\bibitem{Simon} K. Simon, B. Solomyak, On the dimension of self-similar
sets, Fractals, 10, (2002), no. 1, 59--65.

\bibitem{Si} M. Sipser, Introduction to the theory of computation, Thomson
Course Technology, Boston, 2005.

\bibitem{So} B. Solomyak, On the random series $\sum\pm\lambda^{n}$ (an Erd%
\H{o}s problem), Ann. of Math., 142, (1995), no. 3, 611--625.

\bibitem{SV} G. \'{S}wi\c{a}tek, J. J. P. Veerman, \newblock On a conjecture
of Furstenberg, Israel J. Math., 130, (2002), 145--155.

\bibitem{WX} Q. Wang, L. F. Xi, K. Zhang, Self-similar fractals: An
algorithmic point of view, Sceicen China A: Mathematics, 57, (2014), no. 4,
755--766.

\bibitem{W} Z. Y. Wen, Mathematical foundations of fractal geometry,
Shanghai Scientific and Technological Education Publishing House, Shanghai, 2000.

\bibitem{X0} L. F. Xi, Lipschitz equivalence of self-conformal sets, J. Lond.
Math. Soc., 70, (2004), no. 2, 369--382.

\bibitem{X} L. F. Xi, Lipschitz equivalence of dust-like self-similar sets,
Math. Z., 266, (2010), no. 3, 683--691.

\bibitem{XR} L. F. Xi, H. J. Ruan, Lipschitz equivalence of generalized
$\{1,3,5\}-\{1,4,5\}$ self-similar sets, Sci. China Ser. A, 50, (2007),
1537--1551.

\bibitem{XX0} L. F. Xi, Y. Xiong, Self-similar sets with initial cubic
patterns, C. R. Acad. Sci. Paris, Ser. I, 348, (2010), 15--20.

\bibitem{XX2} L. F. Xi, Y. Xiong, Lipschitz equivalence of fractals
generated by nested cubes, Math. Z., 271, (2012), no. 3, 1287--1308.

\bibitem{XX} L. F. Xi, X. Xiong, Rigidity theorem in fractal geometry,
arXiv.org:1308.3143.

\bibitem{XX1} L. F. Xi, Y. Xiong, Lipschitz equivalence class, ideal class
and the Gauss class number problem, arXiv.org:1304.0103.

\bibitem{Z} M. P. W. Zerner, Weak separation properties for self-similar
sets, Proc. Amer. Math. Soc., 124, (1996), no. 11, 3529--3539.

\bibitem{Zhu} Z. Y. Zhu, Y. Xiong, L. F. Xi, Lipschitz equivalence of
self-similar sets with triangular pattern, Sci. China: Math., 54, (2011),
no. 12, 2573--2582.
\end{thebibliography}
\end{document}